\setlist{  
  listparindent=\parindent,
  parsep=0pt,
}
\theoremstyle{plain}
\newtheorem{thm}{Theorem}[section]
\newtheorem{prop}[thm]{Proposition}
\newtheorem{lemma}[thm]{Lemma}
\newtheorem{cor}[thm]{Corollary}
\theoremstyle{definition}
\newtheorem{mydef}[thm]{Definition}
\newtheorem{remark}[thm]{Remark}
\newtheorem*{sum-invol}{Summary of result 1 - $\mathcal{H}_{n}$ is involution}
\newtheorem*{sum-GP_ham}{Summary of result 2 - GP Hamiltonian flows}
\numberwithin{equation}{section} 
\newcommand{\p}{{\partial}}
\newcommand{\R}{{\mathbb{R}}}
\newcommand{\C}{{\mathbb{C}}}
\newcommand{\N}{{\mathbb{N}}}
\newcommand{\Z}{{\mathbb{Z}}}
\newcommand{\T}{{\mathbb{T}}}
\newcommand{\Sc}{{\mathcal{S}}}
\newcommand{\A}{{\mathcal{A}}}
\newcommand{\ol}{\overline}
\renewcommand{\Re}{\;\mathrm{Re}\;}
\newcommand{\E}{{\mathbf{E}}}
\def\XXint#1#2#3{{\setbox0=\hbox{$#1{#2#3}{\int}$ }
\vcenter{\hbox{$#2#3$ }}\kern-.6\wd0}}
\let\oldtocsection=\tocsection
\let\oldtocsubsection=\tocsubsection
\let\oldtocsubsubsection=\tocsubsubsection
\renewcommand{\tocsection}[2]{\hspace{0em}\oldtocsection{#1}{#2}}
\renewcommand{\tocsubsection}[2]{\hspace{1em}\oldtocsubsection{#1}{#2}}
\renewcommand{\tocsubsubsection}[2]{\hspace{2em}\oldtocsubsubsection{#1}{#2}}
\title[Invariant measures for complex valued mKdV]{Well-posedness and invariant measures for complex valued modified KdV equation}
\author[Kenig]{Carlos E. Kenig$^1$}
\address{$^1$  Department of Mathematics\\
University of Chicago\\
Eckhart Hall, 5734 S University Ave\\
Chicago, IL 60637}
\email{cek@uchicago.edu}
\thanks{$^1$ The first author was funded in part by NSF DMS-2153794 and DMS-2052710.}
\author[Nahmod]{Andrea R. Nahmod$^2$}
\address{$^2$  
Department of Mathematics \\ University of Massachusetts\\  710 N. Pleasant Street, Amherst MA 01003}
\email{nahmod@umass.edu}
\thanks{$^2$ The second author was funded in part by NSF DMS-2052740, NSF-DMS 2400036 and the Simons Foundation 
Collaboration Grant on Wave Turbulence (Nahmod’s award ID 651469).}
\author[Pavlovi\'c]{Nata\v{s}a Pavlovi\'c$^3$}
\address{$^3$  
Department of Mathematics\\ 
University of Texas at Austin\\ 
2515 Speedway, Stop C1200\\
Austin, TX 78712}
\email{natasa@math.utexas.edu}
\thanks{$^3$ The third author was funded in part by NSF DMS-1840314, DMS-2009549 and DMS-2052789.}
\author[Staffilani]{Gigliola Staffilani$^4$}
\address{$^4$ Department of Mathematics\\
Massachusetts Institute of Technology\\ 
77 Massachusetts Avenue,  Cambridge, MA 02139}
\email{gigliola@math.mit.edu}
\thanks{$^4$ The fourth  author was funded in part by NSF DMS-2052651, 
DMS-2306378 and the Simons Foundation
Collaboration Grant on Wave Turbulence.}
\author[Visciglia]{Nicola Visciglia$^5$}
\address{$^5$ Dipartimento di Matematica\\Universit\`a di Pisa\\
Largo Bruno Pontecorvo, 5, 56127,Italy}
\email{nicola.visciglia@unipi.it}
\thanks{$^5$ The fifth  author was funded in part by  PRIN 2020XB3EFL}
\begin{document}

\maketitle

\begin{abstract} We consider the one dimensional periodic complex valued mKdV, which corresponds to the first equation above cubic NLS
in the associated integrable hierarchy. Our main result is the construction of a sequence of invariant measures supported on Sobolev spaces with increasing regularity.
The fact that we work with complex valued functions makes the analysis of the invariance much harder compared to the real valued case, that can be handled instead
following the ideas used by  Zhidkov \cite{ZhLN}.  
\end{abstract}

\tableofcontents

\section{Introduction}

\subsection{Our results in a nutshell}

As an integrable system the one dimensional periodic cubic nonlinear Schr\"odinger  equation (NLS) 
\begin{equation}\label{NLS}
 i\partial_t u +\partial_x^2 u= 2 |u|^2 u, \quad (t,x)\in   \mathbb R \times\T , \quad u(t,x)\in \mathbb{C},
\end{equation}
where $\T=\R/(2\pi \Z)$, admits an infinite list  of conserved integrals. For simplicity we have chosen the defocusing nonlinearity, however all the results
of this paper extend to the focusing case, after minor changes. 
Following \cite{FaTa2007}, one can define inductively a sequence of  conservation laws satisfied by solutions to \eqref{NLS} as follows: 
\begin{equation} \label{in-In}
E_{n}(u) := \int_{\T}\bar u w_{n}[u] dx, \quad \forall n\in \N
\end{equation}
where the sequence of operators  $\{w_n\}_{n=1}^{\infty}$ is such that $w_{n}:\Sc(\T) \rightarrow \Sc(\T)$ and 
\begin{align*} 
w_{1}[u] & := u \\
w_{n+1}[u] & := -i\p_{x}w_{n}[u] + \bar{u}\sum_{k=1}^{n-1}w_{k}[u]w_{n-k}[u].
\end{align*}
Hence, we get for instance for $n=1,2,3,4,5$ the following explicit conservation laws:
\begin{multline}\label{consexpl}
E_1(u) = \int_\T |u|^2 dx, \quad 
E_2(u) = \Im \int _\T\bar u \partial_x u dx, \quad 
E_3(u) =  \int_{\T} \Big(|\partial_{x}u|^2 + |u|^4\Big )dx,\\
E_4(u) = \Im \int_{\T} \Big(\partial_x u \partial_{xx} \bar u+3 |u|^2u \partial_x \bar u\Big)dx, \\
E_5(u) = \int_\T \Big( |\partial_{xx} u|^2 + 6 |\partial_x  u|^2 |u|^2 + |\partial_x (|u|^2)|^2 + 2 |u|^6 \Big)dx.
\end{multline} 
 We also mention that in the paper  \cite{KT18} other conservation laws at lower regularity have been discovered for solutions to \eqref{NLS}
(see also \cite{KVZ18} for results in the same direction). However in this paper we shall focus only on the aforementioned conservation laws $E_n(u), n\in \N$. 
\begin{remark} 
Notice that for $n=1,3,5$ the quadratic term of the conservation law is positive definite. This fact is true in general, namely it is easy to check that
\begin{equation}\label{energybasic}
E_{2n+1}(u)=\|\partial_x^n u\|_{L^2(\T)}^2 + R_n(u), 
\end{equation}
where $R_{n}(u)$ is the integral of a linear combination of densities involving at most $2n-2$ derivatives, with super quadratic homogeneity (see Section \ref{RNU} for more details on the structure of $R_n(u)$). 
\end{remark}

By introducing the symplectic $L^2$ scalar product
\begin{equation}\label{eq:Hstr}
\omega_{L^2}(f,g) = 2\Im\int_{\T}\ol{f(x)}g(x) dx,
\end{equation}
we can consider  for each of the conserved energies $E_n(u)$ the associated Hamiltonian flow:
\begin{equation}\label{nNLS}
\frac{\partial}{\partial t} u(t) = \grad E_n(u(t)), 
\end{equation}
where $\grad $ denotes the symplectic $L^2$ gradient. 
Inspired by \cite{FaTa2007}, we refer to \eqref{nNLS} as the $n$-th nonlinear Schr\"{o}dinger equation, and we shall denote it by $(nNLS)$.
The {\it nonlinear Schr\"{o}dinger hierarchy}, named by Palais \cite{Pa1997}, refers to the whole family of $nNLS$ as $n\in \N$. 
For $n=3$, \eqref{nNLS}
reduces to \eqref{NLS}. It is well-known that the full set of aforementioned energies $E_n(u)$, are conserved by any single equation in the hierarchy.

The notion of NLS hierarchy mentioned above is closely linked to the fact that the one-dimensional cubic NLS equation \eqref{NLS} is an integrable equation. The notion of {\em  integrable equation} can have different meanings, one of which is that the equation can be exactly solved by the inverse scattering methods. For the case of the 1D cubic NLS equation \eqref{NLS}, this notion was formally carried out by Zakharov and Shabat in \cite{ZaSh71} and mathematically addressed in e.g. 
\cite{BeCo1984, BeCo1985, BeCo1987, TeUh1998, Zh1998, Zh1989, DeZh2003}. We also recall \cite{GK}, where the associated action-angle variables are introduced for  
cubic NLS.

This paper is devoted to the analysis, from several viewpoints, of \eqref{nNLS} for $n=4$, namely
the complex-valued  periodic modified Korteweg De Vries (mKdV): 
\begin{equation}\label{mKdV0}
 \partial_t u+\partial_x^3 u= 6|u|^2\partial_x u, \quad (t,x)\in \R\times \T, \quad u(t,x)\in \C.
 \end{equation}
More precisely, the main focus of this article is a probabilistic analysis of the Cauchy problem associated with \eqref{mKdV0}. Our goal is to construct a sequence of invariant measures (which are supported on spaces with increasing regularity) 
each of which is associated to one of infinitely many conservation laws (starting with $E_5$) for the cubic NLS, which in turn are invariant with respect to the flow associated to \eqref{mKdV0} as well. 
This direction is inspired by and generalizes a pioneering work of Zhidkov \cite{ZhLN, Zh2001}, where 
the author considered the Cauchy problem for \eqref{nNLS} for $n=3$, i.e.  the cubic NLS \eqref{NLS}, and the real valued KdV equation  and constructed an infinite sequence of invariant measures for their  flows. 

Although, as far as we know, it is not explicitly written elsewhere, the argument used by  Zhidkov  should extend to the real valued mKdV equation. In our work we are interested in \eqref{mKdV0}, which is the  complex valued mKdV, and in this case the proof of the existence of infinitely many  invariant measures in the spirit of  Zhidkov is more involved and 
requires new ideas, since we consider complex valued solutions (see comments after Theorem \ref{probBBB} for more details on this point).

On the other hand it is well-known that probabilistic analysis typically relies on a deterministic local Cauchy theory associated with \eqref{mKdV0}.
It is worth mentioning that the Cauchy problem associated with \eqref{mKdV0} has been studied in \cite{B1993}, \cite{Ch21}  and \cite{Ch23}, 
for initial data up to regularity $H^\frac 12(\T)$ as well as in the Fourier-Lebesgue spaces. 
However we present our proof  of local well-posedness, at regularity above $H^\frac 12(\T)$, which is enough for our purposes.
We have decided to include our analysis of the deterministic Cauchy theory for several reasons: \\
First of all, we would like to make this paper self-contained; the second reason is that our argument allows us to obtain, in a relatively elementary way, suitable estimates for solutions to  the Cauchy problem associated to \eqref{mKdV0}, as well as to its  finite dimensional approximation
\eqref{mKdVN}, which are well tailored to applications to  the subsequent probabilistic result; finally, we mention that our proof of well-posedness contains   unconditional uniqueness of strong solutions (see Definition \ref{defsol}  for the definition of strong solution). As far as we know, the unconditional uniqueness of strong solutions for the Cauchy  problem associated with complex-valued mKdV is not proved elsewhere at regularity $s\leq \frac 32$; at regularity $s>\frac 32 $ see \cite{Ka83}, \cite{Ts81}  
and the texts \cite{LP15} and \cite{BS75}. Hence Theorem \ref{Main} below seems to be the first result in this direction, since we obtain unconditional uniqueness of strong solutions up to regularity $s>\frac 43$. 
In this paper we  do not pursue  the issue of unconditional uniqueness of strong solutions at regularity $s\leq \frac 43$,
since it would take us in a different direction than what is the aim of this  paper.\\
\\
The paper has two results: 
\begin{enumerate}
\item[(i)] First we  prove {\it unconditional} well-posedness of  strong  solutions for the initial value problem associated with \eqref{mKdV0}, namely 
\begin{equation}\label{data}
\begin{cases} 
\partial_t u+\partial_x^3 u= 6 |u|^2\partial_x u, \quad (t,x)\in \R\times \T, \quad u(t,x)\in \C\\
u(0)=u_0,
\end{cases}
\end{equation}
for initial data $u_0\in H^s(\T)$, $s>4/3$.  Our argument is based on the strategies developed in \cite{KT03}, \cite{K2004} and \cite{IK2007}. Moreover our result comes with a-priori bounds for  the associated truncated initial value problem \eqref{mKdVN} below, which are uniform in $N$, and which are crucial for the second result of the paper.
\item[(ii)] We show that the local solutions obtained in Theorem \ref{Main} can be extended to global solutions, with a logarithmic upper bound on the growth of the fractional Sobolev norms,
for almost every initial datum with respect  to the Gaussian measure $\mu_{n}$ defined in Subsection \ref{prob}. More importantly we show that a sequence of weighted Gaussian measures, associated with  
the conservation laws $E_{n}(u)$, are invariant along the flow \eqref{data}.
\end{enumerate} 
Precise formulation of these results is presented in Subsections \ref{subsec-wp} and \ref{prob} below.

\subsection{The deterministic result} \label{subsec-wp} 

We are now ready to state our unconditional deterministic local  well-posedness result of strong solutions. 
\begin{thm} \label{Main} Assume $u_0\in H^s(\T)$ for $s>4/3$. Then the Cauchy problem \eqref{data} admits a unique local strong solution
$u\in {\mathcal C}([-T,T],H^s(\T))$, where $T=T(\|u_0\|_{H^s(\T)})>0$.
\end{thm}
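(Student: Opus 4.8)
The plan is to prove Theorem~\ref{Main} by a short-time Fourier restriction argument, blending the short-time energy estimates of Koch--Tzvetkov \cite{KT03} and Kenig \cite{K2004} with the function-space framework of Ionescu--Kenig \cite{IK2007}, carried out \emph{entirely at the regularity level $H^s$}: this is exactly what will produce the a priori bounds for the truncation \eqref{mKdVN} that are uniform in $N$, as well as the unconditional uniqueness. The first move is to expose the good structure of the nonlinearity. From the identity
\[
|u|^2\partial_x u \;=\; \tfrac12\,\partial_x\!\left(|u|^2 u\right)\;-\;\tfrac12\,u^2\,\partial_x\bar u ,
\]
equation \eqref{data} reads $\partial_t u+\partial_x^3 u = 3\,\partial_x\!\left(|u|^2 u\right)-3\,u^2\partial_x\bar u$, separating a harmless divergence-form cubic term from a genuine derivative cubic term; inserting the Fourier truncations $\Pi_N$, the same decomposition applies to \eqref{mKdVN}, so that every estimate below runs verbatim on the truncation with constants independent of $N$.

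Next I would fix the analytic framework: a small time length $\delta\le 1$ (of fixed size, or frequency-dependent of size $\sim N^{-\alpha}$ on the $N$-th Littlewood--Paley block for a suitable $\alpha\in(0,1)$), and, out of dyadic pieces, the short-time Bourgain space $F^s(\delta)$ adapted to the Airy propagator $e^{-t\partial_x^3}$, the companion space $N^s(\delta)$ for the nonlinearity, and the energy space $E^s(\delta)$ controlling the $H^s$-norm on $[-\delta,\delta]$. Two of the three required estimates are then essentially soft: the \textbf{linear estimate} $\|u\|_{F^s(\delta)}\lesssim\|u\|_{E^s(\delta)}+\|(\partial_t+\partial_x^3)u\|_{N^s(\delta)}$, from Duhamel's formula, and the \textbf{trilinear estimate} $\|\partial_x(|u|^2u)\|_{N^s(\delta)}+\|u^2\partial_x\bar u\|_{N^s(\delta)}\lesssim\delta^{\theta}\|u\|_{F^s(\delta)}^3$ for some $\theta>0$, via a Littlewood--Paley case analysis together with Bourgain's periodic Strichartz bounds for $e^{-t\partial_x^3}$; the conjugate slot $\partial_x\bar u$ does not change the modulation count, so that term is handled exactly as the divergence-form one.

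The crux is the \textbf{energy estimate}. Applying $\langle\partial_x\rangle^s$ to \eqref{data}, pairing against $\overline{\langle\partial_x\rangle^s u}$ and taking real parts, the $\partial_x^3$ contribution drops out by skew-symmetry and one must bound
\[
\frac{d}{dt}\,\|u(t)\|_{H^s}^2 \;=\; 12\,\mathrm{Re}\!\int_{\T}\overline{\langle\partial_x\rangle^s u}\ \langle\partial_x\rangle^s\!\left(|u|^2\partial_x u\right)dx .
\]
The top-order part, in which $\langle\partial_x\rangle^s$ falls entirely on $\partial_x u$, equals --- after one integration by parts --- a constant multiple of
\[
\int_{\T}\partial_x\!\left(|u|^2\right)\bigl|\langle\partial_x\rangle^s u\bigr|^2\,dx ,
\]
a genuine loss of one derivative that a bare energy estimate can absorb only for $s>3/2$ (via $H^s\hookrightarrow W^{1,\infty}$). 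The point is instead to localize this term, and the analogous one produced by $u^2\partial_x\bar u$, in frequency and time, and to estimate the resulting multilinear expressions with the \emph{dispersive} smoothing encoded in $F^s(\delta)$ rather than with the crude Sobolev bound; this buys roughly a further $1/6$ derivative and lowers the threshold to $s>4/3$. The binding configuration is the high-high-low interaction --- two large, nearly opposite input frequencies and a small output --- where the resonance degenerates and the divergence structure (one more integration by parts) supplies the decisive gain. Altogether one obtains $\|u\|_{E^s(\delta)}^2\lesssim\|u_0\|_{H^s}^2+\delta^{\theta}\|u\|_{F^s(\delta)}^4$.

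Finally, the three inequalities combine in the usual continuity/bootstrap argument: with $M(\delta):=\|u\|_{F^s(\delta)}+\|u\|_{E^s(\delta)}$ one gets $M(\delta)^2\lesssim\|u_0\|_{H^s}^2+\delta^{\theta}M(\delta)^4$, which closes on an interval $[-T,T]$ with $T=T(\|u_0\|_{H^s})>0$; run on \eqref{mKdVN} it gives bounds uniform in $N$, and a limiting argument (or a contraction, since the trilinear estimate also controls differences) produces the solution $u\in\mathcal C([-T,T],H^s(\T))$ in the sense of Definition~\ref{defsol}. For \emph{unconditional} uniqueness no auxiliary Bourgain regularity is postulated: a strong solution in $\mathcal C([-T,T],H^s)$ is first shown to belong to $F^s(\delta)$ on short subintervals (so the three estimates apply to it), and then a difference estimate of the same structure for $w=u-v$ --- using the same cancellation as above on the trilinear difference form --- forces $w\equiv 0$ by Gronwall, iterated over $[-T,T]$. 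I expect the main obstacle to be precisely this energy estimate near $s=\tfrac43$: extracting that last fraction of dispersive smoothing from the high-frequency interactions in $\mathrm{Re}\int|u|^2\partial_x u\cdot\overline{\langle\partial_x\rangle^{2s}u}$ and in the non-divergence term $u^2\partial_x\bar u$ once the total-derivative cancellation has been used.
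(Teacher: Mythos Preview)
Your proposal is in the right spirit but takes a heavier route than the paper, and the unconditional uniqueness step as stated is a genuine gap.

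For existence, the paper does \emph{not} set up the short-time $F^s/N^s/E^s$ machinery of Ionescu--Kenig; from \cite{IK2007} it borrows only the Kato--Ponce commutator estimate. The argument runs entirely in the two quantities $\|u(t)\|_{H^s}$ and $\int_0^T\|u(t)\|_{W^{1,\infty}}\,dt$. The energy step (Lemma~\ref{lemma-energy}) is simply
\[
\Bigl|\tfrac{d}{dt}\|u_N(t)\|_{H^s}^2\Bigr|\le C\,\|u_N(t)\|_{H^s}^3\,\|u_N(t)\|_{W^{1,\infty}},
\]
valid for all $s>1/2$, with no frequency decomposition and no dispersive input. The threshold $s>4/3$ enters only in the separate dispersive step (Lemma~\ref{infinity}), which is the Koch--Tzvetkov device \cite{KT03,K2004}: on the $j$-th Littlewood--Paley block, split $[0,T]$ into $\sim 2^j$ subintervals, apply Bourgain's periodic $L^6$ Strichartz bound for $e^{-t\partial_x^3}$ together with Duhamel on each piece, and sum. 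This yields
\[
\int_0^T\|u_N\|_{W^{1,\infty}}\,dt\ \lesssim\ \langle T\rangle T^{5/6}\Bigl(\sup_{[0,T]}\|u_N\|_{H^s}+\|u_N\|_{L^3_TH^s}^3\Bigr),\qquad s>\tfrac43,
\]
and the two inequalities close by a bootstrap (Proposition~\ref{cauchytheory}), uniformly in $N$. Existence then follows by showing $\{\Phi_N(t)u_0\}$ is Cauchy in $\mathcal C_TL^2$, interpolating, passing to the limit in Duhamel, and recovering strong $H^s$-continuity by a direct Bona--Smith-type argument. This is more elementary than your scheme --- no Bourgain-space trilinear estimate is ever proved, and the nonlinearity is never rewritten as $\tfrac12\partial_x(|u|^2u)-\tfrac12 u^2\partial_x\bar u$ --- though it also buys less (no Lipschitz dependence on data, for instance). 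Your $F^s/N^s/E^s$ outline for existence is plausible and would likely work, but it is a genuinely different and more involved construction.

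The gap is in your uniqueness argument. The claim that ``a strong solution in $\mathcal C([-T,T],H^s)$ is first shown to belong to $F^s(\delta)$'' is precisely the step that is typically \emph{not} automatic in short-time Fourier restriction schemes; without an independent proof of it, your difference estimate gives only conditional uniqueness in $F^s\cap \mathcal C_TH^s$. The paper sidesteps this entirely: given any strong solution $u\in\mathcal C_TH^s$, it mollifies $u$, reruns the Strichartz argument of Lemma~\ref{infinity} on the mollified equation, and passes to the limit (via Fatou) to conclude $\partial_x u\in L^1_TL^\infty_x$. Then a direct $L^2$ energy estimate on $u-v$, using one integration by parts and the bound $\|\partial_x u\|_{L^1_TL^\infty}+\|\partial_x v\|_{L^1_TL^\infty}<\infty$, closes by Gronwall on a short subinterval and iterates. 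No auxiliary function space is ever invoked, which is exactly what makes the uniqueness unconditional.
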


One could also prove continuity of the solution mapping with respect to the initial date. We do not do it here because we do not need it, but this can be done following \cite{BS75} and \cite{ABITZ24}. 

Using the conserved energy $E_5(u)$, which allows  uniform control  in time of the $H^2(\T)$ norm of the solution,  we immediately obtain the following corollary.

\begin{cor}\label{COR} The initial value problem \eqref{data} is globally and unconditionally well-posed for strong solutions in  ${\mathcal C}(\R,H^s(\T))$ for $s\geq 2$. Moreover we have an exponential upper bound for  the $H^s(\T)$ norms of solutions.
\end{cor}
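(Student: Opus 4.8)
The plan is to bootstrap the local theory of Theorem \ref{Main} to a global statement using the conservation law $E_5(u)$, which controls the $H^2(\T)$ norm. First I would record that by the Remark following \eqref{consexpl}, one has $E_5(u) = \|\partial_x^2 u\|_{L^2(\T)}^2 + R_2(u)$, where $R_2(u)$ is an integral of densities involving at most two derivatives with superquadratic homogeneity; explicitly, from \eqref{consexpl}, $E_5(u) = \int_\T (|\partial_{xx}u|^2 + 6|\partial_x u|^2|u|^2 + |\partial_x(|u|^2)|^2 + 2|u|^6)\,dx$. Together with the conservation of $E_1(u)=\|u\|_{L^2(\T)}^2$ and $E_3(u)=\int_\T(|\partial_x u|^2+|u|^4)\,dx$ along the flow \eqref{mKdV0}, I would derive an a priori bound: interpolating $\|u\|_{L^\infty}$ and the lower-order terms via Sobolev and Gagliardo--Nirenberg inequalities on $\T$, the lower-order terms in $E_5$ are bounded by a polynomial in $\|u\|_{H^1}$ (which is itself controlled by $E_1$ and $E_3$), so that
\begin{equation*}
\|u(t)\|_{H^2(\T)}^2 \lesssim E_5(u_0) + P\big(E_1(u_0), E_3(u_0)\big)
\end{equation*}
for a polynomial $P$, uniformly in $t$ in the interval of existence. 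The conservation of $E_1, E_3, E_5$ along strong solutions must be justified — for $s\geq 2$ the solution from Theorem \ref{Main} is regular enough that the formal computations showing $\frac{d}{dt}E_k(u(t))=0$ can be carried out rigorously (or by a density/approximation argument using the truncated equation \eqref{mKdVN}); since the paper has this machinery available I would invoke it.

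Next, for initial data $u_0 \in H^s(\T)$ with $s\geq 2$, Theorem \ref{Main} gives a local solution on $[-T_0,T_0]$ with $T_0 = T_0(\|u_0\|_{H^s})$. The standard continuation argument applies: the solution extends as long as $\|u(t)\|_{H^s}$ stays finite, and the local existence time depends only on this norm. For $s=2$ the a priori bound above shows $\|u(t)\|_{H^2}$ cannot blow up in finite time, hence the $H^2$ solution is global. For $2 < s$, I would propagate the higher regularity by a standard energy estimate: differentiating the equation, $\frac{d}{dt}\|u\|_{H^s}^2 \lesssim (1 + \|u\|_{H^2}^2 + \|\partial_x u\|_{L^\infty})\|u\|_{H^s}^2$ after commutator estimates (Kato--Ponce type) applied to the nonlinearity $6|u|^2\partial_x u$; since $\|\partial_x u\|_{L^\infty}\lesssim \|u\|_{H^2}$ by Sobolev embedding on $\T$, and $\|u\|_{H^2}$ is already globally bounded, Grönwall gives $\|u(t)\|_{H^s} \leq \|u_0\|_{H^s}\exp(C\langle t\rangle)$ with $C$ depending on the conserved quantities. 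This simultaneously yields globality in $H^s$ and the claimed exponential-in-time upper bound.

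The main obstacle — though a mild one at this level of regularity — is the rigorous justification of the conservation of $E_1, E_3, E_5$ for the strong solutions produced by Theorem \ref{Main}, since those solutions live only in $\mathcal C([-T,T],H^s)$ with $s$ possibly just above $2$, whereas the formal manipulations that establish $\frac{d}{dt}E_5(u(t))=0$ involve integrations by parts at the level of several derivatives. I would handle this by working with the smooth solutions of the truncated problem \eqref{mKdVN}, for which conservation is an elementary finite-dimensional computation, obtaining $N$-uniform $H^2$ bounds from the conserved quantities, and then passing to the limit using the convergence and a priori estimates that Theorem \ref{Main} already supplies; alternatively one mollifies the data, uses persistence of regularity, and takes limits. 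The remaining steps — interpolation inequalities to absorb $R_2$, the continuation criterion, and the Grönwall estimate for higher $H^s$ norms — are routine.
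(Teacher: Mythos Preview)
Your proposal is correct and follows essentially the same route as the paper: the paper simply says that $E_5$ controls the $H^2$ norm uniformly in time and that the exponential bound for higher $s$ ``follows from the iteration of the local in time estimates,'' and you have fleshed out exactly this---using $E_1,E_3,E_5$ to bound $\|u\|_{H^2}$ a priori, then a tame Kato--Ponce energy inequality plus Gr\"onwall for the higher norms. Your explicit discussion of how to justify the conservation of $E_1,E_3,E_5$ via the truncated flow \eqref{mKdVN} is a point the paper leaves implicit but which is indeed handled by its approximation machinery.
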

\begin{remark} The exponential upper bound mentioned in the corollary can be updated to a uniform in time bound when $s\in \N$. This fact easily  follows from the conserved energies $E_{2k+1}$.  To the best of our knowledge a uniform bound is not known for $s$ fractional. The aforementioned exponential upper bound does not exploit the integrability of the equation but follows from the iteration of the local in time estimates. It is quite interesting to notice that by exploiting the invariance of the
measures associated to the conserved energies one can update almost surely the exponential bound  to a logarithmic one, (see Theorem \ref{probBBB}).
We recall that the deterministic uniform control for every time of the $H^s(\T)$ norm, with $s$ fractional, has been established for the flow associated with the cubic NLS in \cite{KST},
by using the action-angle variables. The same property has proved for the Benjamin-Ono equation in \cite{GK}.
To the  best of our knowledge this property, as well as the action-angle variables,  have not been identified yet  for complex valued mKdV. 
\end{remark}
The proof of Theorem \ref{Main} is inspired by the papers \cite{KT03}, \cite{K2004} and \cite{IK2007}. 

There have been  many contributions  by many authors on the study of the Cauchy  initial value problem for real and complex valued KdV and mKdV on both the line and the torus. Here we limit ourselves to recalling those contributions obtained for the  periodic mKdV. 

Using the classical energy method unconditional well-posedness in $H^s,  s>\frac 32$, for  both real and complex mKdV can be found  in \cite{Ka83, Ts81}. 
 It was Bourgain  \cite{B1993} who, using  techniques involving analytic number theory,  obtained  local well-posedness of analytic type for strong solutions for  \eqref{data} in $H^s, s\geq \frac12$. This result was stated for the  real valued mKdV, but the argument applies in the complex case as well, see also \cite{CKSTT04}. For real valued solutions this well-posedness was extended to global in time in \cite{CKSTT03}. That in the periodic case $s=\frac12$ is the threshold for analytic well-posedness  was proved in  \cite{CCT03}. For the real valued \eqref{data} in \cite{KT05} the authors proved well-posedness with continuity with respect to the initial data all the way to $L^2$ and in \cite{M12} it was proved that this result is sharp. Here the notion of solution is not as strong solution, but as the limit of smooth solutions. It turns out that by {\it renormalizing} the equation in an appropriate manner one can push this type of well-posedness for real valued solutions  even below $L^2$, 
as for example in \cite{KM17}.
More recently in \cite{Ch21,Ch23}, again via renormalization,  local well-posedness of strong solutions was proved for the complex valued mKdV in a larger class of Fourier Lebesgue spaces as well as global well-posedness in a smaller set using the ideas of \cite{OW21} based on work in \cite{KVZ18}.

All the results mentioned above, except for  \cite{KT05} and   \cite{KM17},  feature conditional uniqueness since they are proved using a contraction method. 
 The results  in  \cite{KT05} and   \cite{KM17} that use inverse scattering produce solutions that are limits of any sequence of smooth solutions with data approximating the given initial data.  The uniqueness of such solutions is immediate from their definition.
We finally  recall the papers \cite{KO} and \cite{MPV}, where unconditional uniqueness is established for distributional solutions of the real valued mKdV, up to regularity $H^{1/2}(\T) $ and $H^{1/3}(\T)$  respectively.
  However, to the best of our knowledge, before our Theorem \ref{Main} there seems to be no  results in the literature about unconditional uniqueness of strong solutions below the classical regularity $s>\frac 32$ for the complex valued mKdV \eqref{data} on $\T$.

\subsection{The probabilistic result}\label{prob}
A basic but fundamental question is how does randomness propagate under nonlinear dispersive and wave flows. Such questions are physically and mathematically relevant both in equilibrium statistical mechanics and constructive quantum field theory (QFT), e.g.  invariance of Gibbs measures under the flow of the equation and Poincar\'e recurrence,  as well as in the study of out-of-equilibrium dynamics (e.g. turbulence).  Seminal works by  Bourgain in the '90s \cite{Bou2, Bou3} to prove the invariance of Gibbs measures for certain Hamiltonian PDE introduced novel probabilistic and analytic methods. At the root of these works were ideas by  Lebowitz, Rose and Speer \cite{LRS88}  who introduced the concept of studying the NLS from a statistical mechanics perspective via appropriate ensembles on phase space, in particular via Gibbs measures. Their perspective was a drastic shift from the previous viewpoint of analyzing a particular microscopic dynamical trajectory. 

The Hamiltonian associated with complex valued mKdV does not allow for a construction of the Gibbs measure, with the Gaussian as a reference measure, since its quadratic part is real valued but not signed definite (see $E_4$ in \eqref{consexpl}). Therefore the question of existence of the Gibbs measure in this context is a question that needs further investigation. 

In this paper we focus instead on the construction and the invariance of weighted Gaussian measures associated to the higher conservation laws of the complex valued mKdV \eqref{mKdV0}, whose statistical ensembles correspond to higher and higher regularity spaces. To obtain the invariance we need to understand the global dynamics of the equation on the statistical ensemble of the measure. A deterministic global well-posedness result is established in Corollary 1.2, which yields the usual exponential upper bound in time on the Sobolev norm, in both integer and fractional case.  However, by exploiting the invariance of the associated measures and the deterministic local well-posedness, we are able to establish a better quantitive global dynamics result by proving that almost surely solutions grow at most logarithmically in time,  see Theorem \ref{probBBB}. More importantly we get the invariance of a sequence of weighted Gaussian measures, which are relevant in order to study the dynamical system associated with \eqref{mKdV0} from the viewpoint of statistical mechanics (Poincar\'e recurrence theorem etc.)\\

In order to state our  probabilistic  result 
we first introduce the Gaussian probability measure $\mu_n$.  Given a probability space $(\Omega, \mathcal A, p)$ and $\{g_{j}(\omega)\}_{j\in \Z}$  a sequence of centered, normalized, independent identically distributed (i.i.d.)  complex Gaussian random variables, we introduce for any given $n\in \N$ the random vector
\begin{equation}\label{randomizedgen}
 \varphi(x, \omega)=\sum_{j\in \Z} 
\frac{g_j(\omega)}{\sqrt{2\pi( 1+j^{2n})}} e^{i jx}.
\end{equation}
 The law of this random vector is formally given by Gaussian measures $\mu_n$ supported on $H^s(\T)$ with $s<\frac{2n-1}{2}$
(see Section \ref{poBBGA} for more details about properties of $\mu_n$). We mention that Gaussian measures on infinite dimensional Hilbert spaces are classically constructed (see e.g. \cite{Bog}). \\

Along with the Gaussian measures $\mu_{n}$ we also introduce weighted Gaussian measures as follows. First of all for every $R>0$ we introduce the cut-off functions
$$\chi_R(\cdot)=\chi(\frac \cdot R), \quad \chi\in C^\infty_0(\R,\R) \hbox{ and } \chi(x)=1 \hbox{ for } |x|<1.$$ 
Next we define the  density
\begin{equation}\label{FnR}
F_{n, R}(u)=\prod_{l\in \{0,\dots, n-1\}} \chi_R\Big(E_{2l+1}(u)\Big) \exp  \Big(- R_{n}(u) \Big),
\end{equation}
with $R_n(u)$ introduced in \eqref{energybasic},
and also the associated weighted Gaussian measure
$$d\rho_{n, R}= F_{n, R}(u) d\mu_n.
$$
It is not difficult to check (see Section \ref{poBBGA}) that, thanks to the truncation by the the cut--off function $\chi_R$ in \eqref{FnR}, we get  
 $\sup_{u\in H^{n-1}} |F_{n,R}(u)|<\infty$ and hence, since $\mu_n(H^{n-1})=1$ we have that
$$F_{n, R}(u)\in L^\infty(\mu_n),$$ then the measure $\rho_{n,R}$ defined above is well defined. 
We recall that the idea to truncate the density by using previous conservation laws, as in \eqref{FnR}, goes back to the work of Zhidkov (see \cite{ZhLN}) who constructed a sequence of invariant measures at regularity above  the Gibbs measure, for cubic NLS and for real-valued KdV.
We can now state the main probabilistic results of this paper.
\begin{thm}\label{probBBB} Let $n\geq 2$ be an integer. For every $s\in (\frac 43, \frac{2n-1}2)$, there exists a Borel set $\Sigma^s \subset H^s(\T)$ such that:
\begin{enumerate}
\item[(i)]  $\mu_{n}(\Sigma^s)=1;$
\item[(ii)] for every $u_0\in \Sigma^s$ the unique local solution $u$ to \eqref{data}, provided by Theorem \ref{Main}, can be extended globally in time to a solution
$u\in {\mathcal C}(\R, H^s(\T))$; moreover there exists $C>0$ such that 
$$\|u(t,x)\|_{H^s(\T)}\leq C \sqrt {C+\ln (1+|t|)};$$
\item[(iii)] $\Sigma^s$ is invariant along the nonlinear flow $\Phi(t)$ associated with \eqref{data} established in  $(ii)$; moreover
for every $R>0$ the measure $\rho_{n, R}$ is invariant along the flow $\Phi(t)$ restricted on the invariant set $\Sigma^s$.
\end{enumerate}
\end{thm}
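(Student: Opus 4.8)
The plan is to follow the now-standard Bourgain–Zhidkov scheme for constructing invariant measures, but with the key modifications forced by working with complex-valued solutions. First I would introduce the finite-dimensional (Fourier) truncation \eqref{mKdVN}: project the equation onto frequencies $|j|\leq N$ via $P_N$, obtaining a system of ODEs on $\C^{2N+1}$ which is globally well-posed, preserves the (truncated) conservation laws $E_1^N,\dots,E_{2n-1}^N$, and — crucially — preserves Lebesgue measure on $P_N L^2$, since the truncated Hamiltonian vector field is divergence-free. Consequently the truncated weighted measure $d\rho_{n,R}^N = F_{n,R}^N(u)\, d\mu_n^N$ (with $F_{n,R}^N$ built from the truncated conservation laws as in \eqref{FnR}) is \emph{exactly} invariant under the truncated flow $\Phi_N(t)$; this is Liouville's theorem plus the chain rule. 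The logarithmic growth bound in part (ii) then comes from Bourgain's trick: invariance gives, for the set where $\|P_N u(t)\|_{H^s}$ is large, a measure estimate uniform in $t$ over a short window, summing dyadically over $t$ to get the $\sqrt{C+\ln(1+|t|)}$ bound on a full-measure set $\Sigma^s$.

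Second, I would pass to the limit $N\to\infty$. Here the essential ingredients are: (a) the $N$-uniform a priori bounds for solutions to \eqref{mKdVN} provided by the deterministic theory behind Theorem \ref{Main}; (b) convergence of the truncated flow $\Phi_N(t)u_0 \to \Phi(t)u_0$ in $H^s$, locally uniformly in $t$, for $u_0$ in a set of full $\mu_n$-measure — this uses the local well-posedness and the stability/approximation estimates, together with the globalization argument; and (c) convergence of the densities $F_{n,R}^N \to F_{n,R}$ in $L^1(\mu_n)$ (or $L^p$), which follows because each $E_{2l+1}^N(u)\to E_{2l+1}(u)$ and $R_n^N(u)\to R_n(u)$ $\mu_n$-almost surely and the cut-offs $\chi_R$ keep everything uniformly bounded, so dominated convergence applies on the Gaussian space. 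Combining (a)–(c), invariance of $\rho_{n,R}^N$ under $\Phi_N(t)$ upgrades to invariance of $\rho_{n,R}$ under $\Phi(t)$: for a bounded continuous $G$ on $H^s$ one writes $\int G(\Phi(t)u)\,d\rho_{n,R} = \lim_N \int G(\Phi_N(t)u)\,d\rho_{n,R}^N = \lim_N \int G(u)\,d\rho_{n,R}^N = \int G(u)\,d\rho_{n,R}$, with the first and last equalities from convergence and the middle from exact finite-dimensional invariance. Part (iii)'s invariance of the set $\Sigma^s$ is then automatic: $\Sigma^s$ is defined through the a.s. properties that are preserved by the flow, or one simply replaces $\Sigma^s$ by $\bigcap_{t\in\Q}\Phi(t)^{-1}\Sigma^s$, which still has full measure by invariance of $\rho_{n,R}$ (hence of $\mu_n$ on $\Sigma^s$) and is genuinely invariant.

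I expect the main obstacle — and the place where the complex-valued setting departs sharply from Zhidkov's real-valued argument — to be step (b), i.e. establishing the $N$-uniform approximation and the almost-sure global bounds needed to push the truncated flows to the limit and control $\|u(t)\|_{H^s}$. In the real KdV/mKdV case one can exploit positivity or simpler structure of the conservation laws and of $R_n$; here $E_4$ (the Hamiltonian of \eqref{mKdV0} itself) is real but sign-indefinite, and more importantly the densities $R_n(u)$ appearing in \eqref{energybasic} have a more complicated multilinear structure (spelled out in the promised Section \ref{RNU}), so one must show that $R_n$ is $\mu_n$-integrable (indeed in every $L^p$) and that the frequency-truncation errors $R_n - R_n^N$ are small in $L^1(\mu_n)$ with quantitative rates — this requires careful Wick-ordering-style estimates or direct Gaussian hypercontractivity bounds on the multilinear expressions, using that $\mu_n$ lives on $H^{s}$ for all $s<\frac{2n-1}{2}$ and that $R_n$ involves at most $2n-2$ derivatives. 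A secondary technical point is matching the threshold $s>\frac43$ coming from Theorem \ref{Main} with the support threshold $s<\frac{2n-1}{2}$ of $\mu_n$, which is exactly why the hypothesis $n\geq 2$ (so that the interval $(\frac43,\frac{2n-1}{2})$ is nonempty) appears; the local theory must be run at such an $s$, and the $N$-uniform local-in-time bounds from Theorem \ref{Main} must be combined with the measure-invariance globalization to cover all times.
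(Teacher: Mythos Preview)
Your proposal contains a genuine gap at its very first step: you assert that the truncated flow $\Phi_N(t)$ ``preserves the (truncated) conservation laws $E_1^N,\dots,E_{2n-1}^N$'' and hence that $\rho_{n,R}^N$ is \emph{exactly} invariant under $\Phi_N(t)$. This is false, and the failure is precisely the heart of the problem. The truncated flow is the Hamiltonian flow of the truncated $E_4$, so Liouville's theorem gives conservation of Lebesgue measure and of $E_4^N$ (and of the mass $E_1^N$), but it does \emph{not} preserve the higher truncated energies $E_3^N, E_5^N,\dots,E_{2n+1}^N$. Integrability of the full equation guarantees that $E_{2n+1}$ is conserved by the full flow $\Phi(t)$, but the Dirichlet projection $\Pi_N$ destroys the commutation structure responsible for this, so $\frac{d}{dt}E_{2n+1}(\Pi_N\Phi_N(t)u)\neq 0$ in general. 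Consequently your ``Liouville's theorem plus the chain rule'' argument for exact invariance of $\rho_{n,R}^N$ breaks down, and with it the clean Bourgain globalization you outline.

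What the paper actually does is prove an \emph{almost invariance} statement (Proposition \ref{impo}): $\sup_{t,A}|\rho_{n,R,N}(\Phi_N(t)A)-\rho_{n,R,N}(A)|\to 0$ as $N\to\infty$. This requires estimating the functionals $E_{j,N}^*(u)=\frac{d}{dt}E_j(\Pi_N\Phi_N(t)u)|_{t=0}$ in $L^2(\mu_n)$, which is the main analytic work (Propositions \ref{E1star}, \ref{E2star}). The worst terms, such as \eqref{nahm}, vanish identically by integration by parts in the real-valued case but \emph{not} in the complex case; controlling them requires a probabilistic argument on multilinear Gaussian sums, exploiting cancellations via a pairing decomposition (Subsection \ref{PAIR}, Lemmas \ref{ZNNN}--\ref{equaltriplet}). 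So the place where the complex-valued setting departs from Zhidkov is not your step (b) but rather this almost-invariance estimate. Once only almost invariance is available, the globalization and construction of the invariant set $\Sigma^s$ (Section \ref{PROBarg}) are also more delicate than Bourgain's original argument: one must carefully track the error in \eqref{almostconv} through the iteration, and the invariant set is built via a $\limsup$-type construction over a diagonal subsequence rather than a simple intersection.
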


The proof of Theorem \ref{probBBB} will be given in full detail for $n=2$, in order to make more clear and transparent the argument. The general case $n>2$ will be sketched in Section \ref{RNU}.
Some comments about Theorem \ref{probBBB}  are in order: 
\begin{enumerate}
\item with item $(ii)$ we show that local solutions to the Cauchy problem 
\eqref{data} can be globalized almost surely for $s>\frac 43$,
despite the deterministic globalization result in Corollary \ref{COR} which holds for $s\geq 2$;
\item  by item $(ii)$ we obtain almost surely
a logarithmic control on the growth of fractional $H^s(\T)$ Sobolev norms, for $s>\frac 43$ (which improves on the aforementioned deterministic exponential upper bound 
of $H^s(\T)$ for $s\geq 2$ fractional); 
\item more significantly we construct weighted Gaussian invariant measures for the global flow, this is a basic tool to study dynamical systems from the statistical mechanics point of view. 
\end{enumerate}

We mention that Theorem \ref{probBBB} is in the spirit  of  Bourgain's results (see \cite{Bou2}, \cite{Bou3}), that mainly considered the Gibbs measures, namely the measures associated with the Hamiltonian. The main point in our result is that we work
with conservation laws which are above the Hamiltonian. Indeed, as mentioned at the beginning of this section, it is not even clear how to make sense of the Gibbs measure for the complex valued mKdV in the framework of the Gaussian measure.  In general working with measures associated with higher order conservation laws,  makes simpler the needed Cauchy theory.  On the other hand it makes more difficult the problem of the invariance since the higher order measures are no longer  invariant along the truncated flows (see \eqref{mKdVN}). 
We also note that we work with complex-valued mKdV, which makes the proof of the invariance of the measure much more involved compared to the real-valued case. In fact, 
in the real valued case the invariance of the measure $\rho_{n,R}$ can be proved following the same arguments as in Zhidkov (see \cite{ZhLN}). Concerning our proof,
one can see the difference between the real-valued case and the complex-valued case, by looking at the expression 
in \eqref{nahm}, which in the case of real-valued functions is identically equal to zero by an easy integration by parts argument.
However in the complex valued case it is the main term to be estimated. Our argument to prove the invariance of $\rho_{n,R}$ is inspired by a series of papers
devoted to the existence of a sequence of invariant measures for the Benjamin-Ono equation (see \cite{TV1}, \cite{TV2}, \cite{TV3} \cite{DTV}), whose ideas have been exploited in other papers (see \cite{GLV1}, \cite{GLV2}, \cite{CLO}, \cite{ChF} etc.). We also mention the paper \cite{ChF}, where the authors study, for the real-valued mKdV, invariant measures associated with conservation laws which are at level of regularity below the Gibbs measure. We finally quote the paper \cite{ONRS} devoted to the construction of invariant measures for the derivative NLS (DNLS), where  the same issue on the lack of invariance of the conservation law, along the associated finite dimensional approximating equations, appears. The technique used in \cite{ONRS}
to overcome this difficulty,
which is related to a strong deterministic control of the solutions, is different from the approach of this paper, which is based more on  probabilistic considerations.

\subsection{Organization of the paper}

Section \ref{sec-pf-main} is devoted to the analysis of the Cauchy problem \eqref{data} and its finite dimensional approximation \eqref{mKdVN}, in particular the convergence of solutions of \eqref{mKdVN} to solutions of \eqref{data} is established. In Section \ref{poBBGA} we provide basic facts about weighted and unweighted Gaussian measures. Also, the notion of pairing is introduced there. We shall see that it will be rather useful to study the momentum of multilinear Gaussian variables. In  Section \ref{almostinv} we show in detail how to overcome the lack of invariance of the energies $E_{2n+1}(u)$ along the flow \eqref{mKdVN} - we give a detailed proof in the case $n=2$.
In Section \ref{RNU}, we provide the main ideas on how to treat the general case $n>2$.
In Section \ref{PROBarg} we prove Theorem \ref{probBBB} for $n=2$ (indeed the argument of this section extends {\em mutatis mutandis}
to any generic $n\geq 2$). The arguments of Section \ref{PROBarg} are inspired by  Bourgain's original papers and subsequent papers by other authors (in particular \cite{Tz}), with the additional difficulty coming from the fact that we 
work with conservation laws which are above Hamiltonian, and hence are not preserved along \eqref{mKdVN}. This fact makes the relative construction delicate and more involved.

\subsection{Notation}
We introduce some notations useful in the sequel. 
\begin{itemize} 
\item  We shall use the following Fourier multipliers:
\begin{eqnarray*}
\widehat{Q^k(g)}(m)&=&\chi_{[2^{k-1},2^k)}(|m|)\hat g(m), \quad \forall k>1,\\
\widehat{Q^0(g)}(m)&=&\chi_{[0,1)}(|m|)\hat g(m),\\
\widehat{J^s g}(m)&=&(1+|m|^2)^{s/2}\hat g(m), \quad \forall s\in \R.
\end{eqnarray*}
\item We denote by $W(t)$ the propagator associated with the linear KdV
$$
\begin{cases}
\partial_t v+\partial_x^3 v=0\\
v(x,0)=u_0
\end{cases}
$$
namely
\begin{equation} \label{wp-W} 
W(t)u_0(x)=\sum_{k\in \Z}\widehat{u_0}(k)e^{i(kx+tk^3)}.
\end{equation} 

\item To shorten the notations we shall write sometimes $\partial_x=\partial$. 
\item We shall write $\int f=\int_\T f dx$.
\item We also write  $H^s=H^s(\T)$ and endow $H^s$ with the following norm $\|u\|_{H^s}=\|J^s u\|_{L^2}$.  
\item We denote $B_R^s$ the ball of radius $R>0$ in $H^s$. 
\item In  the paper the functions are always assumed to be $\C$-valued. 
\item Often, for $x\in \R$  we will use the notation $\langle x\rangle:=\sqrt{1+x^2}$. 
\item We denote by $\Phi(t)$ and $\Phi_N(t)$ the flows associated with \eqref{data} and \eqref{mKdVN}. 
\item For any topological space $X$ we define ${\mathcal B}(X)$ the Borelian subsets of $X$.
\end{itemize}

\section{Finite dimensional approximation and proof of Theorem \ref{Main}} \label{sec-pf-main} 
To prove  Theorem \ref{Main} we first introduce a frequency truncated version of the initial value problem \eqref{data}, we get estimates independent of the truncating parameter, and then we pass to the limit. We recall that the Cauchy problems \eqref{mKdVN} are crucial along the proof of Theorem \ref{probBBB}.
We consider the following finite dimensional approximation of \eqref{data}:
\begin{equation}\label{mKdVN}
\begin{cases}
\partial_t u + \partial_x^3 u - 6\Pi_N (|\Pi_N u|^2 \partial_x \Pi_N u)=0, \quad (t,x)\in \T\times \R, \quad u(t,x)\in \C, \quad N\in \N\\
u(x,0)=u_0
\end{cases}
\end{equation}
where $\Pi_N$ denotes the Dirichlet projection
\begin{equation}\label{pin}
\Pi_N \Big (\sum_{n\in \mathbb Z} u_n e^{inx}\Big)= \sum_{|n|\leq N}  u_n e^{inx}.
\end{equation}
Along with $\Pi_N$ we also introduce 
$$\Pi_{>N}=I-\Pi_N$$ where $I$ is the identity operator, namely
\begin{equation}\label{pi>n}
\Pi_{>N} \Big (\sum_{n\in \mathbb Z} u_n e^{inx}\Big)= \sum_{|n|>N}  u_n e^{inx}.
\end{equation}

In what follows we shall denote by $\Phi(t)$ the flow associated with \eqref{data} and by $\Phi_N(t)$ the flow associated with \eqref{mKdVN}.
It is worth mentioning that for any given $N\in \N$, the Cauchy problem \eqref{mKdVN} is globally well-posed in $H^s$ for any $s\geq 0$.
Indeed notice that the solution $u$ to \eqref{mKdVN} splits as
follows:
$$u=\Pi_N u+ \Pi_{>N} u,$$
where the second term $ \Pi_{>N} u$ evolves with the linear KdV flow (and hence $\| \Pi_{>N} u\|_{H^s}= \|\Pi_{>N} u_0\|_{H^s}$), while the first one $\Pi_N u$ 
evolves according with a system of $2N+1$ nonlinear ODEs, hence the solution
exists locally in time. Moreover it can be extended globally in time since one can check directly that $\|\Pi_N u(t)\|_{L^2}=\|\Pi_N u(t)\|_{L^2}$ for all times $t$.
We conclude the globalization argument since for every $s\geq 0$ and $N\in \N$, we have 
that $\|\Pi_N u(t)\|_{H^s}$ is equivalent to $\|\Pi_N u(t)\|_{L^2}$, with constants in the equivalence that blow-up when $N\rightarrow \infty$.

Next we show some uniform (with respect to  $N$) energy estimates, locally in time, for the flows $\Phi_N(t)$.
 
\subsection{Energy and dispersive estimate for the flow $\Phi_N(t)$}
The main result of this section is the following proposition. Recall that $B_S^s$ denotes the ball of radius $S$ in the space $H^s$. 
\begin{prop}\label{cauchytheory}
Let $\Phi_N(t)$ be the flow associated with \eqref{mKdVN}. Let $s>\frac 43$ be fixed, then there exist $c,\beta>0$ such that for every $S>0$ 
we have:
\begin{equation}\label{LRG}\sup_{\substack{u_0\in B_S^s, N\in \N\\|t|\leq c\langle S\rangle ^{-\beta}}}
\|\Phi_N(t)u_0\|_{H^s}\leq S+\frac 1S.\end{equation}
\end{prop}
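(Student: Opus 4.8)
Following \cite{KT03,K2004,IK2007}, I would combine an $H^s$ energy estimate with short-time dispersive estimates and a continuity argument, after first reducing to the low-frequency part of the solution. Let $u$ solve \eqref{mKdVN} and write $u=\Pi_N u+\Pi_{>N}u$. Since $\Pi_N,\Pi_{>N}$ are Fourier multipliers commuting with $\partial_x^3$ and $\Pi_N^2=\Pi_N$, the high part $w:=\Pi_{>N}u$ solves the linear Airy equation, hence $\|w(t)\|_{H^s}=\|\Pi_{>N}u_0\|_{H^s}$ for all $t$, while $v:=\Pi_N u$ solves
\begin{equation}\label{vequation}
\partial_t v+\partial_x^3 v=6\,\Pi_N\big(|v|^2\partial_x v\big),\qquad \Pi_N v=v,\qquad v(0)=\Pi_N u_0 .
\end{equation}
From the orthogonal splitting $\|\Phi_N(t)u_0\|_{H^s}^2=\|v(t)\|_{H^s}^2+\|\Pi_{>N}u_0\|_{H^s}^2$ together with $\|v(0)\|_{H^s}^2+\|\Pi_{>N}u_0\|_{H^s}^2=\|u_0\|_{H^s}^2\le S^2$, it suffices to prove $\|v(t)\|_{H^s}^2\le\|v(0)\|_{H^s}^2+1$ for $|t|\le c\langle S\rangle^{-\beta}$. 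Throughout, $\Pi_N$ acts only as a Fourier multiplier bounded uniformly in $N$ on $L^p$, $H^s$ and the Bourgain spaces $X^{s,b}$, and supported on $|\xi|\le N$; that is precisely why every estimate below is uniform in $N$, so I will henceforth suppress $\Pi_N$ and treat \eqref{vequation} as mKdV itself.

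\textbf{Energy estimate, where it fails, and the short-time fix.} Differentiating, $\frac{d}{dt}\|v(t)\|_{H^s}^2=12\,\Re\int J^s(|v|^2\partial_x v)\,\overline{J^s v}$. The ``diagonal'' piece $6\,\Re\int|v|^2\,\partial_x(J^s v)\,\overline{J^s v}=-3\int\partial_x(|v|^2)\,|J^s v|^2$ is bounded by $\|\partial_x(|v|^2)\|_{L^\infty}\|v\|_{H^s}^2$, which — with a commutator estimate for $[J^s,|v|^2]\partial_x v$ — would close things, but only for $s>\tfrac32$, since it needs $v\in W^{1,\infty}$. To reach $s>\tfrac43$ one must integrate in $t$ first and exploit the Airy dispersion, working on a short interval $I=[-T,T]$ in short-time $X^{s,b}$-type spaces: a solution space $F^s(T)$ built from the dyadic pieces $Q^k v$ measured in $X^{0,1/2}$ on subintervals of length $\sim\langle 2^k\rangle^{-1}$, a matching nonlinearity space $N^s(T)$, and an energy space $E^s(T)$ with $\|v\|_{E^s(T)}\sim\sup_{|t|\le T}\|v(t)\|_{H^s}$, possibly after adding a quartic correction term (the modified energy method). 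The three ingredients, in the style of \cite{IK2007}, are: (i) $\|v\|_{F^s(T)}\lesssim\|v\|_{E^s(T)}+\||v|^2\partial_x v\|_{N^s(T)}$ (Duhamel); (ii) $\||v|^2\partial_x v\|_{N^s(T)}\lesssim T^{\theta}\|v\|_{F^s(T)}^3$ for some $\theta>0$, using that the Airy resonance function $\xi_1^3-\xi_2^3+\xi_3^3-(\xi_1-\xi_2+\xi_3)^3=3(\xi_1-\xi_2)(\xi_2-\xi_3)(\xi_1+\xi_3)$ is large off a thin resonant set, together with Bourgain's $L^4_{t,x}$ Strichartz estimate on $\T$ and the bilinear $\partial_x^{1/2}$-smoothing for frequency-separated products; (iii) $\|v\|_{E^s(T)}^2\lesssim\|v(0)\|_{H^s}^2+T^{\theta}\big(\|v\|_{F^s(T)}^4+\|v\|_{F^s(T)}^6\big)$, from the energy identity above after removing the bad contribution by integration by parts in $t$ through \eqref{vequation}; the factor $T^\theta$ in all three comes from H\"older in time on the short subintervals.

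\textbf{Closing the argument.} With (i)--(iii) in hand, a standard continuity/bootstrap argument gives \eqref{LRG}. For $T\le1$ the function $T'\mapsto\|v\|_{F^s(T')}$ is continuous with $\limsup_{T'\to0}\|v\|_{F^s(T')}\lesssim\|v(0)\|_{H^s}$; under the hypothesis $\|v\|_{F^s(T)}\le AS$ with $A$ a fixed large constant, (iii) gives $\|v\|_{E^s(T)}^2\lesssim S^2+T^\theta S^6$, whence (i)--(ii) give $\|v\|_{F^s(T)}\lesssim S+T^\theta S^3(1+\cdots)$, which strictly improves the hypothesis once $T^\theta S^4\lesssim1$, i.e. $T\le c\langle S\rangle^{-\beta}$ with $\beta=\beta(\theta)$ chosen large. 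Re-entering (iii) on such an interval, $\sup_{|t|\le T}\|v(t)\|_{H^s}^2\le\|v(0)\|_{H^s}^2+CT^\theta S^6\le\|v(0)\|_{H^s}^2+1$ once $c$ is small enough (taking $\beta$ large so that $\langle S\rangle^{-\beta\theta}S^6\le1$), which is exactly what is needed; the case $S\lesssim1$ is immediate, since then $\langle S\rangle^{-\beta}\gtrsim1$, $\|v(t)\|_{H^s}^2\lesssim S^2+S^6\lesssim1$, and $S+\tfrac1S\ge2$.

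\textbf{Main obstacle.} The hard part will be ingredient (ii) — and correspondingly the removal of the bad term in (iii) — at regularity just above $\tfrac43$: since $|v|^2\partial_x v=v\bar v\,\partial_x v$ loses a full derivative, one must run a delicate Littlewood--Paley case analysis, and the (near-)resonant frequency configurations, where $(\xi_1-\xi_2)(\xi_2-\xi_3)(\xi_1+\xi_3)$ is small so that neither the normal form nor the bilinear smoothing restores the full derivative, have to be treated separately using conservation of the $L^2$ norm and the precise conjugate structure of the nonlinearity. The threshold $s>\tfrac43$ is exactly where this analysis first closes, and which interaction is critical only becomes clear once the cases are laid out, so I would not expect to identify it in advance. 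I note that the conjugate is genuinely essential: in the real-valued case a further integration by parts kills the worst term, whereas in the complex case it survives — the same feature highlighted in the introduction for the probabilistic part.
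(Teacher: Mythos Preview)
Your proposal takes a genuinely different and considerably more elaborate route than the paper. You set up the full short-time $X^{s,b}$ machinery (spaces $F^s$, $N^s$, $E^s$, modified energies, trilinear estimates with a case analysis on the resonance function), in the style of the Ionescu--Kenig--Tataru energy method. The paper instead stays close to the classical energy method and avoids $X^{s,b}$-type spaces entirely.

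The key point you miss is that the energy identity
\[
\Bigl|\frac{d}{dt}\|J^s v\|_{L^2}^2\Bigr|\le C\,\|J^s v\|_{L^2}^3\,\|v\|_{W^{1,\infty}}
\]
(obtained exactly as you describe, via Kato--Ponce and integration by parts; this is Lemma~\ref{lemma-energy}) does \emph{not} require $s>\tfrac32$ to close once integrated in time: one only needs $\int_0^T\|v(t)\|_{W^{1,\infty}}\,dt$ controlled in terms of $\sup_t\|v(t)\|_{H^s}$, and this $L^1_tW^{1,\infty}_x$ bound is where the dispersion enters. The paper establishes it (Lemma~\ref{infinity}) by the Koch--Tzvetkov argument from \cite{KT03,K2004}: at dyadic frequency $2^j$, write Duhamel on $\sim2^j$ subintervals of length $2^{-j}T$, apply Bourgain's periodic $L^6$ Strichartz estimate \eqref{bour} together with the Sobolev embedding $W^{1/6+\varepsilon,6}\hookrightarrow L^\infty$, and sum in $j$. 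The upshot is
\[
\int_0^T\|\partial_x v\|_{L^\infty}\,dt\ \lesssim\ \langle T\rangle\, T^{5/6}\Bigl(\sup_{[0,T]}\|v\|_{H^s}+\|v\|_{L^3_tH^s}^3\Bigr),\qquad s>\tfrac43,
\]
the threshold $\tfrac43=1+\tfrac13$ arising from the $2^{j(1/3+\varepsilon)}$ loss in this construction. Feeding this back into the integrated energy inequality gives \eqref{energ+disp}, and then a bootstrap (split into small $S$ and $S\ge S^*$, matching your last paragraph) finishes.

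So the paper's argument is strictly more elementary than yours: no $X^{s,b}$ spaces, no trilinear case analysis on the resonance function, no modified energy, no normal forms. What that buys is a short self-contained proof well adapted to the later probabilistic sections; what your route would buy, if carried out, is perhaps a lower regularity threshold, at the cost of a much longer argument whose hardest step you rightly flag as open-ended. One small warning: your claim that $\Pi_N$ is uniformly bounded on $L^p$ is false for $p=\infty$ (Dirichlet kernel), so ``suppressing $\Pi_N$'' requires care; in the paper's argument this is never an issue because $\Pi_N$ only meets $L^2$-based norms.
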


\begin{remark}
The bound $S+\frac 1S$ on the  right hand side of \eqref{LRG} is inspired by \cite{btt}. In particular it is rather useful along the proof of 
items $(ii), (iii)$ of Theorem \ref{probBBB}. See in particular the proof of Lemma \ref{i+1}.
\end{remark}

We shall need some lemmas in order to prove Proposition \ref{cauchytheory}.

\begin{lemma} \label{lemma-energy} 
Let $s>\frac 12$ be fixed, then there exists a universal constant $C>0$ such that for every $u_0\in H^s$ and $N\in \N$, we have the following bound:
\begin{equation}\label{energy}
\left|\frac{d}{dt}\|J^s u_N(t)\|_{L^2}^2\right| \leq C\|J^s \Pi_N u_N(t)\|_{L^2}^3\|\Pi_N u_N (t)\|_{W^{1,\infty}},
\end{equation}
where $u_N(t)=\Phi_N(t) u_0$.
\end{lemma}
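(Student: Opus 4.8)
The plan is to differentiate $\|J^s u_N(t)\|_{L^2}^2$ in time, use the equation \eqref{mKdVN} to substitute for $\partial_t u_N$, and see that the linear dispersive term contributes nothing because $\partial_x^3$ is skew-adjoint. Concretely, write $u = u_N$ and compute
\begin{equation*}
\frac{d}{dt}\|J^s u\|_{L^2}^2 = 2\Re\int \overline{J^s u}\, J^s \partial_t u = 2\Re\int \overline{J^s u}\, J^s\bigl(-\partial_x^3 u + 6\Pi_N(|\Pi_N u|^2\partial_x\Pi_N u)\bigr).
\end{equation*}
Since $\int \overline{J^s u}\,J^s\partial_x^3 u = \int \overline{\partial_x^{3/2}\text{-type symbol}}\cdots$ is purely imaginary (integrate by parts three times, or note that $J^s$ and $\partial_x^3$ are Fourier multipliers with the second purely imaginary on the diagonal), the linear term drops out of the real part. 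It then remains to bound
\begin{equation*}
\left|\Re\int \overline{J^s u}\, J^s \Pi_N(|\Pi_N u|^2 \partial_x\Pi_N u)\right|.
\end{equation*}
Using self-adjointness of $\Pi_N$ and that $\Pi_N u$ appears everywhere, this equals $\bigl|\Re\int \overline{J^s \Pi_N u}\, J^s(|\Pi_N u|^2 \partial_x \Pi_N u)\bigr|$, so set $v = \Pi_N u$ and estimate $\bigl|\int \overline{J^s v}\, J^s(|v|^2 \partial_x v)\bigr|$.

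The main step is a commutator/Kato--Ponce-type estimate for the trilinear term. The naive bound would place $s+1$ derivatives on one factor; the point is that the top-order piece, where all $s$ derivatives from $J^s$ plus the one explicit $\partial_x$ land on a single $v$, must be tamed by an integration by parts. Write $J^s(|v|^2\partial_x v) = |v|^2 \partial_x J^s v + [J^s, |v|^2]\partial_x v$. For the first piece, $\Re\int \overline{J^s v}\,|v|^2\partial_x J^s v = \tfrac12\Re\int |v|^2 \partial_x(|J^s v|^2) = -\tfrac12\Re\int \partial_x(|v|^2)\,|J^s v|^2$, which costs only one derivative on a low-order factor and is bounded by $\|v\|_{W^{1,\infty}}\|J^s v\|_{L^2}^2$ — actually by $\|v\|_{W^{1,\infty}}\|v\|_{L^\infty}\|J^s v\|_{L^2}^2$ after keeping track of the extra $|v|$; here one uses $|v|^2 = v\bar v$ so $\partial_x(|v|^2)$ genuinely brings down $\|v\|_{L^\infty}\|\partial_x v\|_{L^\infty}$. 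For the commutator piece, the standard Kato--Ponce commutator estimate gives $\|[J^s,|v|^2]\partial_x v\|_{L^2}\lesssim \|\nabla(|v|^2)\|_{L^\infty}\|J^{s-1}\partial_x v\|_{L^2} + \|J^s(|v|^2)\|_{L^2}\|\partial_x v\|_{L^\infty}$; the first term is $\lesssim \|v\|_{W^{1,\infty}}\|v\|_{L^\infty}\|J^s v\|_{L^2}$ directly, and the second needs the fractional Leibniz rule $\|J^s(v\bar v)\|_{L^2}\lesssim \|v\|_{L^\infty}\|J^s v\|_{L^2}$ (valid for $s>0$), again yielding $\|v\|_{W^{1,\infty}}\|v\|_{L^\infty}\|J^s v\|_{L^2}$. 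Pairing against $\overline{J^s v}$ by Cauchy--Schwarz and collecting, one arrives at $\bigl|\tfrac{d}{dt}\|J^s u\|_{L^2}^2\bigr| \lesssim \|v\|_{W^{1,\infty}}\|v\|_{L^\infty}\|J^s v\|_{L^2}^2 \leq C\|J^s\Pi_N u\|_{L^2}^3\|\Pi_N u\|_{W^{1,\infty}}$, the last inequality using $\|v\|_{L^\infty}\lesssim \|J^s v\|_{L^2}$ by Sobolev embedding since $s>\tfrac12$.

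The hypothesis $s>\tfrac12$ enters in two distinct places: it is needed for the Sobolev embedding $H^s(\T)\hookrightarrow L^\infty(\T)$ that converts the $\|v\|_{L^\infty}$ factor into $\|J^s v\|_{L^2}$, and it ensures the fractional Leibniz estimates above are available in the clean form used. The main obstacle — really the only nonroutine point — is organizing the trilinear term so that no term carries $s+1$ derivatives on a single factor: the integration by parts in the "diagonal" piece $|v|^2\partial_x J^s v$ is essential, since without it one would be stuck with $\|J^{s+1}v\|_{L^2}$, which is not controlled. Everything else is a bookkeeping application of standard product and commutator estimates on $\T$, plus the skew-adjointness of the Airy term; I do not expect genuine difficulty there.
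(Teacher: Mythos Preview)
Your proposal is correct and follows essentially the same route as the paper: split $J^s(|v|^2\partial_x v)$ into $|v|^2\partial_x J^s v$ plus a commutator, integrate the first piece by parts to avoid the loss of a derivative, and control the commutator via the Kato--Ponce estimate \eqref{KP}, closing with the Sobolev embedding $H^s\hookrightarrow L^\infty$ and the algebra property of $H^s$ for $s>\tfrac12$. The only cosmetic difference is that the paper first splits $u_N=\Pi_N u_N+\Pi_{>N}u_N$ and observes the high-frequency part evolves linearly (so its $H^s$ norm is constant), whereas you achieve the same reduction in one line by moving $\Pi_N$ across via self-adjointness; the substance is identical.
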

\begin{proof} Since $\Pi_{>N} u$ evolves with the linear KdV flow, we have 
$\frac d{dt} \|\Pi_{>N} (J^s u_N)\|_{L^2}^2=0$. Hence we shall focus on  the estimate of  $\frac d{dt} \|\Pi_N (J^s u_N)\|^2_{L^2}$.
For simplicity we shall write $v_N=\Pi_N u_N$.
We have, by using the properties $\Pi_N=\Pi_N^2$ and $\Pi_N=\Pi_N^*$ the following chain of identities:
\begin{multline*}\frac{d}{dt}\int J^s v_N \overline{J^s v_N} = 12 \Re \int \Pi_N J^s(|v_N|^2\partial v_N)\overline{J^s v_N}  dx
=12 \Re \int J^s(|v_N|^2\partial v_N)\overline{ J^s v_N}
\\= 12 \Re \int [J^s(|v_N|^2\partial v_N)-|v_N|^2\partial J^s v_N]\overline{J^s v_N} 
+12 \Re \int |v_N|^2\partial J^s u\overline{J^s v_N} 
\\= 12\Re \int [J^s(|v_N|^2\partial v_N)-|v_N|^2\partial J^s u]\overline{J^s v_N} 
-6\int \partial (|v_N|^2)|J^s v_N|^2 
\end{multline*}
where in the last step we have integrated by parts.
By using the Kato-Ponce estimate in \cite{IK2007},
\begin{equation}\label{KP}\|J^s(fg)-fJ^s g\|_{L^2}\leq \|J^sf\|_{L^2}\|g\|_{L^\infty}+
(\|f\|_{L^\infty}+\|\partial f\|_{L^\infty})\|J^{s-1}g\|_{L^2}\end{equation}
with $g=\partial v_N$ and $f=|v_N|^2$, in conjunction with the H\"older inequality we get
\begin{multline*}
\left|\frac{d}{dt} \|J^s v_N\|_{L^2}^2\right|
\leq C \|J^s(|v_N|^2)\|_{L^2}\|\partial v_N\|_{L^\infty} \|J^s v_N\|_{L^2}\\+ C (\||v_N|^2\|_{L^\infty}+\|\partial (|v_N|^2)\|_{L^\infty})\|J^{s-1}\partial v_N\|_{L^2}\|J^s v_N\|_{L^2}
+ C\|\partial (|v_N|^2)\|_{L^\infty}  \|J^s v_N\|_{L^2}^2.
\end{multline*}
We conclude by combining 
the Sobolev embedding $H^s\subset L^\infty$ for $s>\frac 12$ and the fact that $H^s$ is an algebra for $s>\frac 12$.
\end{proof}

The next lemma  is an adaptation of Lemma 1.7 in \cite{K2004}  to our context, see also \cite{KT03}.

\begin{lemma} \label{infinity} Let $s>4/3$ be fixed. Then there exists $C=C(s)>0$ such that 
for all  $T>0$ and  for all $u_0\in H^s$ we have:
\begin{equation}\label{est2}
 \int_0^T\|\Pi_N u_N(t)\|_{W^{1,\infty}} dt\leq C  \langle T\rangle T^{\frac{5}6} \Big (\sup_{t\in [0,T]}\|J^s \Pi_N u_N(t)\|_{L^2}\Big) 
 +C  \langle T\rangle T^{\frac{5}6} \|J^s \Pi_N u_N(t)\|_{L^3([0,T],L^2)}^3,
\end{equation}
where $u_N=\Phi_N(t) u_0$.
\end{lemma}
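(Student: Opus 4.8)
The strategy is the standard one for controlling the $W^{1,\infty}$ norm of dispersive flows via a combination of Strichartz-type gains and the Duhamel formula, adapted from Lemma 1.7 in \cite{K2004} and the related analysis in \cite{KT03}, \cite{IK2007}. Write $v_N = \Pi_N u_N$ and note that $v_N$ solves $\partial_t v_N + \partial_x^3 v_N = 6\Pi_N(|v_N|^2 \partial_x v_N)$, so by Duhamel $v_N(t) = W(t)\Pi_N u_0 + 6\int_0^t W(t-t') \Pi_N(|v_N(t')|^2 \partial_x v_N(t'))\, dt'$. It suffices to bound $\|v_N\|_{L^1([0,T],W^{1,\infty})}$ by dyadically decomposing, i.e. estimating $\sum_k \|Q^k v_N\|_{L^1([0,T],W^{1,\infty})}$, and using that $Q^k$ localizes to frequencies $\sim 2^k$, so a $W^{1,\infty}$ bound at dyadic level $k$ costs a factor $2^k$ relative to $L^\infty$, which in turn (by Bernstein on $\T$) costs roughly $2^{k/2}$ relative to $L^2_x$.

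First I would record the relevant linear estimate. On the torus one has, for frequency-localized data at level $2^k$, a bound of the shape $\|Q^k W(t) f\|_{L^{p}_t L^\infty_x} \lesssim 2^{k\theta} \|Q^k f\|_{L^2_x}$ for a suitable pair $(p,\theta)$; combined with the short-time interval $[0,T]$ one extracts a small power of $T$ (the $T^{5/6}$ in the statement). Applying this to the linear term $W(t)\Pi_N u_0$ and summing the dyadic pieces against $\langle k\rangle$-type weights, using $s > 4/3$ to make $\sum_k 2^{k(\text{gain}) } 2^{-ks}\langle\cdot\rangle < \infty$ converge, gives the contribution $C\langle T\rangle T^{5/6} \sup_{t}\|J^s v_N(t)\|_{L^2}$. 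For the Duhamel term one applies the same linear estimate inside the time integral together with Minkowski's inequality, reducing matters to bounding $\|\Pi_N(|v_N|^2 \partial_x v_N)\|$ in an appropriate dual norm. Here one distributes three copies of $v_N$: the worst-behaved factor carries the derivative and is measured in $J^{s}$ (or $J^{s-1}\partial$), while the remaining two factors are controlled in $L^\infty$ via Sobolev embedding $H^s \subset L^\infty$, $s>1/2$; fractional Leibniz / the algebra property of $H^s$ handles the product. Tracking the time integrability carefully — using Hölder in $t$ on $[0,T]$ and the $L^3_t L^2_x$ smoothing norm on the right — produces the cubic term $C\langle T\rangle T^{5/6}\|J^s v_N\|_{L^3_t L^2_x}^3$.

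The main obstacle is bookkeeping the interplay between the derivative loss in $\partial_x v_N$, the $2^k$ loss in passing from $L^\infty$ to $W^{1,\infty}$, and the dyadic summation: one needs the linear smoothing gain to strictly beat $2^{k}\cdot 2^{-ks}$ after summing, which is precisely why the threshold $s>4/3$ appears (the $4/3$ reflects the particular Strichartz exponent available for the Airy group on $\T$, of the form $L^4$ or $L^6$ in the relevant variables, giving a gain like $2^{-k/3}$ per factor so that three factors plus one derivative land at $s>4/3$). A secondary delicate point is that $\Pi_N$ must be handled so that all constants are genuinely independent of $N$ — this is automatic since $\Pi_N$ is bounded on every $L^p$ and commutes with $W(t)$ and $J^s$, but it should be stated. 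Once the dyadic sums are shown to converge uniformly in $N$, assembling the linear and Duhamel contributions yields \eqref{est2}.
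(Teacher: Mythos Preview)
Your outline has the right ingredients (Bourgain's $L^6$ Strichartz estimate on $\T$, dyadic decomposition, Duhamel, fractional Leibniz) but it is missing the one step that makes the argument close at $s>4/3$: the frequency-dependent subdivision of the time interval, which is precisely the device taken from \cite{KT03} and \cite{K2004}.

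If you apply Duhamel once on all of $[0,T]$ as you propose, then at dyadic level $2^k$ the linear $L^6_{t,x}$ bound together with Sobolev $W^{1/6+\varepsilon,6}\hookrightarrow L^\infty$ and H\"older in time gives
\[
\|Q^k W(t)\phi\|_{L^1([0,T],L^\infty)}\lesssim T^{5/6}\langle T\rangle\, 2^{k(1/6+2\varepsilon)}\|Q^k\phi\|_{L^2}.
\]
Passing to $W^{1,\infty}$ costs an extra $2^k$. For the homogeneous piece this sums to $\|J^{7/6+}v_N(0)\|_{L^2}$, which is fine. But for the Duhamel piece, Minkowski and the same bound force you to control
\[
\int_0^T \big\|J^{7/6+}\big(|v_N|^2\partial v_N\big)\big\|_{L^2}\,dt,
\]
and by Leibniz this requires roughly $J^{13/6}v_N$ --- well beyond what $s>4/3$ affords. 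Your sentence ``three factors plus one derivative land at $s>4/3$'' is therefore not justified by the scheme you describe.

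The paper fixes this by working with $w_N=\partial v_N$, which solves $\partial_t w_N+\partial_x^3 w_N=\partial_x f_N$ with $f_N=6\Pi_N(|v_N|^2\partial v_N)$, and then, \emph{at each dyadic level} $2^j$, partitioning $[0,T]$ into $2^j$ subintervals $I_l$ of length $2^{-j}T$ and applying Duhamel on each $I_l$ from its left endpoint $a_l$. The homogeneous term on $I_l$ now involves $Q^j w_N(a_l)$ (this is why the final estimate carries $\sup_t\|J^s v_N(t)\|_{L^2}$ rather than just the initial datum), and the short length $|I_l|^{5/6}=(2^{-j}T)^{5/6}$ absorbs most of the $2^j$ losses: after summing the $2^j$ pieces one is left with only $2^{j(1/3+2\varepsilon)}$ on both the homogeneous and inhomogeneous contributions. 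Summing in $j$ then requires $\|J^{4/3+3\varepsilon}v_N\|_{L^2}$ for the linear part and $\|J^{1/3+3\varepsilon}f_N\|_{L^2}$ for the forcing, and the latter is bounded by $\|J^{4/3+3\varepsilon}v_N\|_{L^2}^3$ via Kato--Ponce. This is exactly where the threshold $4/3$ comes from; without the subdivision you cannot reach it.
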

\begin{proof}
In order to prove this lemma we use the bound from
(8.37) in \cite{B1993}. For every $\varepsilon>0$ there exists $C_\varepsilon$ such that
\begin{equation}\label{bour}
\Big\|\sum_{|n|\leq N}\widehat{\phi}e^{i(kx+tk^3)}\Big\|_{L^6([0,T], L^6)}\leq C_\epsilon \langle T\rangle  N^\epsilon\|\phi\|_{L^2}, \quad \forall T>0,
\end{equation} 
which in turn implies
\begin{equation}\label{bour1}
\|W(t)Q^j \phi\|_{L^6([0,T], L^6)}\leq C_\epsilon \langle T \rangle 2^{j\epsilon}\|Q^j \phi\|_{L^2},
\end{equation} 
where $W(t)$ is the propagator associated with the linear KdV, and the operator $Q^j$ localizes at frequencies of order $2^j$.
Next we use as usual the notation $v_N=\Pi_N (u_N)$ and we immediately note that  by the Sobolev embedding $H^s\subset L^\infty$ for $s>\frac 12,$ we get the bound
$$ \int_0^T\|v_N(t)\|_{L^\infty} dt\leq C T \Big(\sup_{t\in [0,T]}\|J^s v_N(t)\|_{L^2}\Big),$$
hence we shall focus on  the estimate of $\int_0^T\|\partial v_N(t)\|_{L^\infty} dt$.
Now  for any interval $I\subset [0,T]$, we get by the Sobolev embedding $W^{\frac 16+\varepsilon}\subset L^\infty$, H\"older inequality in time and 
\eqref{bour1}, the following bound
\begin{multline}\label{NKVPS}
\|W(t)Q^j \phi\|_{L^1(I,L^\infty)}
\leq |I|^{5/6} \|W(t)Q^j \phi\|_{L^6 (I,L^\infty)}
\\\leq C_\varepsilon |I|^{5/6} 
2^{j(1/6+\epsilon)} \|W(t)Q^j\phi\|_{L^6(I,L^6)}
\leq C_\varepsilon |I|^{5/6} \langle T \rangle
2^{j(1/6+2 \epsilon)}\|Q^j \phi\|_{L^2}
\end{multline}
where $C_\varepsilon$ changes from line to line.
Now subdivide the interval $[0,T]$ into $2^j$ subintervals $I_l$ such that  $|I_l|=2^{-j}T$
and notice that for $w_N=\partial v_N$ we have the new 
 equation
$$\partial_t w_N+\partial_x^3 w_N=\partial_x f_N.$$
where $f_N= 6\pi_N(|v_N|^2\partial v_N)$.
Let $I_l=[a_l,a_{l+1}]$ and use the Duhamel principle to write, for $t\in I_l$,
$$w_N(t)=W(t-a_l)(w_N(a_l))+\int_{a_l}^tW(t-s)\partial f_N(s)ds$$
so that 
\begin{equation}\label{DuHaMeL}Q^j(w_N(t))=W(t-a_l)(Q^j(w_N(a_l))+\int_{a_l}^tW(t-s)\partial Q^j f_N(s)ds.\end{equation}
By using the Minkowski inequality, 
\eqref{DuHaMeL} and \eqref{NKVPS} we get
\begin{multline*}\|Q^j w_N(t)\|_{L^1([0,T], L^\infty)}\leq \sum_{l=1}^{2^j} \|Q^j w_N(t)\|_{L^1(I_l,L^\infty)}
\\\leq C_\varepsilon \langle T \rangle \sum_{l=1}^{2^j} 
 |I_l|^{5/6} 
2^{j(1/6+2\epsilon)}\sup_{t\in [0,T]}\|Q^j w_N(t)\|_{L^2}
\\+C_\varepsilon \langle T \rangle |I_l|^{5/6} 
2^{j(1/6+2\epsilon)} 2^j \int_{0}^{T}\|Q^j f_N(s)\|_{L^2}ds,
\end{multline*}
so that for any given $j$ we get
\begin{multline*}
\|Q^j w_N(t)\|_{L^1([0,T], L^\infty)}\leq C_\epsilon \langle T\rangle T^{\frac{5}6}2^{j(1/3 +2\varepsilon) } \Big(\sup_{t\in [0,T]}\|Q^jw_N(t)\|_{L^2}\Big)
\\+C_\epsilon  \langle T\rangle T^{\frac{5}6}2^{j(1/3+2\varepsilon) }\int_{0}^{T}\|Q^j f_N(s)\|_{L^2}ds.\end{multline*}
Adding in $j$ we now obtain 
\begin{equation}\label{est6}\|w_N(t)\|_{L^1([0,T], L^\infty)}\leq C_\epsilon  \langle T\rangle T^{\frac{5}6} \sup_{t\in [0,T]}\|J^{\frac 13+3\varepsilon} w_N(t)\|_{L^2}
+C_\epsilon  \langle T\rangle T^{\frac{5}6} \int_{0}^{T}\|J^{\frac 13 + 3 \varepsilon}f_N(s)\|_{L^2}ds.\end{equation}
We now recall that $w_N=\partial v_N$ and  $f_N=6\Pi_N(|v_N|^2\partial v_N)$, 
we deduce from \eqref{est6} the bound
\begin{multline}\label{est7}\|\partial v_N(t)\|_{L^1([0,T], L^\infty)}\leq C_\epsilon  \langle T\rangle T^{\frac{5}6}
\big( \sup_{t\in [0,T]}\|J^{\frac 43 + 3 \varepsilon} v_N(t)\|_{L^2}\big)\\
+C_\epsilon  \langle T\rangle T^{\frac{5}6} \int_{0}^{T}\|J^{\frac 13 + 3 \varepsilon}(|v_N|^2\partial v_N)(s)\|_{L^2}ds.\end{multline}
Now we use  the  following Kato-Ponce type estimate
from  \cite{IK2007}:
\begin{equation}\label{KPE}\|J^{\sigma}(hg)-hJ^{\sigma}g\|_{L^2}\leq C\|J^{\sigma}h\|_{L^2}\|g\|_{L^\infty},\end{equation}
where $0<\sigma<1$ and $C>0$  depends on $\sigma$. By using this estimate,
where we take $\sigma=\frac 13 +3\varepsilon$, $h=\partial v_N$ and $g=|v_N|^2$, we get
\begin{multline}\label{est8}\|J^{\frac 13 + 3 \varepsilon}(|v_N|^2\partial v_N)\|_{L^2}\leq C \| \partial v_N J^{\frac 13 + 3 \varepsilon}(|v_N|^2) \|_{L^2}+ C
 \|J^{\frac 13 + 3 \varepsilon} (\partial v_N) \|_{L^2}\| v_N\|^2_{L^\infty}\\
 \leq C \| \partial v_N\|_{L^2} \|J^{\frac 13 + 3 \varepsilon}(|v_N|^2) \|_{L^\infty}+ C
 \|J^{\frac 13 + 3 \varepsilon} (\partial v_N) \|_{L^2} \| v_N \|^2_{L^\infty}\\
 \leq C \|J^{\frac 43} v_N\|_{L^2} \|J^{\frac 43}(|v_N|^2) \|_{L^2}+ C
 \|J^{\frac 43 + 3 \varepsilon} v_N \|_{L^2} \|J^{\frac 43} v_N\|_{L^2}^2\leq C  \|J^{\frac 43 + 3 \varepsilon} v_N \|_{L^2}^3,
 \end{multline}
 where we used the Sobolev embedding $H^\sigma\subset L^\infty$ for $\sigma>\frac 12$ and the fact that $H^\sigma$ is an algebra for $\sigma>\frac 12$.
 We conclude by combining \eqref{est7} with \eqref{est8}.
\end{proof}

\begin{proof}[Proof of Prop. \ref{cauchytheory}] As usual we denote $u_N=\Phi_N(t)u_0$.
By combining \eqref{energy} and \eqref{est2} we get
\begin{multline}\label{energ+disp}
\sup_{t\in [0,T]} \|J^s u_N(t)\|_{L^2}^2 \leq \|J^s u(0)\|_{L^2}^2  +
C  \langle T\rangle T^{\frac{5}6} \Big (\sup_{t\in [0,T]}\|J^s \Pi_N u_N(t)\|_{L^2}^4\Big) 
\\+C  \langle T\rangle T^{\frac{5}6} \Big (\sup_{t\in [0,T]}\|J^s \Pi_N u_N(t)\|_{L^2}^3\Big) \|J^s \Pi_N u_N(t)\|_{L^3([0,T],L^2)}^3
\\
\leq  \|J^s u(0)\|_{L^2}^2  +  C\langle T\rangle T^{\frac{5}6}
 \Big (\sup_{t\in [0,T]}\|J^s \Pi_N u_N(t)\|_{L^2}^4\Big) 
+C  \langle T\rangle T^{\frac{11}6} \Big (\sup_{t\in [0,T]}\|J^s \Pi_N u_N(t)\|_{L^2}^6\Big).
\end{multline}
We split the proof of \eqref{LRG} in two steps. We shall show the existence of $S^*\in (0,1)$  such that:
\begin{equation}\label{216}\sup_{\substack{u_0\in B_S^s, N\in \N\\|t|\leq 1}}
\|\Phi_N(t)u_0\|_{H^s}\leq 2S, \quad \forall S<S^*;\end{equation}
\begin{equation}\label{LRGbig}
\exists c,\beta>0 \hbox{ s.t. }  \sup_{\substack{u_0\in B_S^s, N\in \N\\|t|\leq c\langle S\rangle ^{-\beta}}}
\|\Phi_N(t)u_0\|_{H^s}\leq S+\frac 1S, \quad \forall S\geq S^*.\end{equation}
Notice that if we choose $c$ small enough then we get the inclusion
$\{|t|<c\langle S\rangle ^{-\beta}\}\subset [-1,1]$. Moreover for any $S\in (0,1)$ we have $2S<S+\frac 1S$. By combining these facts 
we conclude from \eqref{216} that 
\begin{equation}\label{sanpancr}\sup_{\substack{u_0\in B_S^s, N\in \N\\|t|\leq c\langle S\rangle ^{-\beta}}}
\|\Phi_N(t)u_0\|_{H^s}\leq S+\frac 1S, \quad \forall S<S^*.\end{equation}
Hence we get \eqref{LRG} by combining \eqref{sanpancr} with \eqref{LRGbig}.
In order to establish \eqref{216} we first introduce for any $S>0$ the set
\begin{equation*}
{\mathcal I}_{S}=\Big \{T \hbox{ s.t. } C \langle T\rangle |T|^{\frac{5}6}  (2S)^4+ C  \langle T\rangle |T|^{\frac{11}6}
(2S)^6\leq S^2\Big \}.
\end{equation*}
We claim that
\begin{equation}\label{Claim}\sup_{t\in {\mathcal I}_S} \|J^s u_N(t)\|_{L^2}<2S, \quad \forall u_0 \hbox{ s.t. } \|J^s u_0\|_{L^2}<S.
\end{equation}
Assume by contradiction  that there exist $u_0\in H^s$ such that $\|J^s u_0\|_{L^2}<S$ and $t^*\in {\mathcal I}_{S}$ with the property 
\begin{equation}\label{trivNPbasic}\|J^s u_N(t^*)\|_{L^2}=2S=\sup_{t\in [0, t^*]} \|J^s u_N(t)\|_{L^2}\end{equation} 
(to fulfill the conditions in \eqref{trivNPbasic} it is sufficient to select $t^*$ to be the first time that 
the value $2S$ is achieved by $\|J^s u_N(t)\|_{L^2}$).
Then by using \eqref{energ+disp} (with $T=t^*$) in conjunction with the definition of ${\mathcal I}_{S}$ we get
$$\|J^s u_N(t^*)\|_{L^2}^2 \leq S^2 + S^2$$
which is in contradiction with \eqref{trivNPbasic}.
We conclude \eqref{216} from \eqref{Claim},
since it it is easy to check the following inclusion
$${\mathcal I}_S\supseteq [-1,1], \quad \forall S\in [0, S^*],$$
provided that we choose $S^*>0$ small enough.
\\
Next, in order to prove \eqref{LRGbig}  we 
introduce the following set:
\begin{equation*}
{\mathcal K}_S=\Big \{T \hbox{ s.t. } C \langle T\rangle |T|^{\frac{5}6}  (S+ S^{-1})^4\\+ C  \langle T\rangle |T|^{\frac{11}6}
(S+ S^{-1})^6\leq S^{-2}\Big \}.
\end{equation*}
We claim that
\begin{equation}\label{Claim2}\sup_{t\in {\mathcal K}_{S}} \|J^s u_N(t)\|_{L^2}<S+S^{-1},  \quad \forall u_0 \hbox{ s.t. } \|J^s u_0\|_{L^2}<S.
\end{equation}
Arguing as above we assume by contradiction that there exists $u_0\in H^s$ with $\|J^s u_0\|_{L^2}<S$ and $t^*\in {\mathcal K}_{S}$ such that 
\begin{equation}\label{trivNP}\|J^s u_N(t^*)\|_{L^2}=S+S^{-1}=\sup_{t\in [0, t^*]} \|J^s u_N(t)\|_{L^2},\end{equation} 
then 
by using \eqref{energ+disp} in conjunction with the definition of ${\mathcal K}_S$ we get
$$\|J^s u_N(t^*)\|_{L^2}^2 \leq S^2 + S^{-2}<(S+S^{-1})^2$$
which is in contradiction with \eqref{trivNP}.
We conclude  \eqref{LRGbig} from \eqref{Claim2} if we show  that
$$\exists c, \beta>0 \hbox{ s.t. } [0, c\langle S \rangle^{-\beta}] \subset {\mathcal K}_{S}, \hbox{ for } S\geq S^*$$
where $S^*$ is already given in \eqref{216}.
In fact notice that by elementary considerations we have the inclusion
\begin{equation*}
\Big \{|T|<1 \hbox{ s.t. } 2\sqrt 2 C |T|^\frac 56
(S+ S^{-1})^6\leq S^{-2}\Big\}\subset
{\mathcal K}_{S}.
\end{equation*}
Hence we get 
$$\Big \{T \hbox{ s.t. } |T|\leq \min\{1, (2\sqrt 2 C)^{-\frac 65} S^{-\frac {12}5}(S+ S^{-1})^{-\frac{36}5}\} \Big \}\subset {\mathcal K}_{S}$$
and it is sufficient to notice that for suitable $c, \beta>0$ we have
$$c \langle S \rangle^{-\beta}\leq \min\Big \{1, (2\sqrt 2 C)^{-\frac 65} S^{-\frac {12}5}(S+ S^{-1})^{-\frac{36}5} \Big \}, \quad \forall S\geq S^*.$$

\end{proof}
 \subsection{Proof of Theorem \ref{Main}}
 
 We now address the question of existence and uniqueness of a local strong solution to \eqref{data}.  For convenience of the reader we are making this section self contained and we start by the definition of strong solution. 
 \begin{mydef}\label{defsol} We say that $u\in {\mathcal C}([-T,T], H^s), \, s>4/3$ is a strong solution of \eqref{data}  if
 \begin{equation}\label{duha}
 u(t)=W(t)u_0+ 6\int_0^t W(t-t')|u|^2\partial u(t')dt'
 \end{equation}
 for each $t\in[-T,T]$, where the equality holds in the sense of distributions.
 \end{mydef}

 We now  state a key lemma.
 \begin{lemma}\label{N Cauchy} 
Let $\{u_{N,0}\}$ be a sequence in $H^s$, with $s>\frac 43$, such that 
\begin{equation}\label{roscar}u_{N,0}\overset{N\rightarrow \infty} \longrightarrow u_0 \hbox{ in } L^2 \hbox{ and }
\sup_N \|u_{N,0}\|_{H^s}=S<\infty.\end{equation}
Then the sequence $\{\Phi_N(t)u_{N,0}\}$, of solutions to \eqref{mKdVN} with initial condition $u_{N,0}$, is a Cauchy sequence in 
${\mathcal C}([-T_0,T_0],L^2)$, where $T_0= c \langle S \rangle^{-\beta}$ and $c, \beta>0$ are as in Proposition \ref{cauchytheory}.
\end{lemma}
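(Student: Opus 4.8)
The plan is to run an $L^2$ energy estimate on the difference $w := \Phi_N(t)u_{N,0} - \Phi_M(t)u_{M,0}$ (assume $N\le M$; the other case is symmetric) and to close it by Gronwall's inequality in which the integrating factor stays finite \emph{uniformly in} $N,M$, thanks to the time-integrated $W^{1,\infty}$ bound of Lemma \ref{infinity} and the uniform $H^s$ bound of Proposition \ref{cauchytheory}. Write $u_N=\Phi_N(t)u_{N,0}$, $u_M=\Phi_M(t)u_{M,0}$, $v_N=\Pi_N u_N$, $v_M=\Pi_M u_M$, $F(v)=|v|^2\partial_x v$, and $\Pi_{(N,M]}:=\Pi_M-\Pi_N$. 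Subtracting the two copies of \eqref{mKdVN} gives $\partial_t w+\partial_x^3 w = 6\Pi_N F(v_N)-6\Pi_M F(v_M)$. To make the $L^2$ manipulations rigorous when $s<3/2$, I would first observe that $\Pi_{>N}u_N$ and $\Pi_{>M}u_M$ solve the linear equation $\partial_t v+\partial_x^3 v=0$, so that $\Pi_{>M}w(t)=W(t)\Pi_{>M}(u_{N,0}-u_{M,0})$ and hence $\|\Pi_{>M}w(t)\|_{L^2}=\|\Pi_{>M}(u_{N,0}-u_{M,0})\|_{L^2}$ for all $t$; it therefore suffices to estimate the trigonometric polynomial $\Pi_M w$, which solves $\partial_t \Pi_M w+\partial_x^3\Pi_M w = 6\Pi_N F(v_N)-6\Pi_M F(v_M)$.

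Pairing with $\overline{\Pi_M w}$ in $L^2(\T)$ and using $\Re\int \partial_x^3\Pi_M w\,\overline{\Pi_M w}=0$, the task reduces to bounding $\Re\int(\Pi_N F(v_N)-\Pi_M F(v_M))\overline{\Pi_M w}$. I would split it as $\Re\int \Pi_N(F(v_N)-F(v_M))\overline{\Pi_M w} - \Re\int\Pi_{(N,M]}F(v_M)\,\overline{\Pi_M w}$, use self-adjointness of the projections, and expand $F(v_N)-F(v_M)=|v_N|^2\partial_x z+(z\bar v_N+v_M\bar z)\partial_x v_M$ with $z:=v_N-v_M=\Pi_N w-\Pi_{(N,M]}u_M$. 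Three types of contributions appear. The genuinely delicate one is $\Re\int|v_N|^2\partial_x(\Pi_N w)\,\overline{\Pi_N w}$, where a derivative sits on the rough factor $\Pi_N w$ (controlled only in $L^2$): I would integrate by parts via $\Re\int a\,\partial_x b\,\bar b=-\tfrac12\int(\partial_x a)|b|^2$ to move the derivative onto $a=|v_N|^2$, getting a bound by $\|v_N\|_{W^{1,\infty}}\|v_N\|_{L^\infty}\|\Pi_M w\|_{L^2}^2$. The second type are the terms with $\partial_x v_M$ undifferentiated, bounded by $(\|v_N\|_{L^\infty}+\|v_M\|_{L^\infty})\|v_M\|_{W^{1,\infty}}\|z\|_{L^2}\|\Pi_M w\|_{L^2}$. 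The third type are ``error'' pieces in which a derivative falls on the high-frequency projection $\Pi_{(N,M]}u_M$, or in which $\Pi_{(N,M]}F(v_M)$ is paired against $\Pi_{(N,M]}w$; for these I would use the elementary facts that for $|t|\le T_0$ one has $\|\Pi_{(N,M]}u_M(t)\|_{L^2}\lesssim N^{-s}\|u_M(t)\|_{H^s}$, $\|\partial_x\Pi_{(N,M]}u_M(t)\|_{L^2}\lesssim N^{-(s-1)}\|u_M(t)\|_{H^s}$ (admissible since $s-1>1/3>0$), and $\|\Pi_{(N,M]}w(t)\|_{L^2}\le \|\Pi_{(N,M]}u_N(t)\|_{L^2}+\|\Pi_{(N,M]}u_M(t)\|_{L^2}\lesssim N^{-s}(\|u_{N,0}\|_{H^s}+\|u_M(t)\|_{H^s})$, the last because the high-frequency part of $u_N$ evolves linearly and so keeps the $H^s$ size of the data.

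Bounding all the $L^\infty$ and $H^s$ norms of $v_N,v_M,u_M$ that occur by $C_S:=C(S+S^{-1})$ via Proposition \ref{cauchytheory}, and collecting the vanishing factors into $\varepsilon_{N,M}:=C_S\,(\min(N,M))^{-(s-1)}$, I expect to reach a differential inequality $\frac{d}{dt}\|\Pi_M w(t)\|_{L^2}^2\le A_{N,M}(t)\|\Pi_M w(t)\|_{L^2}^2+B_{N,M}(t)$, where $A_{N,M}(t)\lesssim_S 1+\|v_N(t)\|_{W^{1,\infty}}+\|v_M(t)\|_{W^{1,\infty}}$ and $\int_{-T_0}^{T_0}B_{N,M}(t)\,dt\lesssim_S \varepsilon_{N,M}\to 0$. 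What makes Gronwall close uniformly is that, by Lemma \ref{infinity} together with the uniform bound $\sup_{|t|\le T_0}\|J^s v_N(t)\|_{L^2}\le \|u_N\|_{\mathcal C([-T_0,T_0],H^s)}\le S+S^{-1}$ from Proposition \ref{cauchytheory} (and likewise for $v_M$), one has $\int_{-T_0}^{T_0}A_{N,M}(t)\,dt\le C(S)$ independently of $N,M$. Gronwall then gives $\sup_{|t|\le T_0}\|\Pi_M w(t)\|_{L^2}^2\le C(S)\big(\|u_{N,0}-u_{M,0}\|_{L^2}^2+\varepsilon_{N,M}\big)$, and adding the linearly-evolving tail $\sup_{|t|\le T_0}\|\Pi_{>M}w(t)\|_{L^2}\le\|u_{N,0}-u_{M,0}\|_{L^2}$ yields $\sup_{|t|\le T_0}\|w(t)\|_{L^2}^2\le C(S)\big(\|u_{N,0}-u_{M,0}\|_{L^2}^2+\varepsilon_{N,M}\big)$, which by \eqref{roscar} tends to $0$ as $N,M\to\infty$ — exactly the Cauchy property in $\mathcal C([-T_0,T_0],L^2)$.

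I expect the main obstacle to be precisely the single term where the derivative cannot be integrated by parts away onto a smooth factor, handled through the identity $\Re\int a\,\partial_x b\,\bar b=-\tfrac12\int(\partial_x a)|b|^2$ at the price of needing the $L^1_tW^{1,\infty}_x$ control of $v_N$ and $v_M$; this is the structural reason the argument is confined to the short interval $[-T_0,T_0]$ supplied by Proposition \ref{cauchytheory}, and it is also where the restriction $s>4/3$ inherited from Lemma \ref{infinity} enters.
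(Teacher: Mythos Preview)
Your argument is correct and follows essentially the same route as the paper: separate off the linearly evolving high-frequency tail, run an $L^2$ energy estimate on the remaining finite-frequency difference, handle the single dangerous term $\Re\int |v_N|^2\partial_x(\Pi_N w)\overline{\Pi_N w}$ by the integration-by-parts identity, and close using the uniform $H^s$ bound of Proposition~\ref{cauchytheory} together with the $L^1_tW^{1,\infty}_x$ control of Lemma~\ref{infinity}. The only cosmetic differences are that the paper works directly with $v_N-v_M$ rather than $\Pi_M w$, and closes by iterating over short subintervals on which the coefficient can be absorbed, instead of invoking Gronwall once on $[-T_0,T_0]$; both packagings are equivalent here.
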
 

\begin{proof} 
We have the following splitting:
$$\Phi_N(t)u_{N,0}=\Pi_N \Phi_N(t)u_{N,0}+\Pi_{>N} W(t)u_{N,0},$$ where $W(t)$ is the propagator associated with the linear KdV.
Next assume $N\leq M$, then we get
\begin{multline}\label{piNM}\|\Pi_{>M} W(t)u_{M,0}-\Pi_{>N} W(t)u_{N,0}\|_{L^2}\leq
\|\Pi_{>M} W(t)u_{M,0}-\Pi_{>M} W(t)u_{N,0}\|_{L^2} \\+ 
\|\Pi_{>M} W(t)u_{N,0}-\Pi_{>N} W(t)u_{N,0}\|_{L^2} \leq \|u_{M,0}-u_{N,0}\|_{L^2} + 2
\|\Pi_{>N} u_{N,0}\|_{L^2}\\\leq \|u_{M,0}-u_{N,0}\|_{L^2} +
2 N^{-s}\|\Pi_{>N} u_{N,0}\|_{H^s} .\end{multline}
By combining this estimate with the assumptions \eqref{roscar}, we get
$\Pi_{>N} W(t)u_{N,0}$ is a Cauchy sequence in ${\mathcal C} ([-T_0,  T_0], L^2)$.
Hence it is sufficient to show that $v_N(t)=\Pi_N \Phi_N(t)u_{N,0}$ is a Cauchy sequence in  
${\mathcal C} ([-T_0,  T_0], L^2)$. Notice that $v_N(t)$ solves the Cauchy problem \eqref{mKdVN} with initial datum $\Pi_N u_{N,0}$. 
Then from the equation solved by $u_N-u_M$  we calculate:
\begin{multline}\label{I123} 
\frac{d}{dt} \|v_N - v_M \|_{L^2}^2 
= 12\Re \int \big[ \Pi_N \left( |v_N|^2 \partial v_N \right)  - \Pi_M \left( |v_M|^2 \partial  v_M \right)  \big] 
\overline{( v_N - v_M)} \\
=  12\Re \int \big[ \left( \Pi_N - \Pi_M \right)  \left( |v_N|^2 \partial v_N \right) (\overline{ v_N - v_M})\\
+ 12\Re \int \Pi_M  \left( |v_N|^2 \partial v_N - |v_M|^2 \partial v_M \right) (\Pi_N - \Pi_M)(\overline{v_N - v_M}) \\
+ 12\Re \int \Pi_M  \left( |v_N|^2 \partial v_N - |v_M|^2 \partial v_M \right) \Pi_M (\overline{v_N - v_M})\
=  I+ II +III.
\end{multline} 
 Due to the Sobolev embedding $H^s\subset L^\infty$, we have the following bound for the term $I$:
\begin{multline}\label{estimateI}
|I|\leq C \|v_N\|_{L^\infty}^2 \|\partial v_N\|_{L^2} 
\|\left( \Pi_N - \Pi_M \right)(v_N-v_M)\|_{L^2}
\\\leq C 
\|v_N\|_{H^s}^2 \|\partial v_N\|_{L^2} N^{-s}  (\|v_N\|_{H^s}+\|v_M\|_{H^s}).
\end{multline}
The term $II$ satisfies a similar estimate
\begin{equation}\label{estimateII}
|II|\leq C 
(\|v_N\|_{H^s}^2 \|\partial v_N\|_{L^2} +
\|v_M\|_{H^s}^2 \|\partial v_M\|_{L^2} )N^{-s}  (\|v_N\|_{H^s}+\|v_M\|_{H^s}).
\end{equation}
Concerning $III$ we have
\begin{multline*} 
III  = 12 \Re \int  \big( \left( |v_N|^2 - |v_M|^2 \right) \partial v_N   +  |v_M|^2 \partial (v_N - v_M) \big )\Pi_M (\overline{v_N - v_M})\\
= 12\Re \int  \big( \big(v_N(\overline{v_N - v_M}) + (v_N - v_M) \overline{v_M}  \big) \partial v_N \big ) \Pi_M (\overline{v_N - v_M})\\
+ 12\Re \int  \big( |v_M|^2 \partial (v_N - v_M) \big ) \Pi_M (\overline{v_N - v_M}).
\end{multline*}
and since we have $(v_N - v_M) =\Pi_M (v_N - v_M)$, we get by integration by parts on the second term on the right hand side the formula
\begin{multline*} 
III  = 12 \Re \int  \big( \big(v_N(\overline{v_N - v_M}) + (v_N - v_M) \overline{v_M}  \big) \partial v_N \big ) \Pi_M (\overline{v_N - v_M})
- 6 \int \partial (|v_M|^2) |\Pi_M (v_N - v_M)|^2.
\end{multline*} 
Hence we get 
\begin{multline}\label{treIII}
|III|\leq  C\|v_N - v_M\|_{L^2}^2 \left( \|v_N\|_{L^{\infty}} +  \|v_M\|_{L^{\infty}} \right) (\|\partial v_N\|_{L^{\infty}}
+\|\partial v_M\|_{L^{\infty}})\\\leq C\|v_N - v_M\|_{L^2}^2 \left( \|v_N\|_{H^{s}} +  \|v_M\|_{H^{s}} \right) (\|\partial v_N\|_{L^{\infty}}
+\|\partial v_M\|_{L^{\infty}}),\end{multline}where we used again the embedding $H^s\subset L^\infty$.
Notice that by using Proposition \ref{cauchytheory} we deduce
\begin{equation}\label{SpiuS-1}\sup_{\substack{N\in \N\\|t|<c\langle S \rangle^{-\beta}}} \|\Phi_N(t)u_{N,0}\|_{H^s}\leq S+S^{-1}.\end{equation}
Next we fix any time interval $[a,b]$  contained in $\{|t|\leq c\langle S\rangle^{-\beta}\}$, we integrate 
\eqref{I123} on $[a,b]$ and by using \eqref{estimateI}, \eqref{estimateII} and \eqref{treIII}, we get:
\begin{multline*}
\sup_{t\in [a,b]} \|v_N(t) - v_M(t) \|_{L^2}^2 \leq \|v_N(a)-v_M(a)\|_{L^2}^2+C(b-a) N^{-s} (S +S^{-1})^4\\
+C  (S+S^{-1}) \sup_{t\in [a,b]}  \|v_N(t)-v_M(t)\|_{L^2}^2 \int_a^{b} (\|\partial v_N(\tau)\|_{L^{\infty}} + \|\partial v_M(\tau)\|_{L^{\infty}})d\tau,\\
\end{multline*}
where we used \eqref{SpiuS-1}. Next by using Lemma \ref{infinity} and \eqref{SpiuS-1}, we get 
\begin{multline}\label{tragaci}
\sup_{t\in [a,b]} \|v_N(t) - v_M(t) \|_{L^2}^2 \leq \|v_N(a)-v_M(a)\|_{L^2}^2+C(b-a) N^{-s} (S +S^{-1})^4\\
C  \Big(\langle b-a\rangle |b-a|^{\frac{5}6} (S+S^{-1})
 +  \langle b-a\rangle |b-a|^{\frac{11}6} (S+S^{-1})^3\Big)\sup_{t\in [a,b]} \|v_N(t) - v_M(t) \|_{L^2}^2.
\end{multline}
We choose now $[a,b]=[0,c]$ with $c>0$ small in such a way that we can absorb in \eqref{tragaci} all the terms in the right hand side  (except the first  and second one) in the left hand side and we get
\begin{equation}\label{gigan}\frac 12 \sup_{t\in [0,c]} \|v_N(t) - v_M(t) \|_{L^2}^2 \leq \|v_N(0)-v_M(0)\|_{L^2}^2+C N^{-s} (S +S^{-1})^4.
\end{equation}
By using \eqref{roscar} we get that $v_N(0)=\Pi_N u_{N,0}$ is a Cauchy sequence in $L^2$, hence by using \eqref{gigan} we obtain
$v_N$ is a Cauchy sequence in ${\mathcal C}([0,c], L^2)$.
In particular $v_N(c)$ converges in $L^2$. Hence we can iterate the argument above on the interval $[c,2c]$
(recall that we used along our computations \eqref{SpiuS-1}, which is satisfied on the whole interval $[-T_0, T_0]$
and in particular on $[c, 2c]$). Hence we get $v_N$ is a Cauchy sequence in ${\mathcal C}([c,2c], L^2)$. The same argument can be repeated
$[\frac{T_0}c]+1$ times and we get that 
 $v_N$ is a Cauchy sequence in ${\mathcal C}([0,T_0], L^2)$. By a similar argument we get
 that  $v_N$ is a Cauchy sequence in ${\mathcal C}([-T_0,0], L^2)$ and we conclude.

\end{proof} 

We are now ready to prove the existence of a local strong solution to \eqref{data}.
\begin{prop}[Existence] \label{esistsss}
Let $s>\frac 43$ be given. Then for every $u_0\in H^s$ there exists at least one strong solution 
$u\in {\mathcal C}([-c\langle \|u_0\|_{H^s}\rangle^{-\beta}, c\langle \|u_0\|_{H^s}\rangle^{-\beta}],H^s)$ to \eqref{data},
where $c,\beta>0$ are the same constants of Proposition \ref{cauchytheory}.
\end{prop}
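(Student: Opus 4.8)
The plan is to obtain the solution as a limit of the finite dimensional approximations $\Phi_N(t)u_{N,0}$, exploiting the uniform bounds of Proposition \ref{cauchytheory} and the Cauchy property of Lemma \ref{N Cauchy}. Set $S=\|u_0\|_{H^s}$ and $T_0=c\langle S\rangle^{-\beta}$, with $c,\beta$ from Proposition \ref{cauchytheory}, and take as approximating data $u_{N,0}:=\Pi_N u_0$, so that $\|u_{N,0}\|_{H^s}\le S$ for every $N$ and $u_{N,0}\to u_0$ in $H^s$, in particular in $L^2$. Let $u_N:=\Phi_N(\cdot)u_{N,0}$ be the corresponding global solution of \eqref{mKdVN}; note that with this choice $u_N$ remains spectrally supported in $\{|k|\le N\}$, so $\Pi_N u_N=u_N$. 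Proposition \ref{cauchytheory} yields the uniform bound $\sup_{N}\sup_{|t|\le T_0}\|u_N(t)\|_{H^s}\le S+S^{-1}$, while Lemma \ref{N Cauchy} gives that $\{u_N\}$ is Cauchy in $\mathcal C([-T_0,T_0],L^2)$; call $u$ its limit in that space.

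Next I would upgrade the convergence by interpolation: for every $\sigma\in(4/3,s)$,
\[
\|u_N(t)-u_M(t)\|_{H^\sigma}\le\|u_N(t)-u_M(t)\|_{L^2}^{1-\sigma/s}\big(\|u_N(t)\|_{H^s}+\|u_M(t)\|_{H^s}\big)^{\sigma/s},
\]
so $\{u_N\}$ is Cauchy in $\mathcal C([-T_0,T_0],H^\sigma)$, whence $u\in\mathcal C([-T_0,T_0],H^\sigma)$ for every such $\sigma$; moreover, by weak lower semicontinuity of $\|\cdot\|_{H^s}$, one gets $u(t)\in H^s$ with $\|u(t)\|_{H^s}\le S+S^{-1}$ for all $|t|\le T_0$, i.e. $u\in L^\infty([-T_0,T_0],H^s)$. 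To see that $u$ is a strong solution in the sense of Definition \ref{defsol}, I would pass to the limit in the Duhamel identity
\[
u_N(t)=W(t)u_{N,0}+6\int_0^t W(t-t')\,\Pi_N\!\big(|u_N|^2\partial_x u_N\big)(t')\,dt'.
\]
Since $\sigma>1$, the map $v\mapsto|v|^2\partial_x v$ is locally Lipschitz from $H^\sigma$ into $L^2$, so $|u_N|^2\partial_x u_N\to|u|^2\partial_x u$ in $\mathcal C([-T_0,T_0],L^2)$; combined with $\Pi_N\to I$ strongly and the fact that $W$ is unitary on $L^2$, the right-hand side converges in $\mathcal C([-T_0,T_0],L^2)$, and $u$ satisfies \eqref{duha} in the sense of distributions.

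The remaining and \emph{most delicate} point is to promote $u\in L^\infty([-T_0,T_0],H^s)\cap\mathcal C([-T_0,T_0],H^\sigma)$ to $u\in\mathcal C([-T_0,T_0],H^s)$. The scheme is classical: $u$ is weakly continuous into $H^s$ (from strong continuity in $H^\sigma$ plus the uniform $H^s$ bound), so it suffices to prove that $t\mapsto\|u(t)\|_{H^s}$ is continuous, since in a Hilbert space weak continuity together with continuity of the norm implies strong continuity. For the continuity of the norm, one first passes the uniform dispersive estimate of Lemma \ref{infinity} to the limit to obtain $\int_{-T_0}^{T_0}\|\partial_x u(t)\|_{L^\infty}\,dt<\infty$, and then derives the energy identity
\[
\|J^s u(t)\|_{L^2}^2=\|J^s u_0\|_{L^2}^2+\int_0^t g(\tau)\,d\tau,\qquad|g(\tau)|\le C\|J^s u(\tau)\|_{L^2}^3\|\partial_x u(\tau)\|_{L^\infty},
\]
by applying the Kato--Ponce commutator estimate of Lemma \ref{lemma-energy} (which only needs $s>\frac12$) to a regularization of $u$ and passing to the limit; since $\|J^s u\|_{L^2}\in L^\infty_t$ and $\|\partial_x u\|_{L^\infty}\in L^1_t$, the integrand $g$ lies in $L^1([-T_0,T_0])$, so $t\mapsto\|u(t)\|_{H^s}^2$ is absolutely continuous, in particular continuous. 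The \textbf{main obstacle} is exactly this last step: because $H^s\not\hookrightarrow W^{1,\infty}$ for $s<\frac32$, the energy identity for the not-yet-smooth limit $u$ cannot be obtained by Sobolev embedding alone — it genuinely needs the Strichartz-type gain of Lemma \ref{infinity} — and the integration by parts performed in the proof of Lemma \ref{lemma-energy} must be legitimized for $u$ through a careful mollification/truncation argument.
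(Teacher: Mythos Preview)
Your construction of the solution---approximation by $\Phi_N$, uniform $H^s$ bounds from Proposition \ref{cauchytheory}, the Cauchy property in $L^2$ from Lemma \ref{N Cauchy}, interpolation to $H^\sigma$, and passage to the limit in the Duhamel formula---matches the paper's argument, with the harmless variant that you take $u_{N,0}=\Pi_N u_0$ while the paper takes $u_{N,0}=u_0$.

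The difference lies in how you obtain strong $H^s$ continuity. You propose to first establish $\partial_x u\in L^1_tL^\infty_x$ for the limit $u$ and then derive an energy \emph{identity} for $u$ itself through mollification, yielding absolute continuity of $t\mapsto\|u(t)\|_{H^s}^2$. As you correctly flag, justifying the Kato--Ponce commutator step and the integration by parts at the level of the limit $u$ is the delicate point. The paper sidesteps this entirely: it keeps the energy and dispersive estimates at the $u_N$ level (where Lemmas \ref{lemma-energy} and \ref{infinity} already give them), integrates them to obtain
\[
\|u_N(t_n)\|_{H^s}^2\le\|u_0\|_{H^s}^2+C|t_n|^\gamma,
\]
and then uses only weak lower semicontinuity, $\|u(t_n)\|_{H^s}\le\liminf_N\|u_N(t_n)\|_{H^s}$, to transfer this bound to $u$. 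This yields the $\limsup$ half of norm continuity at $t=0$; the $\liminf$ half comes from weak continuity. For a general time $\bar t$ one simply repeats the construction with new approximants $w_N$ solving \eqref{mKdVN} with initial datum $u(\bar t)$. Your route is workable but requires the additional regularization machinery you describe; the paper's route is shorter and needs nothing beyond what has already been proved for the approximants.
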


\begin{proof}
We consider the sequence $u_N$ of solutions to \eqref{mKdVN}, with initial condition $u_0$ independent of $N$.
Notice that by Proposition \ref{cauchytheory} we get 
\begin{equation}\label{hsbounded}\sup_{|t|\leq c\langle \|u_0\|_{H^s} \rangle^{-\beta}}  \|u_N(t)\|_{H^s}\leq \|u_0\|_{H^s}+\|u_0\|_{H^s}^{-1}
\end{equation}
and by Lemma \ref{N Cauchy}, if we set $T= c\langle \|u_0\|_{H^s}\rangle^{-\beta}$ we have that 
there exists $u\in {\mathcal C}([-T, T], L^2)$ such that
\begin{equation}\label{l2converg}
u_N\overset{N\rightarrow \infty} \longrightarrow u \hbox{ in } {\mathcal C}([-T, T],L^2).\end{equation}
By combining \eqref{hsbounded} and \eqref{l2converg} with an interpolation argument, we get
\begin{equation}\label{hsprimeconverg}
u_N\overset{N\rightarrow \infty} \longrightarrow u \hbox{ in } {\mathcal C}([-T, T],H^{s'}),
\quad \forall \, 0 \leq s'<s.\end{equation}
Now pick $4/3<s'<s$. The integral equation like \eqref{duha} for $u_N$ reads
\begin{equation}\label{duHAme}u_N(t)=W(t)u_0+6 \int_0^tW(t-t')\Pi_N(|\Pi_N u_N|^2\partial \Pi_N u_N(t'))dt',\end{equation}
and using \eqref{hsprimeconverg} it is easy to see that, since $s'>4/3$, we obtain 
$$\Pi_N(|\Pi_N u_N|^2\partial \Pi_N u_N\overset{N\rightarrow \infty} \longrightarrow  |u|^2\partial u 
\hbox{ in } {\mathcal C}([-T, T],L^2).$$
By passing to the limit in \eqref{duHAme} as $N\rightarrow \infty$, we get 
\eqref{duha} for each $t$, in $L^2$, and hence $u$ is a strong solution. 
Once we have this statement we can use the standard Bona-Smith argument in \cite{BS75} to conclude 
that $u\in {\mathcal C}([-T, T],H^s)$. But as promised at the beginning of the section we want to keep the presentation self contained, so we will prove directly the continuity in $H^s$ proving three claims.

\bigskip

{\bf Claim  1:} We have $u\in {\mathcal C}_w([-T, T],H^s)$, namely it $t_n \overset{n\rightarrow \infty} \longrightarrow  \bar t$ then 
$u(t_n) \overset{n\rightarrow \infty} \rightharpoonup u(\bar t)$ in $H^s$. 
\\

To prove this claim we first observe that, as a consequence of \eqref{hsprimeconverg} we have $u\in {\mathcal C}([-T, T],H^{s'})$, hence we have $u(t_n) \overset{n\rightarrow \infty} \longrightarrow u(\bar t)$ strongly in $H^{s'},\, \, 4/3<s'<s$. Moreover thanks to \eqref{hsbounded} we have that $\sup_n \|u(t_n)\|_{H^s}<\infty$. As a consequence there is a subsequence $t_{n_k}$ such that $u(t_{n_k}) \overset{k\rightarrow \infty} \rightharpoonup \bar u$ in $H^s$, and by uniqueness of the limit in the sense of distribution we have $u(\bar t)=\bar u$. Claim 1 is then proved.

\bigskip

{\bf Claim  2:} $u(t)$ is continuous at $t=0$ in the strong $H^s$ topology.
\\

To prove this claim we let $t_n\overset{n\rightarrow \infty} \longrightarrow 0$ and we want to show, after invoking Claim 1,  that
$$\|u(t_n)\|_{H^s}\overset{n\rightarrow \infty} \longrightarrow \|u(0)\|_{H^s}.$$
By Claim 1 and semicontinuity of the $H^s$ norm we obtain
$$\|u(0)\|_{H^s}\leq \liminf_{n\rightarrow \infty} \|u(t_n)\|_{H^s}.$$
For the other direction we use the energy estimate, from  Lemma \ref{lemma-energy}:
$$\left|\frac{d}{dt} \|u_N(t)\|_{H^s}^2\right|\leq C\|u_N(t)\|_{H^s}^3\|\Pi_N u_N(t)\|_{W^{1,\infty}},$$
and after integration in time
$$\Big| \|u_N(t_n)\|_{H^s}^2-\|u_N(0)\|_{H^s}^2\Big|\leq C  \|u_N(t)\|_{L^\infty([-T,T];H^s)}^3\int_0^{t_n}  
\|\Pi_N u_N(t)\|_{W^{1,\infty}}\,dt$$
so that from \eqref{hsbounded} 
$$\Big| \|u_N(t_n)\|_{H^s}^2-\|u_N(0)\|_{H^s}^2\Big |\leq C\int_0^{t_n}  \|\Pi_N u_N(t)\|_{W^{1,\infty}}\,dt.$$
Hence  we have 
\begin{equation*}
\|u_N(t_n)\|_{H^s}^2\leq \|u(0)\|_{H^s}^2+C\int_0^{t_n}  \|\Pi_N u_N(t)\|_{W^{1,\infty}}\,dt\\
\end{equation*}
and using Lemma \ref{infinity} and \eqref{hsbounded}
\begin{equation}\label{good1}
\|u_N(t_n)\|_{H^s}^2\leq \|u(0)\|_{H^s}^2+C|t_{n}|^\gamma,
\end{equation} 
for some $\gamma>0$. We now observe that since $u_N(t)$ converges strongly to $u(t)$ in $H^{s'}, \, s'<s,$ then $u_N(t)$ converges weakly  to $u(t)$ in $H^s$, and so, thanks to    to \eqref{good1}, we have 
\begin{eqnarray*}\|u(t_n)\|_{H^s}^2&\leq& \liminf_{N\rightarrow \infty} \|u_N(t_n)\|_{H^s}^2\\
&\leq &\liminf_{N\rightarrow \infty} (C|t_n|^\gamma +\|u(0)\|_{H^s}^2)= C|t_n|^\gamma +\|u(0)\|_{H^s}^2,
\end{eqnarray*}
and from here 
$$\limsup_{n\rightarrow \infty}\|u(t_n)\|_{H^s}^2\leq \|u(0)\|_{H^s}^2.$$

\bigskip

{\bf Claim  3:}  $u(t)$ is continuous in the $H^s$ topology for any $t\in [-T,T]$. \\

To show this claim one replace the initial data of the truncated initial value problem \eqref{mKdVN} with
$u_N(\bar t)=u(\bar t)$. We call $w_N$ the solution of this new Cauchy problem.  By repeating 
for  $w_N$ and $u$  at time $\bar t$, what we did for $u_N$ and $u$ at time zero, we obtain continuity for any $\bar t \in
[-T,T]$, and this proves the claim.

\end{proof}

Next we show the uniqueness.
 \begin{prop}[Uniqueness] \label{prop-uniq} Let $s>\frac 43$, $T>0$ and $u, \, v\in {\mathcal C}([-T,T], H^s)$ be two strong solutions of \eqref{data}.
 Then $u(t)=v(t)$ for every $t\in [-T, T]$.
 \end{prop}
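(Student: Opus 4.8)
\textbf{Proof proposal for Proposition \ref{prop-uniq}.}

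The plan is to derive a closed differential inequality for $\|u(t)-v(t)\|_{L^2}^2$ and conclude by Gronwall. First I would set $w=u-v$, which by \eqref{duha} solves, in the distributional sense, $\partial_t w+\partial_x^3 w = 6(|u|^2\partial u - |v|^2\partial v)$. Writing $|u|^2\partial u - |v|^2\partial v = (|u|^2-|v|^2)\partial u + |v|^2\partial w$ and $|u|^2-|v|^2 = u\bar w + w\bar v$, one gets a forcing term that is linear in $w$ with coefficients controlled by $u,v$ and their first derivatives. Since $u,v\in {\mathcal C}([-T,T],H^s)$ with $s>\tfrac43>\tfrac32-\ldots$, actually $s>1$, we have $u,v\in L^\infty_t W^{1,\infty}_x$ by the Sobolev embedding $H^{s-1}\subset L^\infty$ only when $s>\tfrac32$; at the regularity $s>\tfrac43$ one only controls $\partial u,\partial v$ in $L^1_t L^\infty_x$ via Lemma \ref{infinity} (or rather its analogue for the limit equation, obtained by passing to the limit in \eqref{est7}–\eqref{est8}). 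So the natural quantity to estimate is $\sup_{t}\|w(t)\|_{L^2}^2$ against $\int_0^t(\|\partial u\|_{L^\infty}+\|\partial v\|_{L^\infty})\,d\tau$, exactly as in the proof of Lemma \ref{N Cauchy}.

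The key computation is to justify, for strong (distributional) solutions, the energy identity
\[
\frac{d}{dt}\|w(t)\|_{L^2}^2 = 12\,\Re\int \big(|u|^2\partial u-|v|^2\partial v\big)\,\overline{w}\,dx,
\]
and then to bound the right-hand side. Expanding as above,
\[
\Re\int\big(|u|^2\partial u-|v|^2\partial v\big)\overline w
= \Re\int (u\bar w+w\bar v)\,\partial u\,\overline w \;+\; \Re\int |v|^2\,\partial w\,\overline w,
\]
and the last integral equals $-\tfrac12\int \partial(|v|^2)\,|w|^2$ after integration by parts (here one uses $w=u-v$ has enough regularity, $w\in {\mathcal C}([-T,T],H^s)$ with $s>1$, so $|w|^2\in W^{1,1}$ and the boundary term vanishes on $\T$). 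Hence
\[
\left|\frac{d}{dt}\|w(t)\|_{L^2}^2\right| \le C\,\|w(t)\|_{L^2}^2\,\big(\|u(t)\|_{L^\infty}+\|v(t)\|_{L^\infty}\big)\big(\|\partial u(t)\|_{L^\infty}+\|\partial v(t)\|_{L^\infty}\big),
\]
and since $u,v\in{\mathcal C}([-T,T],H^s)\subset{\mathcal C}([-T,T],L^\infty)$ are bounded, and $\int_{-T}^T(\|\partial u\|_{L^\infty}+\|\partial v\|_{L^\infty})\,d\tau<\infty$ by Lemma \ref{infinity}, Gronwall's inequality with $w(0)=0$ forces $w\equiv 0$ on $[-T,T]$.

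The main obstacle is the rigorous justification of the energy identity and the integration by parts at the low regularity $s>\tfrac43$: $u,v$ are only \emph{strong} solutions in the sense of Definition \ref{defsol}, i.e. they satisfy the Duhamel formula distributionally, so $\partial_t u$ is not classically defined and one cannot naively multiply the equation by $\bar u$. The standard remedy is to mollify: apply a spatial smoothing $S_\varepsilon$ (e.g. $\Pi_N$ or a Fourier multiplier), write the equation for $S_\varepsilon w$, pair with $\overline{S_\varepsilon w}$ (now legitimate since $S_\varepsilon w$ is smooth in $x$ and $C^1$ in $t$ by the Duhamel formula), estimate the resulting commutator $[S_\varepsilon,|u|^2\partial](\cdot)$ terms using that $u,v\in L^\infty_t H^{s'}_x$ for $s'>\tfrac43$ together with the Kato–Ponce estimates \eqref{KPE}, and let $\varepsilon\to 0$; this is precisely the structure already carried out for the finite-dimensional flows in Lemma \ref{N Cauchy}, so the uniqueness proof largely amounts to redoing that argument in the limit with $\Pi_N$ playing the role of the mollifier. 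Once the commutators are shown to vanish in the limit, the differential inequality above holds and Gronwall closes the argument.
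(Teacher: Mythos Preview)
Your proposal is correct and matches the paper's proof: mollify to justify the $L^2$ energy identity for $u-v$, expand the nonlinearity exactly as you do (including the integration by parts on $|v|^2\partial w\,\bar w$), and close via $\partial u,\partial v\in L^1_tL^\infty_x$ together with an absorption/Gronwall argument. The only point where the paper is more careful is establishing $\partial u\in L^1_tL^\infty_x$ for a given \emph{strong} solution---Lemma~\ref{infinity} is stated only for the truncated flow, so the paper reproves it for a mollified $u_\varepsilon$ uniformly in $\varepsilon$ and then passes to the limit via a duality/Fatou argument; also, no commutator estimates are actually needed in the mollification step, since applying $\theta_\varepsilon$ to the Duhamel formula gives $\partial_t u_\varepsilon+\partial_x^3 u_\varepsilon=6\theta_\varepsilon(|u|^2\partial u)$ with forcing already in $C_tL^2_x$, and one simply lets $\varepsilon\to0$ in the time-integrated identity.
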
  
 \begin{proof}  
 Let $u$ be a strong solution of  the initial value problem \eqref{data}. Let $\theta$ be a smooth and compactly supported function on $\R$, then we define on $\T$ for $\varepsilon\in (0,1)$ the functions $\theta_\varepsilon$ such that 
 $\hat {\theta_\epsilon}(n)=\hat\theta(\epsilon n)$ and $\hat \theta(0)=1$. We abuse notation and call $\theta_\epsilon$ the 
 associated multiplier operator. Then if we define $u_\epsilon(t):=\theta_\epsilon u(t) $ we can write
 $$ u_\epsilon(t)=W(t)\theta_\epsilon u_0+6 \int_0^t W(t-t')\theta_\epsilon (|u|^2\partial u)(t')dt'$$
 in the pointwise sense.  Hence we have
 \begin{equation}\label{dataep}
\begin{cases} 
\partial_t u_\epsilon+\partial_x^3 u_\epsilon= 6\theta_\epsilon(|u|^2\partial_x u), \quad (t,x)\in \R\times \T, \quad u(t,x)\in \C\\
u_\epsilon(0)=\theta_\epsilon u_0,
\end{cases}
\end{equation}
 By repeating the proof of Lemma \ref{infinity} for the initial value problem \eqref{dataep}, we now have that 
 \begin{equation}\label{bound2}
\sup_{\varepsilon}  \|\partial u_\epsilon\|_{L^1([-T,T],L^\infty)}=C<\infty,
 \end{equation} 
where $C$ is independent of $\epsilon$, if $T$ is small enough.  
 We also note that $u_\epsilon$ converges to $u$ in $ {\mathcal C}([-T, T],H^{s})$ as $\epsilon\rightarrow 0$. We claim that 
\begin{equation}\label{bound3}
 \|\partial u\|_{L^1([-T,T],L^\infty)}\leq C,
 \end{equation} 
 as well. To prove this claim we fix  $p\geq 1$ and $g$ such that $\|g\|_{L^\infty([-T,T],L^{p'}_x)}\leq 1$, where $p'$ is the conjugate of $p$.  Then 
 \begin{eqnarray*}
 \left|\int_{-T}^T\int \partial u\bar g\,dx\,dt\right|&\leq & \int_{-T}^T\int |\partial u||g|\,dx\,dt\\
 &\leq&\liminf_{\epsilon\rightarrow 0}\int_{-T}^T\int |\partial u_\epsilon||g|\,dx\,dt\\
 &\leq&\int_{-T}^T\|\partial u_\epsilon\|_{L^p}\|g\|_{L^{p'}}\,dt\leq C\int_{-T}^T\|\partial u_\epsilon\|_{L^\infty}\leq C,
 \end{eqnarray*}
 where we have used Fatou's theorem and  \eqref{bound2}.
 As a consequence we deduce that for any $p\geq 1$ we have 
 \begin{equation}\label{bound4}
 \int_{-T}^T\left(\int|\partial u|^p\,dx\right)^{1/p} dt \leq C.
  \end{equation}
 We now recall that  for any  function $f$ we have 
 $$\|f\|_{L^\infty}=\lim_{p\rightarrow \infty}\left(\frac{1}{2\pi}\int|f|^p\,dx\right)^{1/p},$$
 and taking a sequence of indices $p_j$ such that $p_j\rightarrow \infty$ as $j\rightarrow \infty$ in \eqref{bound4} and using Fatou's theorem again, we finally have \eqref{bound3}.

 Now assume that $v$ is another strong solution of \eqref{data}. 
  We define in a similar manner $v_\epsilon$ and also for $v$ \eqref{bound3} holds. Next we compute 
$$\frac{d}{dt}\|u_\epsilon-v_\epsilon\|_{L^2}^2=12\Re\int\theta_\epsilon 
 (|u|^2\partial u-|v|^2\partial v)\overline{\theta_\epsilon (u-v)}dx,$$
 we integrate in $t$ to get 
 $$\|u_\epsilon(t)-v_\epsilon(t)\|_{L^2}^2=12\Re\int_0^t\int\theta_\epsilon 
 (|u|^2\partial u-|v|^2\partial v)\overline{\theta_\epsilon (u-v)}dx\,dt'. $$
 We now let $\epsilon\rightarrow 0$, to get
 \begin{equation}\label{limitep}
 \|u(t)-v(t)\|_{L^2}^2=12\Re\int_0^t\int 
 (|u|^2\partial u-|v|^2\partial v)\overline{ (u-v)}dx\,dt'. 
 \end{equation}
 We can write the right hand side of \eqref{limitep} as 
 \begin{multline}\label{uniqueness}12\Re\int_0^t\int 
 (|u|^2-|v|^2)\overline{ (u-v)}\partial u \,dx\,dt'
+12\Re\int_0^t\int |v|^2\partial (u-v)\overline{(u-v)}dx\,dt'\\
 =12\Re\int_0^t\int (|u|^2-|v|^2)\overline{(u-v)}\partial u \, dx\,dt'\
- 6 \int_0^t \int\partial (|v|^2) |u-v|^2dx\,dt'
 \end{multline}
 where we have used integration by parts.
  From the identity 
 $$|u|^2-|v|^2=u(\overline{ u-v})+\bar v(u-v)$$ 
 we get
  \begin{multline}\label{uniqueness}
   \|u(t)-v(t)\|_{L^2}^2= 12 \Re\int^t_0\int u(\overline{ u-v})^2 \partial u  \, dx\,dt'
  \\+12 \Re\int_0^t\int \bar v|u-v|^2\partial u \, dx\,dt'-6
\int_0^t\int\partial (|v|^2) |u-v|^2  \, dx\,dt'.
 \end{multline}
 Next we introduce $$M=\sup_{t\in [-T, T)]}\Big \{\|u(t)\|_{H^s}, \|v(t)\|_{H^s}\Big \},$$
 which is finite due to the fact that $u$ and $v$ are strong solutions. 
 From \eqref{uniqueness} we see that, for $0<\delta<T$,  we have 
 $$\sup_{0\leq t\leq \delta}\|u(t)-v(t)\|_{L^2}^2\leq CM\sup_{0\leq t\leq \delta}\|u(t)-v(t)\|_{L^2}^2 \int_0^\delta (\|\partial u\|_{L^\infty} + \|\partial v\|_{L^\infty})dt'.$$
 Since $\partial u,\, \partial v\in L^1([-T,T],\, L^\infty)$, there exists $\delta>0$ such that 
we can absorb the r.h.s. on the l.h.s. and we get 
\begin{equation}\label{giaq}
\sup_{0\leq t\leq \delta}\|u(t)-v(t)\|_{L^2}=0.
 \end{equation}
 Hence if we define
$$\bar t=\sup\{t\in [0,T] \hbox{ s.t. } u(t)=v(t)\}$$
then we have proved that $\bar t>0$. Moreover by repeating the argument above starting from time $\bar t$, we deduce that necessarily $\bar t=T$.

 \end{proof}
 
 \subsection{Long time convergence  of $\Phi_N(t)$ to $\Phi(t)$}

Next proposition will be rather useful in the sequel.

\begin{prop}\label{prop2}
Let $s> s'>\frac 43$ be given and $\{u_{k,0}\}$ be a sequence in $H^s$ 
such that
$$u_{k,0}\overset{k\rightarrow \infty} \longrightarrow u_0 \hbox{ in } H^s.$$
Assume moreover that for some $T>0$ we have
\begin{equation}\label{initialdata}\sup_{\substack{k\in \N\\ t\in [-T, T]}} \|\Phi_{N_k}(t) u_{k,0}\|_{H^s}<\infty,\end{equation}
for a suitable sequence $N_k \overset{k\rightarrow \infty} \longrightarrow \infty$ .
Then there exists 
$u\in {\mathcal C}([-T, T], H^s)$ 
such that
\begin{equation}\label{interpolSS}\sup_{t\in [-T, T]} \|\Phi_{N_k}(t) u_{k,0}-u(t)\|_{H^{s'}}
\overset{k\rightarrow \infty} \longrightarrow 0\end{equation}
and $u$ is the unique solution to \eqref{data}.
Moreover
\begin{equation}\label{convspri}\sup_{t\in [-T, T]} \|u(t)\|_{H^{s}}\leq \sup_{\substack{k\in \N\\ t\in [-T, T]}} \|\Phi_{N_k}(t) u_{k,0}\|_{H^s}.
\end{equation}
\end{prop}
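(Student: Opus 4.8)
The plan is to combine the uniform-in-$N$ energy bound from Proposition \ref{cauchytheory}, the $L^2$-Cauchy mechanism of Lemma \ref{N Cauchy}, and the existence/uniqueness results (Propositions \ref{esistsss} and \ref{prop-uniq}) to upgrade $L^2$ convergence to $H^{s'}$ convergence while keeping control of the $H^s$ norm. First I would note that the hypothesis \eqref{initialdata} is exactly the substitute for \eqref{hsbounded}: it says the flows $\Phi_{N_k}(t)u_{k,0}$ stay in a fixed ball $B^s_M$ for $|t|\le T$, with $M = \sup_{k,|t|\le T}\|\Phi_{N_k}(t)u_{k,0}\|_{H^s}$. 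Since $u_{k,0}\to u_0$ in $H^s$, in particular in $L^2$, and $\sup_k\|u_{k,0}\|_{H^s}<\infty$, Lemma \ref{N Cauchy} applies on a short interval $[-T_0,T_0]$ with $T_0 = c\langle S\rangle^{-\beta}$, $S=\sup_k\|u_{k,0}\|_{H^s}$; it gives a Cauchy sequence in ${\mathcal C}([-T_0,T_0],L^2)$, hence a limit $u\in{\mathcal C}([-T_0,T_0],L^2)$. The point, however, is that \eqref{initialdata} lets us iterate this on $[-T,T]$ rather than just $[-T_0,T_0]$: on each subinterval the relevant a priori bound is $M$ (not $S+S^{-1}$), and the Gronwall-type absorption in \eqref{tragaci} only needs the subinterval length to be small relative to $M$, which holds after subdividing $[-T,T]$ into finitely many pieces. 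So $\Phi_{N_k}(t)u_{k,0}$ is Cauchy in ${\mathcal C}([-T,T],L^2)$ with limit $u$.

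Next I would interpolate: combining the $L^2$ convergence with the uniform $H^s$ bound \eqref{initialdata} and the inequality $\|f\|_{H^{s'}}\le\|f\|_{L^2}^{1-s'/s}\|f\|_{H^s}^{s'/s}$ gives \eqref{interpolSS}, i.e. convergence in ${\mathcal C}([-T,T],H^{s'})$ for every $s' < s$. Then I would identify $u$ as a strong solution of \eqref{data}: passing to the limit in the Duhamel formula \eqref{duHAme} for $u_{N_k}$, using that $s'>4/3$ makes the map $v\mapsto |v|^2\partial v$ continuous from $H^{s'}$ to $L^2$ (exactly as in the proof of Proposition \ref{esistsss}), we get \eqref{duha} in $L^2$ for each $t$, so $u$ is a strong solution with data $u_0$. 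By Proposition \ref{prop-uniq}, $u$ is the unique strong solution in ${\mathcal C}([-T,T],H^{s'})$; and since \eqref{initialdata} together with weak lower semicontinuity of the $H^s$ norm forces $u(t)\in H^s$ with $\|u(t)\|_{H^s}\le M$ for each $t$, the weak-continuity/strong-continuity argument of Claims 1--3 in the proof of Proposition \ref{esistsss} (run verbatim, replacing \eqref{hsbounded} by \eqref{initialdata}) shows $u\in{\mathcal C}([-T,T],H^s)$. This also yields \eqref{convspri}, since for each fixed $t$, $\Phi_{N_k}(t)u_{k,0}\rightharpoonup u(t)$ in $H^s$ and $\|u(t)\|_{H^s}\le\liminf_k\|\Phi_{N_k}(t)u_{k,0}\|_{H^s}\le M$.

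The step I expect to be the main obstacle is the \emph{iteration} of Lemma \ref{N Cauchy} across the full interval $[-T,T]$: a priori the lemma only gives the Cauchy property on a subinterval whose length depends on $S$ through $\langle S\rangle^{-\beta}$, which is harmless, but to chain the subintervals one must know that the $H^s$ norms of $\Phi_{N_k}(t)u_{k,0}$ remain bounded at each gluing time — and that is precisely what \eqref{initialdata} supplies, so the length of the subintervals is governed by $M$ uniformly. One must be slightly careful that at each gluing time $t_j$ the initial data $\Pi_{N_k}\Phi_{N_k}(t_j)u_{k,0}$ still form a convergent-in-$L^2$ sequence (which follows from the previous step, since convergence in ${\mathcal C}([t_{j-1},t_j],L^2)$ includes the endpoint) and satisfy a uniform $H^s$ bound (from \eqref{initialdata}), so the hypotheses of Lemma \ref{N Cauchy} — adapted to varying data — are met on $[t_j,t_{j+1}]$. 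A secondary technical point is that Lemma \ref{N Cauchy} as stated compares $\Phi_N(t)u_{N,0}$ with $u_{N,0}\to u_0$ in $L^2$ only; here one additionally has $u_{k,0}\to u_0$ in $H^s$, which is stronger and causes no difficulty. Once the Cauchy property on $[-T,T]$ is in hand, everything else is the now-standard interpolation plus Bona--Smith-type continuity argument already carried out in the proof of Proposition \ref{esistsss}.
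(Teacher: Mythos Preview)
Your proposal is correct and follows essentially the same approach as the paper's own proof: adapt Lemma \ref{N Cauchy} along the subsequence $N_k$, iterate across $[-T,T]$ using the uniform bound \eqref{initialdata}, then invoke the argument of Proposition \ref{esistsss} to obtain $u\in{\mathcal C}([-T,T],H^s)$ and identify it as the unique solution, get \eqref{interpolSS} by interpolation, and deduce \eqref{convspri} from weak $H^s$ convergence plus lower semicontinuity. Your write-up is in fact more detailed than the paper's, which merely sketches these steps; in particular your explicit discussion of the iteration (checking that at each gluing time the $L^2$ convergence and $H^s$ bound persist) fills in exactly what the paper leaves implicit.
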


\begin{proof} By adapting the proof of Lemma \ref{N Cauchy}  one can show that 
$\Phi_{N_k}(t) u_{k,0}$ has a limit $u$ in the space ${\mathcal C}([-T, T], L^2)$ (notice that we  work on a subsequence of frequencies $N_k$ and not on the full set of frequencies $N$ as in  Lemma \ref{N Cauchy},  but the proof of Lemma  \ref{N Cauchy} is unchanged in this framework). Notice also that Lemma \ref{N Cauchy} holds on a short time, but it can be iterated up to fixed time $T$, since the assumption \eqref{initialdata} allows for this iteration.
Moreover by looking at the proof of Proposition \ref{esistsss} one can show that the limit function $u$ belongs to ${\mathcal C}([-T, T], H^s)$
and moreover $u$
is the unique solution to the Cauchy problem \eqref{data} (notice that along the proof of Proposition \ref{esistsss} we assume that the initial condition
does not change along with the spectral parameter $N$, however this change of initial conditions is allowed in Lemma \ref{N Cauchy}, which is the key to prove
Proposition \ref{esistsss}).
The proof of \eqref{interpolSS} follows by interpolation, since we have convergence in $\mathcal C([-T,T],L^2)$ and  by assumption we have boundedness of  $\Phi_{N_k}(t) u_{k,0}$
in ${\mathcal C}([-T, T], H^s)$.
The proof of \eqref{convspri} follows from the uniform bound \eqref{initialdata}, which along with 
\eqref{interpolSS} implies $\Phi_{N_k}(t) u_{k,0}\overset{H^s} \rightharpoonup u(t)$ and hence we conclude by semicontinuity of the $H^s$ norm for weakly converging sequences.

\end{proof}

\section{Weighted Gaussian measures, pairing and multilinear gaussian estimates}\label{poBBGA}

In this section we first introduce the Gaussian measures, both unweighted and weighted (see subsections \ref{gaussprMN} and  \ref{sub-wGaussm}).  
Then in Subsection \ref{PAIR} we introduce the notion of pairing, which is a useful tool to estimate the second order momentum of linear combinations of multilinear products of Gaussian variables.\\

We start with some elementary considerations about the structure of the conservation laws $E_{2n+1}(u)$ introduced  in Section 1. We have
$$E_{2n+1}(u)=\|\partial^n u\|_{L^2}^2 +  R_{n}(u)$$
where $R_n(u)$ is a lower order correction which makes the energy $E_{2n+1}$ an exact conservation law for solutions to the complex-valued mKdV
(see Section \ref{RNU} for more details).
It is easy to check by induction that $R_n(u)$ is a linear combination of integrals of densities of the following type:
$$\prod_{i=1}^k \partial^{\alpha_i} v, \quad \max_{i=1,\cdots k} \alpha_i\leq n-1, \quad \sum_{i=1}^k \alpha_i\leq 2n-2, \quad \mbox{with } v\in \{u, \bar u\}.$$
We can assume that $\alpha_i\geq \alpha_{i+1}$ and hence necessarily $\alpha_3\leq n-2$. By the H\"older inequality and the Sobolev embedding 
$H^1\subset L^\infty$ we get
$$\Big|\int \prod_{i=1}^k \partial^{\alpha_i} v\Big|\leq \|u\|_{H^{n-1}}^2 \times \prod_{i=3}^k \|u\|_{W^{\alpha_i,\infty}}
\leq C  \|u\|_{H^{n-1}}^k, \quad \forall n\geq 2.$$
Hence we obtain  the  bounds
\begin{equation}
\label{upperboundM}
\left| R_n(u)\right|\leq p_n(\|u\|_{H^{n-1}})
\end{equation}
and
\begin{equation}\label{ENL2}\|u\|_{H^n}^2\leq |E_{2n+1}(u)| + p_n(\|u\|_{H^{n-1}}), \quad \forall n\geq 2
\end{equation}
where $p_n$ is a polynomial.

\subsection{Gaussian measures and basic properties}\label{gaussprMN}

For every $n\geq 2$ we recall that $\mu_n$, introduced in subsection \ref{prob} in the introduction,  denotes the Gaussian probability measure defined as the law of the random vector \eqref{randomizedgen}.
We have the following basic properties  of the probability measure $\mu_n$:
\begin{itemize}
\item[(1)] $\mu_n(H^s)=1, \quad \forall s<\frac{2n-1}2$;
\item[(2)] $\mu_n (H^{\frac{2n-1}2})=0$;
\item [(3)]$\forall s<\frac{2n-1}2 \quad \exists k, K>0 \hbox{ s.t. } \mu_n\big \{u\in H^s \hbox{ s.t. } \|u\|_{H^s}>M\big \}\leq Ke^{-kM^2}$.
\end{itemize}
\begin{remark}\label{rem1}
From the computation below it will be clear that the constants $K$ and $k$ in (3) can be chosen uniformly with respect to $s$ provided that $s<\bar s<  \frac{2n-1}2$ for some $\bar s$. 
\end{remark}

For completeness we sketch the proof of (1)-(3) above. Item  (1) follows from 
\begin{equation*}
\E\Big(\Big \|\sum_{j\in \Z} 
\frac{g_j(\omega)}{\sqrt{2\pi( 1+j^{2n})}} e^{i jx}\Big \|_{H^s}^2\Big)
\sim \E \Big(\sum_{j\in \Z}\langle j \rangle^{2s-2n} |g_j(\omega)|^2\Big)
\sim \sum_{j\in \Z}\langle j \rangle^{2s-2n}<+\infty\end{equation*}
where we used in the last step that
$s<n-\frac12.$
In order to prove item  (2) we get by independence and Gaussianity
\begin{multline*}
\E \Big(e^{-\Big \|\sum_{j\in \Z} 
\frac{g_j(\omega)}{\sqrt{2\pi( 1+j^{2n})}} e^{i jx}\Big \|_{H^{n-\frac 12}}^2}\Big)\sim \E \Big(e^{- \sum_{j\in \Z}\langle j \rangle^{-1} |g_j(\omega)|^2}\Big)\\
\sim \prod_{j\in \Z} \E \Big(e^{-\langle j \rangle^{-1} |g_j(\omega)|^2}\Big)
\sim \prod_{j\in \Z} \frac 1 \pi \int_\C e^{-\langle j \rangle^{-1} |z|^2} e^{-|z|^2} =\prod_{j\in \Z} \frac{\langle j \rangle}{1+\langle j \rangle}=0
\end{multline*}
and the conclusion follows.
Finally we prove item (3). We first bound the high order momenta:
\begin{multline*}
\E\Big(\Big \|\sum_{j\in \Z} 
\frac{g_j(\omega)}{\sqrt{2\pi( 1+j^{2n})}} e^{i jx}\Big \|_{H^s}^p\Big)
\sim \E \Big(\Big \|\sum_{j\in \Z}\langle j \rangle^{s-n} g_j(\omega) e^{ijx}\Big \|_{L^2_x}^p\Big)
\\\leq \Big\|  \E \Big (\Big|\underbrace{\sum_{j\in \Z}\langle j \rangle^{s-n} g_j(\omega) e^{ijx}}_{\mathcal N(0, \sum_{j\in \Z}\langle j \rangle^{2s-2n})} \Big|^p\Big )   \Big \|_{L^\frac 2p_x}
\leq C p^\frac p2, \end{multline*}
where we used the Minkowski inequality along with well known bounds on the momenta of complex valued Gaussians.
Then we conclude the Gaussian bound (3) by the following elementary fact (see for instance \cite{TV13}):
\begin{lemma}
Let $F:\Omega\rightarrow \R$ be a random variable on the probability space $(\Omega, \mathcal A, p)$
such that:
$$\E (|F(\omega)|^p)\leq C p^\frac p2$$
then we have
$$p\{\omega\in \Omega| |F(\omega)|>\lambda\}\leq \exp(-K\lambda^2)$$
where $K=K(C)>0$.

\end{lemma}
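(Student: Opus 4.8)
The plan is to use the classical moment method: for each fixed exponent one controls the tail of $|F|$ by its $p$-th moment through Chebyshev's inequality, and then one optimizes the resulting bound over the free parameter $p$, choosing $p$ proportional to $\lambda^2$.

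First I would fix $\lambda>0$ and note that for every admissible exponent $p$ (say $p\ge 1$, the range in which the hypothesis $\E(|F(\omega)|^p)\le C p^{p/2}$ is available),
$$p\{\omega\in\Omega:|F(\omega)|>\lambda\}=p\{|F(\omega)|^p>\lambda^p\}\le \frac{\E(|F(\omega)|^p)}{\lambda^p}\le C\left(\frac{\sqrt p}{\lambda}\right)^{p}.$$
The right-hand side decays only when $p<\lambda^2$, so the natural choice is $p\asymp\lambda^2$. Taking $p=\lambda^2/e$ gives $\sqrt p/\lambda=e^{-1/2}$ and hence the bound $C\,e^{-\lambda^2/(2e)}$; if in the intended application only even moments are controlled one instead takes $p$ to be the largest even integer not exceeding $\lambda^2/e$, which only changes the constant in the exponent. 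One may equivalently minimize $p\mapsto p(\tfrac12\log p-\log\lambda)$ directly, whose minimizer is exactly $p=\lambda^2/e$ with value $-\lambda^2/(2e)$. This already yields $p\{|F|>\lambda\}\le C\,e^{-c\lambda^2}$ for all $\lambda\ge\lambda_0$, where $\lambda_0$ is chosen so that the selected exponent lies in the admissible range.

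Finally I would absorb the constant $C$ and dispose of the small values of $\lambda$: for $\lambda$ large one has $C\le e^{(c/2)\lambda^2}$, so $p\{|F|>\lambda\}\le e^{-(c/2)\lambda^2}$, while for $\lambda$ in a bounded range one uses the trivial bound $p\{|F|>\lambda\}\le 1$ together with a further shrinking of the exponent constant (or, as is done in the applications such as item $(3)$ above and consistently with Remark \ref{rem1}, by retaining an absolute multiplicative constant in front of the exponential). Either way one obtains $p\{|F|>\lambda\}\le e^{-K\lambda^2}$ with $K=K(C)>0$. I do not anticipate a genuine obstacle here: the argument is entirely elementary, and the only points needing minor care are the admissible range of $p$ in the hypothesis (and its discreteness if only even moments are bounded) and the bookkeeping for small $\lambda$, where the sub-Gaussian estimate is vacuous and one merely needs the constants to be compatible.
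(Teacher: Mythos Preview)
Your proposal is correct and follows the standard moment-optimization argument; the paper itself does not supply a proof of this lemma, merely citing it as an elementary fact with a reference to \cite{TV13}, and your argument is precisely the classical one that reference records. The only minor remark is that the statement as written in the paper gives the conclusion in the form $p\{|F|>\lambda\}\le \exp(-K\lambda^2)$ without a multiplicative constant in front, so your final step of absorbing the prefactor $C$ into the exponent (at the cost of shrinking $K$) is indeed needed to match the stated form exactly.
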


\subsection{Weighted Gaussian measures} \label{sub-wGaussm} 
Next we introduce, for every $N\in \N$ the functionals
$$F_{n,R,N}(u)=F_{n,R}\big(\Pi_N(u)\big)$$
where as usual $\Pi_N$ is the Dirichlet projector \eqref{pin} and $F_{n,R}$ is defined in \eqref{FnR}.
We also introduce the following family of measures:
$$d\rho_{n,R,N}= F_{n,R, N}(u) d\mu_n, \quad \forall R>0, \quad N\in \N.
$$
\\
Next we collect some relevant properties of $\rho_{n,R,N}$.
\begin{prop}\label{ProPMN}
Fix $n\geq 2$ integer and $R>0$. Then the following holds:
\begin{equation}\label{unif} \sup_{N\in \N, \hbox{ } u\in H^{n-1}}\Big |\chi_{\Omega_{n,R,N}}(u)\exp\Big(-R_{n}(\Pi_Nu)\Big)\Big|<\infty, \quad \sup_{u\in H^{n-1}}
\Big|\chi_{\Omega_{n,R}(u)}\exp\Big(-R_n(u)\Big)\Big |<\infty, \end{equation}
where
\begin{multline*}
\Omega_{n,R,N}=\Big \{u\in H^{n-1} \hbox{ s.t. }  \prod_{l\in \{0,\dots, n-1\}} \chi_R\Big(E_{2l+1}(\Pi_N u)\Big)>0\Big\},\\
\Omega_{n,R}=\Big \{u\in H^{n-1} \hbox{ s.t. }  \prod_{l\in \{0,\dots, n-1\}} \chi_R\Big(E_{2l+1}(u)\Big)>0\Big\}.
\end{multline*}
Moreover we have 
\begin{equation}\label{munlinf}\sup_N \|F_{n,R,N}\|_{L^\infty(\mu_n)}<\infty, \quad \|F_{n,R}\|_{L^\infty(\mu_n)}<\infty;\end{equation}
\begin{equation}\label{convrhoNR}
\sup_{\substack{A
\in {\mathcal B}(H^s)\\ s<\frac {2n-1}2}} \big|\rho_{n,R, N}(A)-\rho_{n, R}(A)\big| \overset{N\rightarrow \infty} \longrightarrow 0
\end{equation}
where ${\mathcal B}(H^s)$ are  Borel subsets in $H^s$;

\begin{equation}\label{gaussianbound} \forall s<\frac{2n-1}2 \quad \exists k,K \hbox{ s.t. }
\rho_{n,R,N} \Big\{u\in H^{s} \hbox{ s.t. } \|u\|_{H^s}>M\Big\}\leq Ke^{-kM^2}, \quad \forall N,M.
\end{equation}
\end{prop}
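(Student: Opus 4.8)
The four claims are of increasing depth, and I would establish them in the order \eqref{unif} $\Rightarrow$ \eqref{munlinf} $\Rightarrow$ \eqref{gaussianbound}, with \eqref{convrhoNR} last. For \eqref{unif}: the key observation is that on the set $\Omega_{n,R,N}$ each factor $\chi_R(E_{2l+1}(\Pi_N u))$ is nonzero, which by the definition of $\chi_R$ forces $|E_{2l+1}(\Pi_N u)|\lesssim R$ for every $l\in\{0,\dots,n-1\}$. Feeding this into the inductive bound \eqref{ENL2} and iterating in $l$ (starting from $E_1$ controlling $\|\Pi_N u\|_{L^2}$, then $E_3$ controlling $\|\Pi_N u\|_{H^1}$, and so on up to $E_{2n-1}$ controlling $\|\Pi_N u\|_{H^{n-1}}$), one gets $\|\Pi_N u\|_{H^{n-1}}\le C(R,n)$ uniformly in $N$ on $\Omega_{n,R,N}$. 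Then \eqref{upperboundM} gives $|R_n(\Pi_N u)|\le p_n(C(R,n))$, so $\exp(-R_n(\Pi_N u))$ is bounded on $\Omega_{n,R,N}$ uniformly in $N$; off $\Omega_{n,R,N}$ the indicator vanishes. The statement for $\Omega_{n,R}$ is the $N=\infty$ version of the same computation.

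For \eqref{munlinf}: since $F_{n,R,N}(u)=F_{n,R}(\Pi_N u)=\prod_l \chi_R(E_{2l+1}(\Pi_N u))\,\exp(-R_n(\Pi_N u))$, and $|\chi_R|$ is bounded by $\|\chi\|_{L^\infty}$, we have $|F_{n,R,N}(u)|\le \|\chi\|_{L^\infty}^n \,|\chi_{\Omega_{n,R,N}}(u)\exp(-R_n(\Pi_N u))|$, which is bounded uniformly in $N$ by \eqref{unif}. Since $\mu_n(H^{n-1})=1$ by property (1) of Subsection \ref{gaussprMN} (using $n-1<\tfrac{2n-1}{2}$), the $L^\infty(\mu_n)$ norm is controlled by the pointwise sup over $H^{n-1}$, giving the claim; the unweighted statement is identical. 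Claim \eqref{gaussianbound} then follows immediately: for any Borel $A\subset\{u\in H^s:\|u\|_{H^s}>M\}$,
\[
\rho_{n,R,N}\{\|u\|_{H^s}>M\}=\int_{\{\|u\|_{H^s}>M\}}F_{n,R,N}\,d\mu_n\le \|F_{n,R,N}\|_{L^\infty(\mu_n)}\,\mu_n\{\|u\|_{H^s}>M\}\le C\,Ke^{-kM^2},
\]
using \eqref{munlinf} and property (3) of $\mu_n$; absorbing $C$ into the constant and noting (Remark \ref{rem1}) that $k,K$ can be taken uniform for $s<\bar s<\tfrac{2n-1}{2}$ completes it.

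The real content is \eqref{convrhoNR}, total-variation convergence $\rho_{n,R,N}\to\rho_{n,R}$. The plan is: (i) show $F_{n,R,N}\to F_{n,R}$ in $L^1(\mu_n)$, and (ii) observe that $|\rho_{n,R,N}(A)-\rho_{n,R}(A)|=|\int_A (F_{n,R,N}-F_{n,R})\,d\mu_n|\le \|F_{n,R,N}-F_{n,R}\|_{L^1(\mu_n)}$ uniformly over all Borel $A$ (and the sup over $s<\tfrac{2n-1}{2}$ is harmless since $\mu_n$ lives on $\bigcap_{s<(2n-1)/2}H^s$). For (i): $\mu_n$-almost every $u$ lies in $H^{n-1}$, hence in particular $\Pi_N u\to u$ in $H^{n-1}$; since $E_{2l+1}$ and $R_n$ are continuous on $H^{n-1}$ (they are integrals of polynomial densities in $u,\bar u$ and at most $n-1$ derivatives, controlled via Hölder/Sobolev as in the $R_n(u)$ discussion), we get $F_{n,R}(\Pi_N u)\to F_{n,R}(u)$ pointwise $\mu_n$-a.e. — modulo the measure-zero issue of $u$ sitting exactly on the boundary where some $\chi_R(E_{2l+1}(u))$ is discontinuous, which one sidesteps by choosing the cutoff so this boundary set has $\mu_n$-measure zero, or by a standard approximation argument. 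Then the uniform bound \eqref{munlinf} lets dominated convergence upgrade pointwise a.e. convergence to $L^1(\mu_n)$ convergence. The main obstacle is exactly this continuity/boundary point: one must argue carefully that $\{u:\chi_R\circ E_{2l+1}$ discontinuous at $u\}$ is $\mu_n$-null, which follows because $\chi_R$ is smooth hence its only possible discontinuities under composition arise from $\chi$ being only $C^\infty_0$ — in fact $\chi_R$ is genuinely smooth so $F_{n,R}$ is continuous on $H^{n-1}$ outright and this subtlety evaporates; the one genuine check is continuity of the nonlinear functionals $E_{2l+1},R_n$ on $H^{n-1}$, which is the multilinear estimate already carried out in the preamble.
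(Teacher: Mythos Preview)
Your argument is correct and matches the paper's proof closely for \eqref{unif}, \eqref{munlinf}, and \eqref{gaussianbound}: the same iteration of \eqref{ENL2} to bound $\|\Pi_N u\|_{H^{n-1}}$ on $\Omega_{n,R,N}$, followed by \eqref{upperboundM} to control $R_n$, and then the Gaussian tail bound for $\mu_n$.

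For \eqref{convrhoNR} you take a slightly different but equally valid route. The paper also establishes pointwise convergence $F_{n,R,N}(u)\to F_{n,R}(u)$ on $H^{n-1}$, but then invokes Egoroff's theorem: given $\varepsilon>0$ it produces a set $\mathcal R_\varepsilon$ with $\mu_n(\mathcal R_\varepsilon)<\varepsilon$ on whose complement the convergence is uniform, and splits $|\rho_{n,R,N}(A)-\rho_{n,R}(A)|$ over $A\cap\mathcal R_\varepsilon$ and $A\setminus\mathcal R_\varepsilon$, controlling the first piece by the uniform $L^\infty$ bound \eqref{unif} times $\varepsilon$ and the second by uniform convergence. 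Your approach via dominated convergence is more direct: pointwise a.e.\ convergence plus the uniform dominating bound from \eqref{munlinf} gives $\|F_{n,R,N}-F_{n,R}\|_{L^1(\mu_n)}\to 0$, and this $L^1$ convergence is exactly the total-variation convergence $\sup_A|\rho_{n,R,N}(A)-\rho_{n,R}(A)|\to 0$. Both arguments rest on the same two ingredients (pointwise convergence and uniform boundedness); yours packages them through DCT, the paper's through Egoroff. Your observation that $\chi_R\in C^\infty_0$ makes $F_{n,R}$ genuinely continuous on $H^{n-1}$, so there is no boundary-set issue, is correct and worth stating explicitly.
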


\begin{proof}
By elementary considerations based on \eqref{ENL2}, we have the following inclusion: 
$$\Omega_{n,R,N}\subset \Big \{u\in H^{n-1} \hbox{ s.t. }  \|\Pi_N(u)\|_{H^{n-1}}\leq C(R)\Big \},$$
for some $C(R)>0$.
Hence by \eqref{upperboundM} we deduce the uniform boundedness of $\chi_{\Omega_{n,R,N}}(u)\exp(-R_{n}(\Pi_Nu))$ on $H^{n-1}$. Same argument works for 
the uniform boundedness of $\chi_{\Omega_{n,R}}(u)\exp(-R_{n}(u))$, and hence \eqref{unif} follows. The proof of \eqref{munlinf} comes from \eqref{unif} along with 
the fact that $\mu_n(H^{n-1})=1$.\\
In order to prove \eqref{convrhoNR} notice that in \eqref{convrhoNR} we can consider the sup 
over $N\in \N$ and  $A\subset {\mathcal B}(H^{n-1})$.   In fact, since $\mu_n(H^{n-1})=1$,  for any $A\subset H^s$  we have that $\mu_n(A\cap H^{n-1})=\mu_n(A)$
and hence $\rho_{n,R, N}(A)=\rho_{n,R, N}(A\cap H^{n-1}), \quad \rho_{n, R}(A)= \rho_{n, R}(A\cap H^{n-1})$. Based on 
elementary considerations and basic properties on the Dirichlet projectors $\Pi_N$, we have the following pointwise convergence
$F_{n,N,R}(u)\overset{N\rightarrow \infty}\longrightarrow F_{n, R}(u), \quad \forall u\in H^{n-1}.$
Next we can apply the Egoroff theorem which implies that for every $\varepsilon>0$ there exists a subset ${\mathcal R}_{\varepsilon}\subset H^{n-1}$ 
such that $\mu_n({\mathcal R}_{\varepsilon})<\varepsilon$ and the convergence 
$F_{n,R, N}(u)\overset{N\rightarrow \infty}\longrightarrow F_{n, R}(u)$ is uniform on $H^{n-1}\setminus {\mathcal R}_{\varepsilon}$.
Hence we get
\begin{multline}\label{EGO}
\sup_{A\in {\mathcal B}(H^{n-1})} \Big |\rho_{n,R,N}(A)-\rho_{n,R}(A)\Big|\leq \sup_{A\in {\mathcal B}(H^{n-1})}  \Big|\rho_{n,R,N}(A\cap {\mathcal R}_{\varepsilon} )
-\rho_{n,R}(A\cap {\mathcal R}_{\varepsilon} )\Big|\\
+
\sup_{A\in{ \mathcal B}(H^{n-1})} \Big |\rho_{n,R,N}(A\setminus {\mathcal R}_{\varepsilon} ) -\rho_{n,R}(A\setminus {\mathcal R}_{\varepsilon})\Big|.
\end{multline}
The first term on the right hand side can be controlled, up to a constant related to the bound \eqref{unif}, by $\mu_n({\mathcal R}_{\varepsilon})<\varepsilon$.
The second term on the right hand side  can be estimated by the uniform convergence of $F_{n,R,N}$ to $F_{n,R}$ in $H^{n-1}\setminus {\mathcal R}_{\varepsilon}$. More precisely, there exists $N_\varepsilon$ large enough such that if 
 $N\geq N_\varepsilon$ the second term on the right hand side  of 
\eqref{EGO} is smaller than  $\varepsilon$.
The last property \eqref{gaussianbound} is a consequence of \eqref{munlinf} together  with 
Gaussian bounds for the measure $\mu_n$ recalled above, at the beginning of this subsection.
\end{proof}

We shall also need the following property.
\begin{prop}\label{luccidi}
Let $A_j\subset H^s$ Borel sets, with $s\in (0, \frac{2n-1}2)$, such that $\rho_{n,j}(H^s\setminus A_j)=0$ for every $j\in \N$. Then necessarily 
$\mu_n\big (H^s\setminus \bigcup_{j\in \N}A_{j} \big )=0$.
\end{prop}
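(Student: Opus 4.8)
The plan is to unwind the definition of the weighted measure and exploit the strict positivity of its density on a $\mu_n$-full set. Writing $\rho_{n,j}$ for $\rho_{n,R}$ with $R=j\in\N$, we have $d\rho_{n,j}=F_{n,j}(u)\,d\mu_n$ with $F_{n,j}=\prod_{l=0}^{n-1}\chi_j(E_{2l+1}(\cdot))\exp(-R_n(\cdot))\ge 0$. The key observation is that $F_{n,j}$ is \emph{strictly} positive on the set
\[
G_j:=\Big\{u\in H^{n-1}\ :\ |E_{2l+1}(u)|<j\ \text{ for all }l\in\{0,\dots,n-1\}\Big\},
\]
since $\chi(x)=1$ for $|x|<1$ forces $\chi_j(E_{2l+1}(u))=\chi(E_{2l+1}(u)/j)=1$ on $G_j$, while $\exp(-R_n(u))>0$ because $R_n(u)$ is finite on $H^{n-1}$ by \eqref{upperboundM}. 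Moreover $\bigcup_{j\in\N}G_j=H^{n-1}$: for $u\in H^{n-1}$ each quantity $E_{2l+1}(u)=\|\partial^l u\|_{L^2}^2+R_l(u)$ with $l\le n-1$ is finite (the leading term is bounded by $\|u\|_{H^{n-1}}^2$ and the remainder by \eqref{upperboundM}), hence $u\in G_j$ once $j>\max_{0\le l\le n-1}|E_{2l+1}(u)|$. Since $\mu_n(H^{n-1})=1$, the sets $G_j$ exhaust $H^s$ up to a $\mu_n$-null set. (Measurability of $G_j$ is routine: $E_{2l+1}$ is continuous on $H^{n-1}$ and $H^{n-1}$ is a Borel subset of $H^s$.)

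The main steps are then the following. First, for each fixed $j$, the hypothesis $\rho_{n,j}(H^s\setminus A_j)=0$ means $\int_{H^s\setminus A_j}F_{n,j}\,d\mu_n=0$; restricting the non-negative integrand to $G_j$, where $F_{n,j}=\exp(-R_n)>0$, gives
\[
\mu_n\big((H^s\setminus A_j)\cap G_j\big)=0\qquad\text{for every }j\in\N.
\]
Second, since $H^s\setminus\bigcup_j A_j=\bigcap_j(H^s\setminus A_j)$ and $\mu_n(H^{n-1})=1$, it suffices to bound $\mu_n\big(\bigcap_j(H^s\setminus A_j)\cap H^{n-1}\big)$. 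If $u$ lies in this set, then $u\in H^{n-1}=\bigcup_j G_j$, so $u\in G_{j_0}$ for some $j_0$, and also $u\in H^s\setminus A_{j_0}$; hence
\[
\bigcap_{j\in\N}(H^s\setminus A_j)\ \cap\ H^{n-1}\ \subseteq\ \bigcup_{j_0\in\N}\big[(H^s\setminus A_{j_0})\cap G_{j_0}\big].
\]
Third, countable subadditivity of $\mu_n$ together with the first step yields $\mu_n\big(H^s\setminus\bigcup_j A_j\big)\le\sum_{j_0}\mu_n\big((H^s\setminus A_{j_0})\cap G_{j_0}\big)=0$, which is the claim.

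This argument is essentially a short measure-theoretic manipulation, so there is no serious obstacle; the one point that requires the structural input of the paper is the identification of a $\mu_n$-full family of sets on which the density $F_{n,j}$ is strictly positive. Concretely this rests on $\chi\equiv 1$ near the origin (so the cutoffs are $1$, not merely positive, on $G_j$), on the pointwise finiteness of $R_n$ and of the energies $E_{2l+1}$, $l\le n-1$, on $H^{n-1}$ via \eqref{upperboundM}, and on $\mu_n(H^{n-1})=1$. All of these are already recorded in Subsection~\ref{poBBGA} and in Proposition~\ref{ProPMN}, so the proof is immediate once they are invoked.
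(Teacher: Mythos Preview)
Your proof is correct and follows essentially the same approach as the paper: identify a $\mu_n$-full family of sets on which the density $F_{n,j}$ is strictly positive (you use $G_j=\{|E_{2l+1}|<j\ \forall l\}$, the paper uses the slightly larger $\Omega_j=\{\prod_l\chi_j(E_{2l+1})>0\}$), deduce $\mu_n((H^s\setminus A_j)\cap G_j)=0$ from the hypothesis, and then conclude by exhausting $H^{n-1}$. The only cosmetic difference is the final step: the paper intersects $\bigcap_j(H^{n-1}\setminus A_j)$ with a single $\Omega_{\bar j}$ and lets $\bar j\to\infty$ using $\mu_n(\Omega_{\bar j})\to 1$, whereas you cover by the union $\bigcup_{j_0}[(H^s\setminus A_{j_0})\cap G_{j_0}]$ and apply subadditivity---these are equivalent endgames.
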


\begin{proof}
Arguing as in Proposition \ref{ProPMN} we can assume $s=n-1$.
We introduce the following sets:
$$\Omega_j=\Big \{u\in H^{n-1} \hbox{ s.t. }  \prod_{l\in \{0,\dots, n-1\}} \chi_j\Big(E_{2l+1}(u)\Big)>0\Big\},$$ 
then we get
\begin{equation}\label{nullmeas}
\mu_n\Big ((H^{n-1} \setminus A_j )\cap\Omega_j\Big)=0.
\end{equation}
In fact by the assumption on $A_{j}$
we have
$$0=\int_{H^{n-1} \setminus A_{j}} F_{n,j} d\mu_n = \int_{(H^{n-1} \setminus A_{j})\cap \Omega_j} F_{n,j} d\mu_n $$
and hence, since $F_{n,j}(u)>0$ for every $u\in \Omega_j$, we conclude \eqref{nullmeas}.
Next we notice that 
\begin{equation}\label{omegan} 
\mu_n(\Omega_j)\overset{j\rightarrow \infty} \longrightarrow 1.
\end{equation}
In fact for every $u\in H^{n-1}$ we have $|E_{2l+1}(u)|<\infty$ for every  $l=1,\dots, n-1$, and hence
we have that $u\in \Omega_j$ for every $j\geq \bar j$, where $\bar j$ depends on $u$. Namely $\chi_{\Omega_j}(u)\overset{j\rightarrow \infty}\longrightarrow 1$ pointwise
on $H^{n-1}$ and hence, since $\mu_n$ is a probability measure, we conclude \eqref{omegan} by the dominated convergence theorem.
To conclude the proof notice that for every $\bar j\in \N$ we get
\begin{equation*}
\mu_n\Big((H^{n-1}\setminus \bigcup_{j\in \N} A_{j}) \bigcap \Omega_{\bar j}\Big)=\mu_n \Big (\bigcap_{j\in \N} (H^{n-1}\setminus A_{j}) \bigcap \Omega_{\bar j}\Big )
\leq \mu_n \Big((H^{n-1}\setminus A_{\bar j}) \bigcap \Omega_{\bar j}\Big )=0
\end{equation*}
where we have used \eqref{nullmeas}.
We complete the proof by passing to the limit in the estimate above as $\bar j\rightarrow \infty$ and by recalling \eqref{omegan}.

\end{proof}

\subsection{Pairing and multilinear products of Gaussians}\label{PAIR}
In  this subsection we follow \cite{DNY1} and \cite{DNY2}. 
Let us fix $n\in \N$ and ${\it i}_k\in \{\pm 1\}$ for $k=1,\dots, n$.
For any vector $(k_1,\dots, k_n)\in \Z^n$ and for every set $\mathcal I\subset \{1,\dots, n\}$ we denote
by $(k_1,\dots, k_n)_{\mathcal I}\in \Z^{n-\# {\mathcal I}}$ the vector obtained removing from $(k_1,\dots,k_n)$ the entries $k_j$ with $j\in \mathcal I$. Also we denote $\N_{\leq n}=\N\cap\{1,\dots, n\}$.
We give the following definition.
 \begin{mydef}\label{pairing}
Given $(k_1,\dots, k_n)\in \Z^n$ we say that we have: 
\begin{enumerate}
\item $0$-pairing when
$k_l\neq k_m$ for every $l,m\in \{1,\dots, n\}$ with $\it i_l\neq \it i_m$;
\item $1$-pairing, and we write  $(k_l, k_m)$,  when 
$k_l=k_m$ for some $l,m\in \{1,\dots, n\}$ with $\it i_l\neq \it i_m$ and
$k_m\neq k_h$ for every $h,m\in \{1,\dots, n\}\setminus \{l, m\}$ such that $i_h\neq i_m$.
\item $r$-pairing for $r\in \{1,\dots, [\frac n2]\}$ provided that there exist $X=(l_1,\dots,l_r)\in \N_{\leq n}^r, Y=(j_1, \dots, j_r)\in \N_{\leq n}^r$, with $\{l_1,\dots, l_r\}\cap\{j_1,\dots, j_r\}=\emptyset$
and $\#\{l_1, \dots, l_r\}=\#\{j_1, \dots, j_r\}=r$, such that
$k_{l_m}=k_{j_m}$ for every $m=1,\dots, r$ and ${\it i}_{j_m}\neq {\it i}_{l_m}$, and moreover $k_m\neq k_h$ 
for every $h,m\in \{1,\dots, n\} \setminus \{l_1,\dots, l_r, j_1, \dots, j_r\}$ 
such that ${\it i}_h\neq {\it i}_m$. In this case we  shall write that $(k_1,\dots, k_n)$ are $(X,Y)$-pairing.
\end{enumerate}
\end{mydef}
Next we consider linear combinations of multilinear Gaussians
$g_{\vec k}(\omega)=\prod_{j=1}^n g_{k_j}^{\it i_j}(\omega)$ where $g_l^1=g_l$ and $g_l^{-1}=\bar g_l$.
Let $a_{k_1,\dots, k_n}\in \C$ then we have the following bound, which is a special case of the more general large deviation results proved in \cite{DNY1} and \cite{DNY2}.
\begin{prop}\label{annal} We have the following bounds for a suitable constant $C>0$:
\begin{multline*}
\Big \|\sum_{\vec k\in \Z^n}  a_{\vec k} g_{\vec k}(\omega)  \Big \|_{L^2_\omega}^2 \leq C\sum_{\substack{r\in \{1,\dots, [\frac n2]\}\\ X=(l_1,\dots,l_r)\in \N_{\leq n}^r, Y=(j_1, \dots, j_r)\in \N_{\leq n}^r\\ \{l_1,\dots, l_r\}\cap\{j_1,\dots, j_r\}=\emptyset\\\#\{l_1, \dots, l_r\}=\#\{j_1, \dots, j_r\}=r
}}\sum_{\vec h\in \Z^{n-2r}} \Big (\sum_{\substack{\vec k\in \Z^n \\ \hbox{ s.t. } \vec k \hbox{ is } (X,Y)-\hbox{pairing}\\ 
\\\vec k_{\{l_1,\dots, l_r, j_1,\dots, j_r\}}=\vec h}} |a_{\vec k}|\Big )^2.
\end{multline*}
\end{prop}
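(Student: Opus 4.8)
The plan is to read the assertion as a second-moment identity and prove it by a direct Wick (Isserlis) expansion; at the level of the $L^2_\omega$ norm no hypercontractivity is needed. First I would expand the square,
\[
\Big\|\sum_{\vec k\in\Z^n} a_{\vec k}\,g_{\vec k}(\omega)\Big\|_{L^2_\omega}^2
=\sum_{\vec k,\vec k'\in\Z^n} a_{\vec k}\,\overline{a_{\vec k'}}\;\E\big[g_{\vec k}(\omega)\,\overline{g_{\vec k'}(\omega)}\big],
\]
and observe that $g_{\vec k}(\omega)\,\overline{g_{\vec k'}(\omega)}$ is a product of $2n$ factors taken from $\{g_l,\bar g_l\}_{l\in\Z}$: the $n$ factors coming from $g_{\vec k}$ carry the conjugation pattern $({\it i}_j)$, those coming from $\overline{g_{\vec k'}}$ carry the opposite pattern $(-{\it i}_j)$. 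By Wick's theorem for i.i.d.\ standard complex Gaussians, $\E\big[g_{\vec k}(\omega)\,\overline{g_{\vec k'}(\omega)}\big]$ equals the number of perfect matchings of these $2n$ factors into pairs, each pair made of one $g_l$ and one $\bar g_l$ with the \emph{same} index $l$; in particular it is a non-negative integer $\le C_n$ and it vanishes unless such a matching exists.

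The next step is to classify the matchings. Split the $2n$ factors into a ``left'' group (from $g_{\vec k}$) and a ``right'' group (from $\overline{g_{\vec k'}}$) and let $r$ be the number of matched pairs lying entirely on the left; a parity count forces the number of pairs lying entirely on the right to be $r$ as well, so that exactly $n-2r$ left factors are matched bijectively with $n-2r$ right factors. Read on $\vec k$, the left-internal pairs constitute precisely an $(X,Y)$-pairing in the sense of Definition \ref{pairing}, with $X=(l_1,\dots,l_r)$, $Y=(j_1,\dots,j_r)$, $|X|=|Y|=r$; symmetrically for $\vec k'$ on the right with data $(X',Y')$. The $n-2r$ cross matches force the coordinates of $\vec k$ indexed outside $X\cup Y$ to coincide, after a fixed permutation, with the coordinates of $\vec k'$ indexed outside $X'\cup Y'$; denote this common value by $\vec h\in\Z^{n-2r}$.

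For each fixed matching \emph{type} --- i.e.\ the internal-pairing data $(X,Y)$ on the left, $(X',Y')$ on the right, and the bijection between the free positions --- the corresponding contribution to the double sum factorizes as
\[
\sum_{\vec h\in\Z^{n-2r}}\Big(\sum_{\vec k} a_{\vec k}\Big)\,\overline{\Big(\sum_{\vec k'} a_{\vec k'}\Big)},
\]
where the inner sum over $\vec k$ ranges over those that are $(X,Y)$-pairing with $\vec k_{\{l_1,\dots,l_r,j_1,\dots,j_r\}}=\vec h$, and similarly for $\vec k'$. Since each matched covariance has modulus $1$ and there are only $C_n$ matching types, the triangle inequality reduces the whole expression to a bounded sum of such product terms, each of which I would control by Cauchy--Schwarz in $\vec h$ and then $|AB|\le\tfrac12(|A|^2+|B|^2)$ to symmetrize in $\vec k\leftrightarrow\vec k'$. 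Summing over $(X,Y)$ and over $r$ yields the right-hand side of the proposition; the fully cross-matched case $r=0$ (the $X=Y=\emptyset$ term) contributes the diagonal sum $\sum_{\vec h\in\Z^n}|a_{\vec h}|^2$, which is of the same form.

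The step I expect to be the real obstacle is the combinatorial bookkeeping: keeping the conjugation signs ${\it i}_j$ straight so that only $g$--$\bar g$ pairs with equal indices survive, justifying the left/right parity that gives the common value $r$, and --- most delicate --- checking that the cross matches genuinely permit collapsing the double sum over $(\vec k,\vec k')$ into a single sum over the free vector $\vec h$, which relies on the free parts of $\vec k$ and $\vec k'$ being equal as vectors up to a prescribed permutation. Everything else is routine. Alternatively, the statement is the $p=2$ instance of the large-deviation bounds of \cite{DNY1,DNY2} and could simply be quoted; I would nevertheless include the short self-contained argument above, since the second-moment case is elementary.
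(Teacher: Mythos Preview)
The paper does not give a proof: immediately before the statement it says the bound is ``a special case of the more general large deviation results proved in \cite{DNY1} and \cite{DNY2}'' and leaves it at that. Your self-contained Wick/Isserlis argument is the standard elementary route for the $p=2$ case and is correct; you yourself note at the end that one could simply quote \cite{DNY1,DNY2}, which is exactly what the paper does. So your proposal supplies what the paper omits.

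One point needs slightly more care. When you say ``the left-internal pairs constitute precisely an $(X,Y)$-pairing in the sense of Definition~\ref{pairing}'', this is not literally true: a Wick matching with left-internal pairs at positions $(X,Y)$ only forces $k_{l_m}=k_{j_m}$; it does \emph{not} forbid further coincidences among the remaining indices, whereas Definition~\ref{pairing} does. So your inner sum runs over a strictly larger set than the $(X,Y)$-pairings appearing on the right-hand side of the proposition. The fix is routine --- any $\vec k$ satisfying the bare pair constraint at $(X,Y)$ is an $(X'',Y'')$-pairing for some maximal matching $(X'',Y'')\supseteq(X,Y)$ in the bipartite coincidence graph (maximality of a matching is exactly the condition ``no edge between unmatched vertices''), and the excess is absorbed into the higher-$r$ summands at the cost of enlarging $C$ --- but since you already flag the combinatorial bookkeeping as the real obstacle, it is worth naming this as the specific place where it bites. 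Your inclusion of the $r=0$ (fully cross-matched) contribution is correct and in fact needed for the applications in Section~\ref{almostinv}; the displayed range $r\in\{1,\dots,[\tfrac n2]\}$ in the proposition appears to omit it only by a slip.
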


\section{Almost invariance of $\rho_{2,N,R}$ along the flow $\Phi_N(t)$}\label{almostinv}

Due to the lack of invariance of the energy $E_{2j+1}(u)$, for $j\in\{1,\dots,n\}$ along the flow $\Phi_N(t)$, we have that the measures $\rho_{n, N,R}$
are not invariant along this flow. However the 
next proposition shows that those measures are almost invariant as long as $N\rightarrow \infty$
\begin{prop}\label{impo}
Let $n\geq 2$ be an integer and $R,T>0$ be given, then
\begin{equation}
\label{almostconv}
\sup_{\substack{t\in [0, T]\\ A\in \mathcal B(H^{\sigma}), \hbox{ } \sigma<\frac {2n-1}2} }\Big |\rho_{n,R,N} (\Phi_N(t) A)
- \rho_{n,R,N}(A)\Big|
\overset{N\rightarrow \infty} \longrightarrow 0
\end{equation}
where $\mathcal B(H^{\sigma})$ denotes the Borel sets in $H^\sigma$.
\end{prop}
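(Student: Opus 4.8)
Here is how I would approach Proposition \ref{impo}.

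\emph{Reduction.} The flow $\Phi_N(t)$ respects the splitting $u=\Pi_Nu+\Pi_{>N}u$: on $\Pi_{>N}u$ it is the linear KdV propagator $W(t)$, which preserves $\mu_n$ (it only multiplies each Fourier coefficient by a phase), while on the $(2N+1)$ complex coordinates $\{\widehat u(j)\}_{|j|\le N}$ it is the Hamiltonian flow of $E_4(\Pi_N\cdot)$ for the restricted symplectic form, hence preserves Lebesgue measure by Liouville. On the low modes $\mu_n$ has Lebesgue density $\propto e^{-G_N}$, $G_N(u):=\|\Pi_Nu\|_{L^2}^2+\|\partial^n\Pi_Nu\|_{L^2}^2$, and since $R_n+\|\partial^n\cdot\|_{L^2}^2=E_{2n+1}$,
\[ F_{n,R,N}(u)\,e^{-G_N(u)}=\Psi_N(\Pi_Nu),\qquad \Psi_N(w):=\Big(\prod_{l=0}^{n-1}\chi_R\big(E_{2l+1}(w)\big)\Big)e^{-E_1(w)}\,e^{-E_{2n+1}(w)}, \]
so $\Psi_N(w)=\widetilde\Psi\big(E_1(w),E_3(w),\dots,E_{2n+1}(w)\big)$ for a smooth $\widetilde\Psi$, compactly supported in its first $n$ arguments and equal to $(\text{bounded})\times e^{-E_{2n+1}}$ in the last. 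Using Liouville on the low modes, invariance of $\mu_n$ under $W(t)$ on the high modes, writing $\Psi_N(\Pi_N\Phi_N(t)u)-\Psi_N(\Pi_Nu)=\int_0^t\frac{d}{ds}\Psi_N(\Pi_N\Phi_N(s)u)\,ds$, and undoing the (measure preserving) flow inside the $s$–integral to kill its $s$–dependence, one gets $\sup_{A,\,t\le T}|\rho_{n,R,N}(\Phi_N(t)A)-\rho_{n,R,N}(A)|\le T\,\eta_N$ with
\[ \eta_N=\int\sum_{l=1}^{n}\Big|\partial_l\widetilde\Psi\big(E_1(\Pi_Nu),\dots,E_{2n+1}(\Pi_Nu)\big)\Big|\,\big|\mathcal E_{N,l}(\Pi_Nu)\big|\,e^{G_N(u)}\,d\mu_n(u),\quad \mathcal E_{N,l}(w):=\tfrac{d}{dt}\big|_{t=0}E_{2l+1}\big(\Phi_N(t)w\big); \]
the $l=0$ term is absent because $E_1(\Pi_N\Phi_N(t)u)=\|\Pi_N\Phi_N(t)u\|_{L^2}^2$ is conserved along $\Phi_N$. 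On the support of $\partial_l\widetilde\Psi$ the cut–offs force, via \eqref{ENL2}, $\|\Pi_Nu\|_{H^{n-1}}\le C(R)$, and there $e^{-E_{2n+1}(\Pi_Nu)}e^{\|\partial^n\Pi_Nu\|_{L^2}^2}=e^{-R_n(\Pi_Nu)}$ is bounded by \eqref{upperboundM}; hence $|\partial_l\widetilde\Psi|\,e^{G_N}\le C_R\,\mathbf 1_{\{\|\Pi_Nu\|_{H^{n-1}}\le C(R)\}}$ and it suffices to prove, for each $l=1,\dots,n$, that $\int_{\{\|\Pi_Nu\|_{H^{n-1}}\le C(R)\}}|\mathcal E_{N,l}(\Pi_Nu)|\,d\mu_n(u)\to0$ as $N\to\infty$.

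\emph{Structure of the conservation-law defect.} Since $E_{2l+1}$ is conserved by the true flow \eqref{mKdV0}, its variational gradient is orthogonal to $-\partial^3w+6|w|^2\partial w$ at every smooth $w$; subtracting, for band-limited $w=\Pi_Nw$,
\[ \mathcal E_{N,l}(w)=6\big\langle \nabla E_{2l+1}(w),\,(\Pi_N-I)(|w|^2\partial w)\big\rangle=-6\big\langle \Pi_{>N}\nabla E_{2l+1}(w),\,|w|^2\partial w\big\rangle. \]
The top (linear) part of $\nabla E_{2l+1}(w)$ is $\pm\partial^{2l}w$, which is killed by $\Pi_{>N}$ because $w$ is band-limited, so $\mathcal E_{N,l}(w)=-6\langle \Pi_{>N}\nabla R_l(w),\,|w|^2\partial w\rangle$: a multilinear expression of homogeneity $\ge6$ in $(w,\bar w)$, with boundedly many derivatives, carrying a frequency projection onto $\{|\xi|>N\}$.

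\emph{Two regimes.} For $l\le n-1$ the derivative budget of $\nabla R_l$ (at most $2l-2\le2n-4$ derivatives on at least three factors in $H^{n-1}$) leaves room, so $\|\Pi_{>N}\nabla R_l(w)\|_{L^2}\lesssim N^{-\epsilon}\|w\|_{H^{n-1}}^{\mathrm{deg}}$ for a small $\epsilon>0$ while $\||w|^2\partial w\|_{L^2}\lesssim\|w\|_{H^{n-1}}^3$, giving $|\mathcal E_{N,l}(\Pi_Nu)|\lesssim_R N^{-\epsilon}$ on the set, and the integral tends to $0$. The genuinely new case is $l=n$ (already for $n=2$): here the outer–derivative monomials of $\nabla R_n$ leave no room on $H^{n-1}$ — indeed the relevant limiting object is not even well defined deterministically — and this is precisely the term \eqref{nahm}, which in the real-valued case collapses after one integration by parts but in the complex case is the main term. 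For it I would use that $\mu_n$ is a probability measure and Cauchy–Schwarz,
\[ \int_{\{\|\Pi_Nu\|_{H^{n-1}}\le C(R)\}}\big|\mathcal E_{N,n}(\Pi_Nu)\big|\,d\mu_n(u)\le \big\|\mathcal E_{N,n}(\Pi_N\varphi)\big\|_{L^2_\omega}, \]
with $\varphi$ the random field \eqref{randomizedgen}, and then expand $\mathcal E_{N,n}(\Pi_N\varphi)$ into a linear combination of multilinear Gaussians $\prod_j g_{k_j}^{\pm1}$: the projection $\Pi_{>N}$ forces $\max_j|k_j|\gtrsim N$, the corresponding weight $\langle k_j\rangle^{-n}$ is $\lesssim N^{-n}$, and Proposition \ref{annal} (organizing the remaining frequencies in $\ell^2$ over all pairings) gives $\|\mathcal E_{N,n}(\Pi_N\varphi)\|_{L^2_\omega}\lesssim_R N^{-\delta}$ for some $\delta>0$. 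Summing the two regimes yields $\eta_N\lesssim_R N^{-\delta}$, hence $\sup_{A,\,t\le T}|\rho_{n,R,N}(\Phi_N(t)A)-\rho_{n,R,N}(A)|\lesssim_{R,T}N^{-\delta}\to0$.

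\emph{Main obstacle.} The first two steps are soft (Liouville on the truncation, invariance of the Gaussian under linear KdV, and the algebraic fact that each conserved energy annihilates the mKdV vector field), and the regime $l<n$ is routine product-estimate bookkeeping. The real work is the probabilistic bound on the top defect $\mathcal E_{N,n}$: identifying the precise multilinear structure (which sub-product carries $\Pi_{>N}$, how the derivatives are distributed so that the forced high frequency comes with a net positive power of $\langle k\rangle^{-1}$), and checking that the $N^{-\delta}$ gain survives the $\ell^2$ summation over the remaining frequencies in Proposition \ref{annal} — in particular controlling the term \eqref{nahm} that is absent in the real-valued setting. This is exactly the point the paper flags as making the complex case substantially harder.
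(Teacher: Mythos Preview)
Your reduction is essentially the paper's: Liouville on the low modes, invariance of $\mu_n$ under $W(t)$ on the high modes, differentiation in $t$ producing the conservation-law defects $\mathcal E_{N,l}$, and the observation that the quadratic part of $\nabla E_{2l+1}$ is killed by $\Pi_{>N}$ on band-limited inputs. Your treatment of the sub-top defects $l\le n-1$ by a deterministic $H^{n-1}$ product estimate is also what the paper does (cf.\ Proposition~\ref{E1star}).

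The genuine gap is in the top defect $l=n$. Your heuristic ``$\Pi_{>N}$ forces $\max_j|k_j|\gtrsim N$, the corresponding weight $\langle k_j\rangle^{-n}$ is $\lesssim N^{-n}$, and Proposition~\ref{annal} closes'' is not correct as stated and hides the actual obstruction. After the algebraic reductions (the analogues of \eqref{IPPn} and the orthogonality cancellations), the leading term is \eqref{nahm}, whose Fourier expansion is \eqref{lucccc}: the numerator carries $(j_1-j_2-j_3)^{n-1}(j_4-j_5+j_6)^{n-1}j_6$, i.e.\ two factors of size $\sim N^{n-1}$, so the net gain from a single large frequency is far from $N^{-n}$. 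More seriously, when you organize the $L^2_\omega$ norm via Proposition~\ref{annal} and sum over the paired index, the $1$-pairings $(j_2,j_6)$ and $(j_3,j_6)$ give \emph{divergent} sums if treated like the others (the paper says this explicitly before Lemma~\ref{ZNNN}). What rescues the estimate is not a weight count but a cancellation: in \eqref{lucccc} one is taking the \emph{imaginary} part (three derivatives bring down $i^3$), and on the generic part $\tilde{\mathcal I}_N^{j_2,j_6}$, $\tilde{\mathcal I}_N^{j_3,j_6}$ of those pairings a symmetry $(j_1,j_3,j_4,j_5)\leftrightarrow(j_5,j_4,j_3,j_1)$ pairs each term with its complex conjugate, so the imaginary part vanishes identically (Lemma~\ref{ZNNN}). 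Only the degenerate remainder $\hat{\mathcal I}_N^{j_2,j_6}$, $\hat{\mathcal I}_N^{j_3,j_6}$ (where a third index also equals $j$) is then small enough to close by counting (Lemma~\ref{ZN}).

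So your outline is right in spirit and matches the paper's architecture, but the crucial step you flag as ``the real work'' is not just bookkeeping with Proposition~\ref{annal}: you must first perform the integration-by-parts reductions that isolate \eqref{nahm}--\eqref{nahmo2}, then recognize that the dangerous pairings require the $\Im$-structure symmetry cancellation of Lemma~\ref{ZNNN}, and only afterwards does the $\ell^2$ summation in Proposition~\ref{annal} close. Without identifying that cancellation, the argument does not go through.
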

We shall give the proof of Proposition \ref{impo} in the specific case $n=2$, in Section \ref{RNU} we give a sketch of the modifications to be done to deal with the general case $n>2$. We decided to present the details in the case $n=2$ to make the argument  transparent without dealing with heavy notation. The general argument for  $n>2$  follows the same ideas with only extra minor technical issues that need to be addressed. 

To address the case $n=2$ we shall work with the energies:
$$E_1(u)=\int |u|^2,\quad
E_3(u) = \int |\partial u|^2 + |u|^4,\quad 
E_5(u)= \int  |\partial^2 u|^2 + 6 |\partial  u|^2 |u|^2 + |\partial (|u|^2)|^2 + 2 |u|^6$$
and we deal with the measures
$$d \rho_{2,R,N}= \chi_R\Big(E_1(\Pi_N(u)\Big) \times \chi_R\Big(E_3(\Pi_N u)\Big) \times \exp \Big(-\int 6 |\partial  u|^2 |u|^2 + |\partial (|u|^2)|^2 + 2 |u|^6\Big)d\mu_2$$
Let us first introduce two functionals:
$$E_{j,N}^*: H^{1}\rightarrow \R, \quad j=3,5$$
defined as follows
\begin{equation}\label{EjN}E_{j,N}^*(u)=\frac d{dt} E_j\Big (\Pi_N \Phi_N(t)u\Big )_{t=0}, \quad j=3,5, \quad u\in H^{1}.\end{equation}
\begin{prop}\label{E1star}
For every $R>0$ 
we have:
$$\Big \|\chi_R\Big(E_1(\Pi_Nu)\Big)\times \chi_R' \Big(E_3(\Pi_N u)\Big) \times E_{3,N}^*(u)\Big\|_{L^2(\mu_2)}
\overset{N\rightarrow \infty} \longrightarrow 0.$$ 
\end{prop}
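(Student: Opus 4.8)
\textbf{Proof proposal for Proposition \ref{E1star}.}

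The plan is to compute $E_{3,N}^*(u)$ explicitly using the equation \eqref{mKdVN}, identify which monomials survive after the obvious integrations by parts, and then estimate the $L^2(\mu_2)$ norm of the resulting expression via the pairing machinery of Proposition \ref{annal} together with the Gaussian decay \eqref{gaussianbound}. First I would write $v_N = \Pi_N \Phi_N(t)u$, so that $\partial_t v_N + \partial_x^3 v_N = 6\Pi_N(|v_N|^2 \partial_x v_N)$, and differentiate $E_3(v_N) = \int |\partial v_N|^2 + |v_N|^4$ in $t$ at $t=0$. The linear part $\partial_x^3 v_N$ contributes zero since $E_3$ is a conservation law for the linear flow (a clean integration by parts), so only the nonlinear term remains. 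Using $\Pi_N v_N = v_N$ and moving $\Pi_N$ onto the other factor, one finds that $E_{3,N}^*(u)$ is a finite linear combination of integrals of the schematic form $\int \Pi_N\big(\partial_x(\text{quintic in } v_N, \bar v_N)\big) \cdot (\text{test factor})$; crucially, $E_3$ being an exact conservation law for the \emph{full} (untruncated) mKdV means the ``bulk'' part of this expression — the part that would be present without the projectors $\Pi_N$ — vanishes identically, so $E_{3,N}^*(u)$ is genuinely a \emph{commutator-type} term, built out of expressions like $\Pi_{>N}$ applied to factors whose remaining frequencies are constrained to be $\gtrsim N$. This is exactly the structure that decays as $N\to\infty$.

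Next I would expand $u = \Pi_N \varphi$ (or rather evaluate on the support of the cutoffs, so $u$ ranges over functions with $E_1, E_3$ bounded by $\sim R$) in its Fourier/Gaussian series \eqref{randomizedgen}, so that $\chi_R(E_1(\Pi_N u))\,\chi_R'(E_3(\Pi_N u))\,E_{3,N}^*(u)$ becomes, after inserting the cutoffs, a sum over frequency vectors $\vec k$ with a hard constraint forcing at least one frequency to have modulus $\gtrsim N$, of monomials $a_{\vec k}\, g_{\vec k}(\omega)$ times bounded cutoff factors. The cutoffs $\chi_R, \chi_R'$ are harmless here: being bounded and supported where the relevant Sobolev norms are $O(R)$, they can be absorbed, at the cost of a constant $C(R)$, by the Gaussian tail bound \eqref{gaussianbound} (one splits according to $\|u\|_{H^{1-\epsilon}} \le M$ or $> M$ and uses $Ke^{-kM^2}$ on the tail). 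On the main part I would apply Proposition \ref{annal}: this bounds $\|\sum a_{\vec k} g_{\vec k}\|_{L^2_\omega}^2$ by a sum over pairings of squared $\ell^1$-sums of $|a_{\vec k}|$ over the unpaired frequencies. Because the coefficients $a_{\vec k}$ carry the weights $\langle k_j\rangle^{-2}$ from \eqref{randomizedgen} (here $n=2$) plus the derivative weight from $\partial_x$, plus the extra frequency-localization factor coming from $\Pi_{>N}$, each such sum converges and — thanks to the $\Pi_{>N}$ localization — is bounded by a quantity that tends to $0$ as $N\to\infty$ (e.g. a tail $\sum_{|k|\gtrsim N}\langle k\rangle^{-2\delta}$ for some $\delta>0$ coming from the gap between the $H^{3/2-}$ regularity of $\mu_2$ and the number of derivatives in $E_3$).

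The main obstacle I anticipate is the \textbf{bookkeeping of the quintic commutator term}: one must verify carefully that after all legitimate integrations by parts the surviving density in $E_{3,N}^*(u)$ really does carry a projector $\Pi_{>N}$ (or a difference $\Pi_N - I$) in a position that forces a high-frequency output, rather than a term that is merely $O(1)$ uniformly in $N$ — this is precisely where the integrability of the equation, i.e. the fact that $E_3$ is an exact conserved quantity for mKdV, must be used, and getting the algebra right (tracking the $\frac14 E_3$ normalization and the factor $6$ in \eqref{mKdVN}) is delicate. A secondary technical point is checking that the number of derivatives landing on any single factor, after redistributing via Proposition \ref{annal}'s pairing structure, never exceeds what the $H^{3/2-}$-regularity of $\mu_2$ can absorb; since $E_3$ has only one derivative per factor and the nonlinearity is subcritical at this regularity, this should work with room to spare, but it needs to be stated. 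Once these two points are settled, the convergence to $0$ is a matter of summing convergent series with an $N$-dependent tail, and the proof concludes.
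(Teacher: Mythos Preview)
Your computation of $E_{3,N}^*(u)$ is on the right track: using that $E_3$ is conserved along the full mKdV flow, the only contribution comes from the error term $-6\Pi_{>N}(|v_N|^2\partial v_N)$, and after an orthogonality argument (the quadratic piece $\int \partial \Pi_{>N}(\cdots)\,\partial \bar v_N$ vanishes since $\bar v_N$ is frequency--localized to $|k|\le N$) one is left with a sextic expression carrying a $\Pi_{>N}$, exactly as you describe.

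Where your proposal diverges from the paper is in the \emph{estimate} of this expression. You reach for the probabilistic pairing machinery of Proposition~\ref{annal} and Gaussian tail bounds. The paper instead observes that this is unnecessary here and gives a purely \emph{deterministic pointwise} bound: on the support of $\chi_R(E_1(\Pi_N u))\,\chi_R'(E_3(\Pi_N u))$ one has $\|\Pi_N u\|_{H^1}\le C(R)$, and then
\[
\Big|\Re\!\int \Pi_{>N}(|\Pi_N u|^2\partial\Pi_N u)\,\Pi_N\bar u\,|\Pi_N u|^2\Big|
\le \||\Pi_N u|^2\partial\Pi_N u\|_{L^2}\,\|\Pi_{>N}(\Pi_N\bar u\,|\Pi_N u|^2)\|_{L^2}
\le C\,N^{-1}\|\Pi_N u\|_{H^1}^6,
\]
using only $H^1\hookrightarrow L^\infty$, the algebra property of $H^1$, and the trivial gain $\|\Pi_{>N}f\|_{L^2}\le N^{-1}\|f\|_{H^1}$. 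So the whole quantity in the statement is bounded pointwise by $C(R)N^{-1}$, and the $L^2(\mu_2)$ norm goes to zero with no probability theory at all.

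Your approach would presumably also succeed, but it is the argument the paper reserves for the genuinely hard case, Proposition~\ref{E2star}, where $E_{5,N}^*(u)$ carries too many derivatives for the deterministic $H^1$ bound to close and the pairing estimates become essential. For Proposition~\ref{E1star} the derivative count is low enough (a single $\partial$ in a sextic expression) that the cutoffs alone do all the work; recognizing this saves the entire Fourier--Gaussian expansion and the pairing bookkeeping you anticipate as the main obstacle.
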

\begin{proof}
First of all let us compute a useful representation of the functional
$E_{3,N}^*(u)$.
Indeed we have from the expression
of $E_3(u)$ the following identity:
\begin{equation}\label{first*}
E_{3,N}^*(u)=2\Re \int \Big(\partial \partial_t  u_N(t)\partial \bar u_N(t)
+ 2 \partial_t u_N(t) \bar u_N(t) |u_N(t)|^2 \Big)_{t=0}\end{equation}
where we used the notation $u_N(t)=\Pi_N(\Phi_N(t)u)$.
Notice that $u_N(t)$ solves the following equation
$$\partial_t u_N +\partial_x^3 u_N =6 |u_N|^2 \partial_x u_N-6\Pi_{>N} (|u_N|^2 \partial_x u_N),$$
namely $u_N(t)$ is an exact solution for \eqref{mKdV0} with the extra the second term 
on the right hand side. This implies that, since $E_3(u)$ is exactly conserved along the flow of \eqref{mKdV0}, we can replace $\partial_t u_N$ in \eqref{first*}
with  the extra term $-6\Pi_{>N} (|u_N|^2 \partial_x u_N)$.
Then we get the following expression: 
\begin{multline}\label{first**}
E_{3,N}^*(u)=-12\Re \int \big(\partial (\Pi_{>N} (|u_N |^2 \partial u_N ) ) \partial (\bar u_N )+ 2\Pi_{>N} (|u_N |^2 \partial u_N ) \bar u_N  |u_N |^2)\big)
\\
=-24\Re \int  \Pi_{>N} (|u_N|^2 \partial u_N ) \bar u_N  |u_N |^2 
\end{multline}
where we used the following orthogonality argument in the last step
$$ \int \partial (\Pi_{>N} (|u_N |^2 \partial u_N ) ) \partial (\bar u_N )
=\int \Pi_{>N} f \Pi_{N}\bar g =0$$
where 
$f=\partial(|u_N |^2 \partial u_N),  \quad g=  \partial (u_N )$.
Recalling the definition of $u_N$ and using  \eqref{first**} we have 
$$E_{3,N}^*(u)=-24\Re \int  \Pi_{>N} (|\Pi_N u|^2 \partial \Pi_N u )  \Pi_N \bar u  |\Pi_N u |^2, $$
then the proposition will follow from proving that 
\begin{equation}\label{cuTTO}\Big \|\chi_R\Big(E_1(\Pi_N u)\Big)\times \chi_R' \Big(E_3(\Pi_N u)\Big)
\times \Big(\Re \int  \Pi_{>N} (|\Pi_Nu |^2 \partial \Pi_N u) \Pi_N \bar u |\Pi_N u|^2\Big)  \Big\|_{L^2(\mu_2)}
\overset{N\rightarrow \infty} \longrightarrow 0.\end{equation}
Thanks  to the cut-off  $\chi_R\Big(E_1(\Pi_N u)\Big)\times \chi_R' \Big(E_3(\Pi_N u)\Big)$ we can argue as in  the proof of Proposition \ref{ProPMN} and  obtain that  the function in \eqref{cuTTO} is supported in the set
$$\Big \{u\in H^1\hbox{ s.t. } \|\Pi_N u\|_{H^1}\leq C(R)\Big\}$$
for a suitable $C(R)>0$.
Hence it is enough to estimate the modulus of the function on its support, namely 
\begin{multline*}
\Big |\Re \int  \Pi_{>N} (|\Pi_Nu |^2 \partial \Pi_N u) \Pi_N \bar u |\Pi_N u|^2\Big |
\\\leq \|(|\Pi_Nu |^2 \partial \Pi_N u)\|_{L^2} \| \Pi_{>N} (\Pi_N \bar u |\Pi_N u|^2|)\|_{L^2}
\\\leq C \|\Pi_N u\|_{H^1} \|\Pi_N u\|_{L^\infty}^2 N^{-1} \| \Pi_N \bar u |\Pi_N u|^2|\|_{H^1}
\leq C N^{-1}\|\Pi_N u\|_{H^1}^6 \leq C(R) N^{-1},
\end{multline*}
where we used the embedding $H^1\subset L^\infty$ and the fact that $H^1$ is an algebra.

\end{proof}

We shall need the following proposition as well. 
\begin{prop}\label{E2star}
For every $R>0$
we have:
\begin{equation}\label{DIVO}\Big \|\chi_R\Big(E_1(\Pi_N u)\Big)\times \chi_R\Big (E_3(\Pi_N u)\Big) \times E_{5,N}^*(u) \Big\|_{L^2(\mu_2)}
\overset{N\rightarrow \infty} \longrightarrow 0.\end{equation}
\end{prop}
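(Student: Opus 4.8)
The plan is to mirror the structure of the proof of Proposition \ref{E1star}, but now for the fifth energy, which involves one more derivative and higher multilinearity, so the estimates are genuinely more delicate. First I would compute a useful representation of $E_{5,N}^*(u)$. As before, set $u_N(t) = \Pi_N(\Phi_N(t)u)$, which solves $\partial_t u_N + \partial_x^3 u_N = 6|u_N|^2\partial_x u_N - 6\Pi_{>N}(|u_N|^2\partial_x u_N)$. Since $E_5$ is exactly conserved along the genuine mKdV flow \eqref{mKdV0}, differentiating $E_5(u_N(t))$ at $t=0$ and using the chain rule, all contributions coming from the ``good'' part $6|u_N|^2\partial_x u_N$ of $\partial_t u_N$ cancel, and we are left with the contribution of the error term $-6\Pi_{>N}(|u_N|^2\partial_x u_N)$. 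Writing out the Fréchet derivative of $E_5$ from the explicit formula $E_5(u)=\int |\partial^2 u|^2 + 6|\partial u|^2|u|^2 + |\partial(|u|^2)|^2 + 2|u|^6$, one gets $E_{5,N}^*(u) = -12\,\Re\int \Pi_{>N}(|u_N|^2\partial u_N)\cdot \overline{G(u_N)}$, where $G(u_N)$ collects all the variational derivatives and is, schematically, a sum of terms of the form $\partial^4 u_N$, $\partial^2(|u_N|^2 u_N)$, $|u_N|^4 u_N$, etc. — each a polynomial in $u_N, \bar u_N$ and their derivatives up to order at most $3$ when hitting a fourth-order term, but with the crucial point that the truncation $\Pi_{>N}$ can be moved onto the smooth factor by self-adjointness: $\int \Pi_{>N}(f)\,\bar g = \int f\,\overline{\Pi_{>N}g}$, and $\Pi_{>N}$ applied to the ``balanced'' pieces gains a factor $N^{-1}$ at the expense of one derivative.

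Next I would exploit the cut-offs. Thanks to the factor $\chi_R(E_1(\Pi_N u))\chi_R(E_3(\Pi_N u))$ and the basic bound \eqref{ENL2} for $n=2$ — i.e. $\|u\|_{H^1}^2 \leq |E_3(u)| + p(\|u\|_{L^2})$ — the integrand in \eqref{DIVO} is supported on the set $\{u\in H^1 : \|\Pi_N u\|_{H^1}\leq C(R)\}$. On this set it suffices to bound $|E_{5,N}^*(u)|$ pointwise and show it is $O_R(N^{-\kappa})$ for some $\kappa>0$; the $L^\infty$ bound then gives the $L^2(\mu_2)$ convergence, since $\mu_2(H^1)=1$. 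The danger, compared with Proposition \ref{E1star}, is that the fourth-derivative term $\partial^4 u_N$ inside $G(u_N)$ would, at face value, require four derivatives on a factor we only control in $H^1$. The resolution is an integration by parts: in $\Re\int \Pi_{>N}(|u_N|^2\partial u_N)\,\overline{\partial^4 u_N}$, one distributes two derivatives onto each factor, turning it into $\Re\int \partial^2\Pi_{>N}(|u_N|^2\partial u_N)\,\overline{\partial^2 u_N}$, and here the orthogonality trick $\int \Pi_{>N}f\cdot \overline{\Pi_N g}=0$ with $g=u_N=\Pi_N u_N$ makes this particular term vanish identically, exactly as in \eqref{first**}. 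The remaining terms of $G(u_N)$ carry at most two derivatives distributed among several factors, so after moving $\Pi_{>N}$ onto a low-frequency-balanced factor (gaining $N^{-1}$ and costing one derivative), using $H^1\subset L^\infty$, the algebra property of $H^1$, and Hölder, each is controlled by $C N^{-1}\|\Pi_N u\|_{H^1}^{k}$ for some finite power $k$, hence by $C(R)N^{-1}$ on the support.

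The main obstacle I anticipate is bookkeeping the variational derivative $G(u_N)$ correctly: unlike $E_3$, the density of $E_5$ is a sum of five inequivalent terms, and one must verify that after the cancellation of the ``good'' flow contribution every surviving term either vanishes by the $\Pi_{>N}/\Pi_N$ orthogonality (this will happen precisely for the piece that is purely quadratic in $u_N$ up to derivatives, coming from $\int|\partial^2 u_N|^2$) or is genuinely subcritical — i.e. after an integration by parts no factor needs more than one derivative beyond what $H^1$ affords, so that moving $\Pi_{>N}$ across recovers the decaying power $N^{-1}$. I would organize this by listing the at most a dozen monomials produced by the variation, grouping them by total derivative count and number of factors, dispatching the quadratic one by orthogonality and each of the super-quadratic ones by the $H^1\hookrightarrow L^\infty$/algebra estimate, and finally invoking the support restriction to pass from the pointwise $O_R(N^{-1})$ bound to the stated $L^2(\mu_2)$ convergence.
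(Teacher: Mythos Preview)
Your derivation of $E_{5,N}^*(u)$ is correct in outline: the contribution of the genuine mKdV part cancels by conservation, and the piece coming from $\int|\partial^2 u_N|^2$ in the Fr\'echet derivative does vanish by the $\Pi_{>N}/\Pi_N$ orthogonality, exactly as you say. The term coming from $2|u|^6$ (only one derivative in total) can indeed be handled deterministically as in Proposition \ref{E1star}. The gap is in your treatment of the terms coming from the quartic part of the density, $6|\partial u|^2|u|^2 + |\partial(|u|^2)|^2$.

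After the orthogonality cancellation one is left with expressions such as
\[
\Re\int \Pi_{>N}\bigl(|\Pi_N u|^2\partial\Pi_N u\bigr)\,\Pi_N\bar u\,|\partial\Pi_N u|^2
\quad\text{and}\quad
\Re\int \partial\bigl(|\Pi_N u|^2\Pi_N\bar u\bigr)\,\partial\Pi_{>N}\bigl(|\Pi_N u|^2\partial\Pi_N u\bigr),
\]
which carry \emph{three} derivatives distributed among the factors. The cut-off only gives $\|\Pi_N u\|_{H^1}\le C(R)$. Your proposed move --- shift $\Pi_{>N}$ to the ``balanced'' factor and trade it for $N^{-1}$ times an $H^1$ norm --- does not close: in the first display above, $\|\Pi_{>N}(\Pi_N\bar u\,|\partial\Pi_N u|^2)\|_{L^2}\le N^{-1}\|\Pi_N\bar u\,|\partial\Pi_N u|^2\|_{H^1}$ requires $\|\partial\Pi_N u\|_{L^\infty}$ or $\|\Pi_N u\|_{H^2}$, neither of which is controlled by $H^1$. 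The same obstruction appears for every placement of $\Pi_{>N}$ and every integration by parts: with three derivatives and only $H^1$ control there is always one derivative too many. The paper flags exactly this point before the proof: ``we cannot proceed by deterministic estimates \dots\ we have to rely on a finer probabilistic argument.''

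What the paper actually does for these terms is to drop the cut-off (it is bounded), substitute the Gaussian series \eqref{randomizedgen} for $u$, and estimate directly the $L^2(\mu_2)=L^2_\omega$ norm of the resulting sextilinear Gaussian sum via the pairing bound of Proposition \ref{annal}. This exploits the randomness to gain, in effect, an extra half-derivative per factor that the deterministic $H^1$ bound cannot provide. Even so, one pairing case ($\tilde{\mathcal I}_N^{j_2,j_6}$ and $\tilde{\mathcal I}_N^{j_3,j_6}$) would diverge under the crude coefficient bound; it is disposed of by an exact symmetry cancellation (Lemma \ref{ZNNN}) that relies on the $\Re$ in physical space becoming an $\Im$ in Fourier space. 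Your proposal misses both the need for the probabilistic estimate and this cancellation.
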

The proof of Proposition \ref{E2star} requires more work than
the proof of Proposition \ref{E1star} since the functional 
$E_{5,N}^*(u) $ involves derivatives of higher order compared to 
$E_{3,N}^*(u)$, and hence we cannot proceed by deterministic estimates as done at the end of the proof of Proposition \ref{E1star} above. Indeed we have to rely on a finer probabilistic argument and robust cancelations that allow to control the higher order derivatives
involved in the expression of $E_{5,N}^*(u) $. 

We postpone the proof of Proposition \ref{E2star}, and  we first show how  Proposition \ref{impo} follows from Proposition \ref{E1star} and Proposition \ref{E2star}.

\begin{proof}[Prop. \ref{E1star} and \ref{E2star} $\Rightarrow$ Prop. \ref{impo}]
We shall prove for any given $R>0$ the following 
\begin{equation}\label{tneq0}
\sup_{\substack{t\in \R\\A\subset {\mathcal B}(H^{1})}} \Big |\frac d{dt} \rho_{2, R, N}(\Phi_N(t)A)\Big |
\overset{N\rightarrow \infty} \longrightarrow 0\end{equation}
and hence \eqref{almostconv} follows by integration in  time. Recall that $\mu_2(H^1)=1$, hence once we establish \eqref{tneq0}, then
\eqref{almostconv} follows for every $A\in {\mathcal B}(H^\sigma)$ with $\sigma<\frac 32$.\\
Next we shall use the splitting 
$$d\mu_2= \gamma_N \exp \Big(-\|\Pi_N u\|_{L^2}^2-\|\partial^2 \Pi_N u\|_{L^2}^2\Big) d{\mathcal L}_N  \otimes d \mu_{2,N}^\perp$$
where ${\mathcal L}_N=\prod_{i=-N}^N du_i$ is the classical Lebesgue measure 
on $\C^{2N+1}$,
$\gamma_N$ is a renormalization constant such that $\gamma_N e^{-\|\Pi_N u\|_{L^2}^2-\|\partial_x^2 \Pi_N u\|_{L^2}^2} d{\mathcal L}_N$ is a probability measure on $\C^{2N+1}$,
$\mu_{2,N}^\perp$ is the pushforward measure
of the following high frequency projection of the vector \eqref{randomizedgen} for $n=2$:
\begin{equation}\label{randomizedtrunc}
\varphi(x, \omega)=\frac 1{\sqrt{2\pi}}\sum_{j\in \Z_{>N}} 
\frac{g_j(\omega)}{\sqrt{1+j^4}} e^{inx}
\end{equation}
where $\Z_{>N}=\Z\cap \{z||z|>N\}$.
Next we compute
\begin{multline*}
\gamma_N^{-1}\frac d{dt}  \Big(\rho_{2,R,N}(\Phi_N(t)A)\Big)_{t=0}\\=\frac d{dt} \Big (
\int_{\Phi_N(t)A} \chi_R\Big(E_1(\Pi_N u)\Big) \times \chi_R \Big(E_3(\Pi_N u)\Big)
 \exp(-E_5(\Pi_N u)) d{\mathcal L}_N\otimes d \mu_{2,N}^\perp\Big)_{t=0}.
 \end{multline*}
Since the finite dimensional flow $\Pi_N \Phi_N(t)$ is Hamiltonian, we can rely on the Liouville Theorem and by the change of variable formula,
in conjunction with the fact that\footnote{Recall that $E_1(u)=\int |u|^2$ which is easy to show to be conserved along $\Phi_N(t)$.}  $\frac d{dt} E_1(\Pi_N \Phi_N(t)(u))=0$, we can continue above with 
\begin{multline*}
= \frac d{dt} \Big (\int_A \chi_R(E_1\Big(\Pi_N u)\Big) \times \chi_R \Big(E_3(\Pi_N
(\Phi_N(t) u)\Big)
\exp \Big(-E_5(\Pi_N (\Phi_N(t) u))\Big) 
d{\mathcal L}_N\otimes d \mu_{2,N}^\perp\Big )_{t=0}
\\=\int_A \chi_R\Big(E_1(\Pi_N u)\Big) \times \chi_R' \Big (E_3(\Pi_N
u)\Big ) E_{3,N}^*(u) 
\exp \Big(-E_{5}(\Pi_N u)\Big) d{\mathcal L}_N\otimes d \mu_{2,N}^\perp
\\+\int_A \chi_R\Big(E_1(\Pi_N u)\Big) \times \chi_R \Big (E_3(\Pi_Nu)\Big ) E_{5,N}^*(u)
\exp \Big(-E_{5}(\Pi_N u)\Big) d{\mathcal L}_N \otimes d\mu_{2,N}^\perp
\end{multline*}
where $E_{j,N}^*$ are introduced in \eqref{EjN}.
Summarizing we get
\begin{multline}\label{finalLLL}\frac d{dt}  \Big (\rho_{2,R,N}(\Phi_N(t)A)\Big )_{t=0}
=\int_A \chi_R\Big (E_1(\Pi_N u)\Big) \times \chi_R' \Big(E_3(\Pi_N
u)\Big) E_{3,N}^*(u) 
\exp \Big(
-R_5(\Pi_N u) \Big) d\mu_2 
\\+\int_A \chi_R\Big(E_1(\Pi_N u)\Big) \times \chi_R \Big (E_3(\Pi_Nu)\Big) E_{5,N}^*(u) \exp \Big(
-R_5(\Pi_N u) \Big)
d\mu_2
\end{multline}
where we recall
$$R_5(u)=
\int (\partial |u|^2)^2 + 6 |u|^2|\partial u|^2 + 2 |u|^6.
$$
Concerning the first term in the right hand side of \eqref{finalLLL}, we notice that in fact 
the integral can be taken on the set
$$\Big\{ u\in H^1 \hbox{ s.t. } \chi_R \Big (E_1(\Pi_N u)\Big) \times \chi_R' \Big(E_3(\Pi_N
u)\Big)\neq 0\Big\}$$
which, following the proof of \eqref{unif}, is contained in 
$$\Big \{u\in H^{1} \hbox{ s.t. }  \|\Pi_N(u)\|_{H^{1}}\leq C(R)\Big \},$$
for some $C(R)>0$.
Then we can estimate the first integral in the  right hand side of  \eqref{finalLLL} as follows by using the Cauchy-Schwartz inequality in $L^2(\mu_2)$:
\begin{multline*}\sup_{A\in {\mathcal B}(H^1)} \Big|\int_A \chi_R\Big (E_1(\Pi_N u)\Big) \times \chi_R' \Big(E_3(\Pi_N
u)\Big) E_{3,N}^*(u) 
\exp \Big(
-R_5(\Pi_N u) \Big) d\mu_2\Big | \\\leq \sup_{A\in {\mathcal B}(H^1)} \Big(\sqrt{\mu_2(A)}\Big)  \Big \|\chi_R\Big (E_1(\Pi_N u)\Big) \times \chi_R' \Big(E_3(\Pi_N
u)\Big) E_{3,N}^*(u) \Big \|_{L^2(\mu_2)}\sup_{u\in B_{C(R)}H^1} \Big[\exp \Big(
-R_5(\Pi_N u) \Big)\Big]\overset{N\rightarrow \infty} \longrightarrow 0,\end{multline*}
where we have used Proposition \ref{E1star}, Proposition \ref{ProPMN} for the boundedness of $\exp (
-R_5(\Pi_N u))$ on bounded sets of $H^1$ and the property $\mu_2(A)\leq 1$.
Arguing as above, by using Proposition \ref{E2star} instead of Proposition \ref{E1star} we conclude that 
also for  the second term of the right hand side of  \eqref{finalLLL} we have
$$\sup_{A\in {\mathcal B}(H^1)}  \Big |\int_A \chi_R\Big(E_1(\Pi_N u)\Big) \times \chi_R \Big (E_3(\Pi_Nu)\Big) E_{5,N}^*(u) \exp \Big(
-R_5(\Pi_N u) \Big)
d\mu_2\Big |\overset{N\rightarrow \infty} \longrightarrow 0.$$
Summarizing we have obtained
\begin{equation}\label{t=0}
\sup_{A\subset {\mathcal B}(H^{1})} \Big |\frac d{dt} \Big( \rho_{2, R,N}(\Phi_N(t)A)\Big)_{t=0}\Big |
\overset{N\rightarrow \infty} \longrightarrow 0.\end{equation}
Next we show \eqref{tneq0} by computing the time derivative at any given time $t=t_0\neq 0$.
By using the fact that $\Phi_N(t)$ is a flow we get:
 $$\frac d{dt}  \Big (\rho_{2,R,N}(\Phi_N(t) A)\Big )_{t=t_0}=\frac d{dt} \Big (\int_{\Phi_N(t) (\Phi_N(t_0)A)} d\rho_{2,R,N}\Big)_{t=0}$$
and hence we are reduced to the case of derivative at time $t=0$ provided that we switch from the set $A$ to the new Borel set $\Phi_N(t_0)A$.
In any case since the estimate \eqref{t=0} is uniform with respect  to the Borel set $A$, we conclude.
\end{proof}

\subsection{Proof of Prop. \ref{E2star}} 

Using a computation similar  to that  done to prove  Proposition \ref{E1star} we obtain 
the following expression for $E_{5,N}^*(u)$:
\begin{multline*}
E_{5,N}^*(u)=-12\Re \int \partial^{2} (\Pi_{>N} (|\Pi_N u|^2 \partial \Pi_N u ))
\partial^2 (\Pi_N \bar u)\\
 -72 \Re \int \Pi_{>N} (|\Pi_N u|^2 \partial \Pi_N u ) \Pi_N \bar u |\partial \Pi_N u|^2
\\- 72 \Re \int |\Pi_N u|^2 \partial \Pi_N \bar u \partial \Pi_{>N} (|\Pi_N u|^2 \partial \Pi_N u ) \\
- 24\Re  \int \partial (|\Pi_N u|^2) \partial \Big (\Pi_{>N} (|\Pi_N u|^2 \partial \Pi_N u)
 \Pi_N \bar u\Big )
\\- 72 \Re \int |\Pi_N u|^4 \Pi_N \bar u \Pi_{>N} (|\Pi_N u|^2 \partial \Pi_N u ) .
\end{multline*}
By orthogonality (see the proof of Proposition \ref{E1star}) 
the first term on the right hand side  is zero
and by developing in the fourth term the derivative of a product we obtain
\begin{multline*}
E_{5,N}^*(u)=
 -72 \Re \int \Pi_{>N} (|\Pi_N u|^2 \partial \Pi_N u ) \Pi_N \bar u |\partial \Pi_N u|^2
\\- 72 \Re \int |\Pi_N u|^2 \partial \Pi_N \bar u \partial \Pi_{>N} (|\Pi_N u|^2 \partial \Pi_N u ) \\
- 24\Re  \int \partial (|\Pi_N u|^2)  \partial \Pi_N \bar u \Pi_{>N} (|\Pi_N u|^2 \partial \Pi_N u)
\\
- 24\Re  \int \partial (|\Pi_N u|^2)  \Pi_N \bar u \partial \Pi_{>N} (|\Pi_N u|^2 \partial \Pi_N u)
\\- 72 \Re \int |\Pi_N u|^4 \Pi_N \bar u \Pi_{>N} (|\Pi_N u|^2 \partial \Pi_N u ) .
\end{multline*}
By elementary manipulation on the fourth term we get
\begin{multline*}
E_{5,N}^*(u)=
 -72 \Re \int \Pi_{>N} (|\Pi_N u|^2 \partial \Pi_N u ) \Pi_N \bar u |\partial \Pi_N u|^2
\\- 72 \Re \int |\Pi_N u|^2 \partial \Pi_N \bar u \partial \Pi_{>N} (|\Pi_N u|^2 \partial \Pi_N u ) \\
- 24\Re  \int \partial (|\Pi_N u|^2)  \partial \Pi_N \bar u \Pi_{>N} (|\Pi_N u|^2 \partial \Pi_N u)
\\
- 24\Re  \int \partial (|\Pi_N u|^2  \Pi_N \bar u) \partial \Pi_{>N} (|\Pi_N u|^2 \partial \Pi_N u)
\\
+ 24\Re  \int |\Pi_N u|^2  \partial \Pi_N \bar u \partial \Pi_{>N} (|\Pi_N u|^2 \partial \Pi_N u)
\\- 72 \Re \int |\Pi_N u|^4 \Pi_N \bar u \Pi_{>N} (|\Pi_N u|^2 \partial \Pi_N u ).
\end{multline*}
Notice that
\begin{equation*}
2\Re  \int |\Pi_N u|^2 \partial \Pi_N \bar u \partial (\Pi_{>N} (|\Pi_N u|^2 \partial \Pi_N u )=\int\partial \big (|\Pi_{>N} (|\Pi_N u|^2 \partial \Pi_N u)|^2)=0
\end{equation*}
then the second and fifth terms are zero, hence we get
\begin{multline}\label{expE2*}
E_{5,N}^*(u)=
 -72 \Re \int \Pi_{>N} (|\Pi_N u|^2 \partial \Pi_N u ) \Pi_N \bar u |\partial \Pi_N u|^2
\\
- 24\Re  \int \partial (|\Pi_N u|^2)  \partial \Pi_N \bar u \Pi_{>N} (|\Pi_N u|^2 \partial \Pi_N u)
\\
- 24\Re  \int \partial (|\Pi_N u|^2  \Pi_N \bar u) \partial \Pi_{>N} (|\Pi_N u|^2 \partial \Pi_N u)
\\- 72 \Re \int |\Pi_N u|^4 \Pi_N \bar u \Pi_{>N} (|\Pi_N u|^2 \partial \Pi_N u ).
\end{multline}
By following the same argument
as in Proposition \ref{E1star}, one can check easily, since there is only one term with a derivative, that
\begin{equation*}\Big \|\chi_R\Big(E_1(\Pi_N u)\Big)\times \chi_R \Big(E_3(\Pi_N u)\Big)
\times \Re \int |\Pi_N u|^4 \Pi_{>N} (|\Pi_N u|^2 \partial \Pi_N u ) \Pi_N \bar u\Big\|_{L^2(\mu)}
\overset{N\rightarrow \infty} \longrightarrow 0.
\end{equation*}
Hence we have to deal with  the first, second and third term on the right hand side
of \eqref{expE2*}. We notice that the second term in \eqref{expE2*}, up to a multiplicative constant, can be written as follows
\begin{equation*}
\Re  \int  \Pi_{>N} (|\Pi_N u|^2  \partial \Pi_N u ) \partial \Pi_N \bar u \Pi_N u
\partial \Pi_N \bar u\\+  \Re  \int  \Pi_{>N} (|\Pi_N u|^2  \partial \Pi_N u ) \partial \Pi_N \bar u \partial \Pi_N u \Pi_N \bar u
\end{equation*}
and the second term is exactly (up to a multiplicative constant) the first term in \eqref{expE2*}.
Hence it is sufficient to prove  the following facts:
\begin{equation}\label{nahm}
\Big\|\Re  \int \partial (|\Pi_N u|^2  \Pi_N \bar u) \partial ((\Pi_{>N} (|\Pi_N u|^2 \partial \Pi_N u ))
) \Big \|_{L^2(\mu_2)} \overset{N\rightarrow \infty} \longrightarrow 0,
\end{equation}
\begin{equation}\label{nahm1}\Big \|\Re \int \Pi_{>N} (|\Pi_N u|^2 \partial \Pi_N u ) \Pi_N \bar u |\partial \Pi_N u|^2\Big\|_{L^2(\mu_2)} \overset{N\rightarrow \infty} \longrightarrow 0
\end{equation}
\begin{equation}\label{nahmo2}
\Big \|\Re  \int  \Pi_{>N} (|\Pi_N u|^2  \partial \Pi_N u ) (\partial \Pi_N \bar u)^2 \Pi_N u \Big\|_{L^2(\mu_2)} \overset{N\rightarrow \infty} \longrightarrow 0,
\end{equation}
which  will imply \eqref{DIVO}, since the cut--off functions $\chi_R$ are bounded.
We shall prove first the more complicated estimate \eqref{nahm}, the other ones \eqref{nahm1} and \eqref{nahmo2} are easier and their proof will be sketched at the end of the section.
We replace in \eqref{nahm} the function $u$ by the random vector \eqref{randomizedgen} for $n=2$ 
and we are reduced to showing 
\begin{equation}\label{lucccc}
\Big \| \Im  \sum_{\substack{j_1,j_2,j_3,j_4,j_5,j_6\in {\Z_{\leq N}}\\ 
j_1-j_2-j_3+ j_4-j_5+j_6=0\\
|j_4-j_5+j_6|>N}} \frac{(j_1-j_2-j_3)(j_4-j_5+j_6) j_6}{\langle j_1^2\rangle\langle j_2^2\rangle
\langle j_3^2\rangle\langle j_4^2\rangle\langle j_5^2\rangle\langle j_6^2\rangle}g_{\vec j}
(\omega)\Big \|_{L^2_\omega}\overset{N\rightarrow \infty} \longrightarrow 0 \end{equation}
where $$\Z_{\leq N}=\Z\cap [-N, N] \hbox{ and } g_{\vec j}(\omega)=
g_{j_1}(\omega)\bar g_{j_2}(\omega)\bar g_{j_3}(\omega)g_{j_4}(\omega) \bar g_{j_5}(\omega)
g_{j_6}(\omega).$$
It is important  to note that we pass from taking the real part in \eqref{nahm} to taking the imaginary part in \eqref{lucccc} because taking the Fourier transform of three derivatives brings down $i^3=-i$. The fact that we take the imaginary part in \eqref{lucccc} is crucial since, as we will see below,  it reveals cancelations of terms that otherwise would be infinities. 
We introduce some notations to simplify the presentation:
\begin{equation}\label{tildeINrh}{\mathcal I}_N=\Big \{\vec j= (j_1, j_2, j_3, j_4, j_5, j_6)\in \Z_{\leq N}^6 \hbox{ s.t. } {\mathcal L} (\vec j)=0, \quad |\mathcal P(\vec j)|>N
\Big \}.
\end{equation}
where 
\begin{equation*}{\mathcal L}(\vec j)= j_1-j_2-j_3+j_4-j_5+j_6, \quad
{\mathcal P}(\vec j)=j_1-j_2-j_3.\end{equation*}
Notice that ${\mathcal I}_N$ is the set of indices where we are considering the sum in \eqref{lucccc}.
Next we split ${\mathcal I}_N$ in several subsets that will help us is applying Proposition \ref{annal}. In order to do that we use
Definition \ref{pairing} for $n=6$ and $i_1=i_4=i_5=1$, $i_2=i_3=i_5=-1$ and we split the set of indices according with the case that we have $0$-pairing, $1$-pairing ,
$2$-pairing, $3$-pairing .
\begin{remark}
If we have $\vec j\in \mathcal I_N$ with $2$-pairing or $3$-pairing then 
its contribution in \eqref{lucccc} is zero.
In fact if we have $2$-pairing in $(j_1,j_2,j_3,j_4,j_5,j_6)$ then, 
by the condition  $j_1-j_2-j_3+j_4-j_5+j_6=0$, we have necessarily $3$-pairing and then the associated 
Gaussian $g_{\vec j}(\omega)$ is real valued. Since we consider the imaginary part in \eqref{lucccc}, we get a trivial contribution to the sum.
For this reason it's enough to restrict to the sum over $\vec j\in {\mathcal I}_N$ which have $0$-pairing or $1$-pairing.
\end{remark}
Next we introduce the following sets:
$${\mathcal I}^{0}_N=\Big \{\vec j\in {\mathcal I}_N \hbox{ s.t. } \, \,  \vec j \,\,  \hbox{$0$-pairing}\Big \},$$
and for any $k\in\{1,4,6\}, l\in \{2,3,5\}$
$${\mathcal I}^{j_k,j_l}_N=\Big \{\vec j\in {\mathcal I}_N \hbox{ s.t. } (j_k, j_l) \hbox{ $1$-pairing}\Big \}.$$
We also introduce the splitting $${\mathcal I}^{j_k,j_l}_N= \tilde{{\mathcal I}}^{j_k,j_l}_N \cup \hat{{\mathcal I}}^{j_k,j_l}_N$$
where 
$$\tilde{\mathcal I}_N^{j_k,j_l}=\{\vec j\in {\mathcal I}_N \hbox{ s.t. }  (k, l) \hbox{ is $1$-pairing for } \vec j \hbox{ and }  \#\{j_1,j_2, j_3, j_4,j_5, j_6\}=5\}, \quad
\hat {{\mathcal I}}_N^{j_k,j_l}={\mathcal I}_N^{j_k,j_l}\setminus \tilde{\mathcal I}_N^{j_k,j_l}$$
We also introduce for every $\vec j=(j_1,j_2,j_3,j_4,j_5,j_6)$ the real number
$$a(\vec j)=\frac{(j_1-j_2-j_3)(j_4-j_5+j_6) j_6}{\langle j_1^2\rangle\langle j_2^2\rangle\langle j_3^2\rangle\langle j_4^2\rangle\langle j_5^2\rangle\langle j_6^2\rangle}$$
which is the coefficient that appears in \eqref{lucccc} in front of $g_{\vec j}(\omega)$.\\
Next we split the cases of $1$-pairing in two sub-cases: either the pairing occurs between indices $j_l,j_m$ with $l,m$ in the same triplet $\{1, 2, 3\}$ or 
$\{4, 5, 6\}$, or $l$ and $m$ belong to two different triplet.  
All the pairings of the first type are treated in Lemma \ref{equaltriplet}, while all the pairing of the second type are treated in Lemma \ref{differenttriplet} except
the pairing ${\mathcal I}_N^{j_2,j_6}$ and ${\mathcal I}_N^{j_3,j_6}$. In fact one can easily check that if we roughly apply the proof of Lemma \ref{differenttriplet} to 
${\mathcal I}_N^{j_2,j_6}$ and ${\mathcal I}_N^{j_3,j_6}$ then we get a divergent quantity.
For this reason we have to split 
$${\mathcal I}^{j_2,j_6}_N= \tilde{{\mathcal I}}^{j_2,j_6}_N \cup \hat{{\mathcal I}}^{j_2,j_6}_N, \quad {\mathcal I}^{j_3,j_6}_N= \tilde{{\mathcal I}}^{j_3,j_6}_N \cup \hat{{\mathcal I}}^{j_3,j_6}_N$$
and we first show in Lemma \ref{ZNNN} that by a symmetry argument the contribution to \eqref{lucccc} given by 
$\tilde{{\mathcal I}}^{j_2,j_6}_N $, $\tilde{{\mathcal I}}^{j_3,j_6}_N $ is equal to zero. Then the contribution of $\hat{{\mathcal I}}^{j_2,j_6}_N$, $\hat{{\mathcal I}}^{j_3,j_6}_N$
is considered in Lemma \ref{ZN}.

\begin{lemma}\label{ZNNN} We have the following identity:
\begin{equation}\label{luccccpart1new}
\Im  \sum_{\vec j\in  \Z^6_{\leq N}} 
a(\vec j) {\bf 1}_{ \tilde{\mathcal I}^{j_2,j_6}_N \cup  \tilde{\mathcal I}^{j_3,j_6}_N}(\vec j)
g_{\vec j}
(\omega)=0.\end{equation}
\end{lemma}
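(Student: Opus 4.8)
The plan is to prove \eqref{luccccpart1new} by a pairing (involution) argument. On each of the two index sets $\tilde{\mathcal I}^{j_2,j_6}_N$ and $\tilde{\mathcal I}^{j_3,j_6}_N$ I would exhibit a fixed‑point‑free involution of the set under which the coefficient $a(\vec j)$ (which is real) is invariant while the multilinear Gaussian $g_{\vec j}(\omega)$ is sent to its complex conjugate. Grouping the sum into the resulting two‑element orbits, each pair contributes $a(\vec j)\bigl(g_{\vec j}(\omega)+\overline{g_{\vec j}(\omega)}\bigr)=2\,a(\vec j)\,\Re g_{\vec j}(\omega)\in\R$, so its imaginary part is $0$; summing over orbits gives the claim.

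First I would reduce to a single set. The sets $\tilde{\mathcal I}^{j_2,j_6}_N$ and $\tilde{\mathcal I}^{j_3,j_6}_N$ are disjoint, since an element of both would satisfy $j_2=j_6$ and $j_3=j_6$, hence $j_2=j_3=j_6$ and $\#\{j_1,\dots,j_6\}\le 4$, contradicting the defining condition $\#\{j_1,\dots,j_6\}=5$. Moreover the transposition $\tau\colon(j_1,j_2,j_3,j_4,j_5,j_6)\mapsto(j_1,j_3,j_2,j_4,j_5,j_6)$ of the two ``$-$'' indices $j_2,j_3$ restricts to a bijection $\tilde{\mathcal I}^{j_2,j_6}_N\to\tilde{\mathcal I}^{j_3,j_6}_N$: the quantities ${\mathcal L}(\vec j)=j_1-j_2-j_3+j_4-j_5+j_6$ and ${\mathcal P}(\vec j)=j_1-j_2-j_3$ defining ${\mathcal I}_N$, the number $\#\{j_1,\dots,j_6\}$, and the $1$‑pairing structure are all symmetric under $\tau$. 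Since $a(\vec j)$ depends on $\vec j$ only through $j_1-j_2-j_3$, $j_4-j_5+j_6$, $j_6$ and the permutation‑invariant product $\prod_{i=1}^{6}\langle j_i^2\rangle$, and $g_{\tau\vec j}(\omega)=g_{\vec j}(\omega)$ (the factors $\bar g_{j_2}\bar g_{j_3}$ commute), the sum over $\tilde{\mathcal I}^{j_3,j_6}_N$ equals the sum over $\tilde{\mathcal I}^{j_2,j_6}_N$. By disjointness the left‑hand side of \eqref{luccccpart1new} is then $2\,\Im\sum_{\vec j\in\tilde{\mathcal I}^{j_2,j_6}_N}a(\vec j)g_{\vec j}(\omega)$, so it suffices to show this vanishes.

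Then, on $\tilde{\mathcal I}^{j_2,j_6}_N$, I would write $j_2=j_6=:m$; the constraint ${\mathcal L}(\vec j)=0$ becomes $j_1-j_3=j_5-j_4$, the numbers $j_1,j_3,j_4,j_5,m$ are pairwise distinct, ${\mathcal P}(\vec j)=j_1-m-j_3$, and $g_{\vec j}(\omega)=|g_m|^2\,g_{j_1}\bar g_{j_3}g_{j_4}\bar g_{j_5}$. Take $\sigma$ to reverse the quadruple, $(j_1,j_3,j_4,j_5)\mapsto(j_5,j_4,j_3,j_1)$, leaving $j_2,j_6$ fixed; then $\sigma^2=\mathrm{id}$ and $\sigma$ has no fixed point on $\tilde{\mathcal I}^{j_2,j_6}_N$ (that would force $j_1=j_5$). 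One checks $\sigma$ preserves the set: the constraint $j_1+j_4=j_3+j_5$ is unchanged under reversal, and using $j_5-j_4=j_1-j_3$ the new value of ${\mathcal P}$ is $j_5-m-j_4=(j_5-j_4)-m=(j_1-j_3)-m={\mathcal P}(\vec j)$, so $|{\mathcal P}|>N$ survives (distinctness, the bound $|\cdot|\le N$, and $j_2=j_6$ being clear). The same identity gives $j_5-m-j_4=j_1-m-j_3$ and $j_3-j_1+m=j_4-j_5+m$, so the numerator $(j_1-m-j_3)(j_4-j_5+m)\,m$ of $a(\vec j)$ is unchanged while the denominator is merely permuted; hence $a(\sigma\vec j)=a(\vec j)$. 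Finally $g_{\sigma\vec j}(\omega)=|g_m|^2\,g_{j_5}\bar g_{j_4}g_{j_3}\bar g_{j_1}=\overline{g_{\vec j}(\omega)}$. Summing over the orbits $\{\vec j,\sigma\vec j\}$ concludes.

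The only step I expect to require genuine care is verifying that $\sigma$ is compatible with the ``non‑generic'' frequency condition $|{\mathcal P}(\vec j)|>N$ together with ${\mathcal L}(\vec j)=0$: this is precisely why one must reverse the entire quadruple $(j_1,j_3,j_4,j_5)$ — so that the relation $j_5-j_4=j_1-j_3$ forced by ${\mathcal L}=0$ sends the new ${\mathcal P}$ back to the old one — rather than, say, simply interchanging the two ``$+$'' indices with the two ``$-$'' indices, which would not respect that cutoff. The remaining verifications are elementary bookkeeping with the explicit form of $a(\vec j)$ and $g_{\vec j}$.
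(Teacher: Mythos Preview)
Your proof is correct and is essentially the same as the paper's: both exhibit the fixed-point-free involution $(j_1,j_3,j_4,j_5)\mapsto(j_5,j_4,j_3,j_1)$ on $\tilde{\mathcal I}^{j_2,j_6}_N$ (and its analogue on $\tilde{\mathcal I}^{j_3,j_6}_N$), check that ${\mathcal P}$ is preserved via the relation $j_5-j_4=j_1-j_3$ coming from ${\mathcal L}=0$, and use that the Gaussian factor goes to its complex conjugate while the real coefficient is unchanged. The only cosmetic difference is that you first reduce $\tilde{\mathcal I}^{j_3,j_6}_N$ to $\tilde{\mathcal I}^{j_2,j_6}_N$ via the transposition $j_2\leftrightarrow j_3$, whereas the paper simply remarks that the two disjoint sets are handled by the same argument.
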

\begin{proof}
Notice that $\tilde{\mathcal I}^{j_2,j_6}_N$ and $ \tilde{\mathcal I}^{j_3,j_6}_N$ are disjoint, then we treat them separately.
We show that the contribution to the sum given by vectors $\vec j$ belonging to  $\tilde{\mathcal I}^{j_2,j_6}$ is zero, by a symmetry argument. The same argument 
works for the disjoint set
$\tilde{\mathcal I}^{j_3,j_6}$.
Notice that both vectors 
\begin{equation}\label{markbas}(j_1, j, j_3, j_4, j_5,j), \quad (j_5, j, j_4, j_3, j_1, j),\end{equation} 
belong
to $\tilde {\mathcal I}^{j_2,j_6}_N$ for any $j, j_1, j_3, j_4, j_5\in \Z_{\leq N}$, provided that 
\begin{equation}\label{identL}
j_1-j-j_3+j_4-j_5+j=0, \quad |j_1-j-j_3|>N, \#\{j_1, j_3, j_4, j_5, j\}=5 .
\end{equation} 
Moreover one can show that the vectors in \eqref{markbas} are different, since we have by assumption that $\#\{j_1, j_3, j_4, j_5\}=4$ and hence under any permutation
different from the identity (as done in \eqref{markbas} for the first, third, fourth, fifth entries), we get  different vectors.
The contribution of the first vector in \eqref{markbas} to the sum is given by:
\begin{equation*}\Im  \frac{(j_1-j-j_3)(j_4-j_5+j) j}{\langle j^2\rangle^2 \langle j_1^2\rangle \langle j_3^2\rangle \langle j_4^2\rangle \langle j_5^2\rangle}|g_j|^2 
g_{j_1} \bar g_{j_3} g_{j_4}\bar g_{j_5}
=-\Im  \frac{(j_1-j-j_3)^2 j}{\langle j^2\rangle^2 \langle j_1^2\rangle \langle j_3^2\rangle \langle j_4^2\rangle \langle j_5^2\rangle}|g_j|^2
g_{j_1} \bar g_{j_3} g_{j_4}\bar g_{j_5}
\end{equation*} where we used the first identity in \eqref{identL}. 
For the second vector in \eqref{markbas} we get by a similar argument the contribution
\begin{equation*}
\Im   \frac{(j_5-j-j_4)(j_3-j_1+j) j}{{\langle j^2\rangle^2 \langle j_1^2\rangle \langle j_3^2\rangle \langle j_4^2\rangle \langle j_5^2\rangle}|} |g_j|^2
g_{j_5} \bar g_{j_4} g_{j_3}\bar g_{j_1} = - \Im \frac{(j_3-j_1+j)^2 j}{\langle j^2\rangle^2 \langle j_1^2\rangle \langle j_3^2\rangle \langle j_4^2\rangle \langle j_5^2\rangle}|
|g_j|^2 g_{j_5} \bar g_{j_4} g_{j_3}\bar g_{j_1}.\end{equation*}
Hence the sum of the contributions given by the two  vectors 
\eqref{markbas} 
is given by  
$$- \frac{(j_1-j-j_3)^2 j}{\langle j^2\rangle^2 \langle j_1^2\rangle \langle j_3^2\rangle \langle j_4^2\rangle \langle j_5^2\rangle}\Im (|g_j|^2 g_{j_1} \bar g_{j_3} g_{j_4}\bar g_{j_5}+
|g_j|^2 g_{j_5} \bar g_{j_4} g_{j_3}\bar g_{j_1})=0.$$
\end{proof}

\begin{lemma}\label{ZN} For ${\mathcal Z}_N=\hat{\mathcal I}^{j_2,j_6}_N,  \hat {\mathcal I}^{j_3,j_6}_N$ we have
\begin{equation}\label{luccccpart1new}
\Big \|  \sum_{\vec j\in  \Z^6_{\leq N}} 
a(\vec j) {\bf 1}_{{\mathcal Z}_N}(\vec j)
g_{\vec j}
(\omega)\Big\|_{L^2_\omega}\overset{N\rightarrow \infty}\longrightarrow 0.\end{equation}
\end{lemma}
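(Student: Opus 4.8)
The plan is to deduce Lemma \ref{ZN} from the multilinear Gaussian bound of Proposition \ref{annal}, after using the resonance relation $\mathcal L(\vec j)=0$ together with the coincidence $j_2=j_6$ built into $\hat{\mathcal I}^{j_2,j_6}_N$ to simplify the coefficient $a(\vec j)$. It is enough to treat $\mathcal Z_N=\hat{\mathcal I}^{j_2,j_6}_N$, since $a(\vec j)$, $g_{\vec j}$ and the constraints defining $\mathcal I_N$ are all invariant under the transposition $j_2\leftrightarrow j_3$, so the case $\hat{\mathcal I}^{j_3,j_6}_N$ follows by relabelling. On $\mathcal Z_N$ one substitutes $j_6=j_2$; then $\mathcal L(\vec j)=0$ becomes the relation $j_1-j_3+j_4-j_5=0$ among the \emph{unpaired} frequencies and forces $j_4-j_5+j_6=-(j_1-j_2-j_3)=-\mathcal P(\vec j)$, so that
$$ a(\vec j)=\frac{-\,\mathcal P(\vec j)^2\, j_2}{\langle j_1^2\rangle\langle j_2^2\rangle^2\langle j_3^2\rangle\langle j_4^2\rangle\langle j_5^2\rangle},\qquad \mathcal P(\vec j)=j_1-j_2-j_3=j_5-j_4-j_2, $$
while membership in $\mathcal I_N$ reduces to $|\mathcal P(\vec j)|>N$, all frequencies in $\Z_{\le N}$.

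Next I would apply Proposition \ref{annal} with $n=6$ to $\sum_{\vec j}a(\vec j)\mathbf 1_{\mathcal Z_N}(\vec j)g_{\vec j}(\omega)$. Every vector with a nonzero coefficient is, in the sense of Definition \ref{pairing}, a $1$-pairing whose pair is $\{2,6\}$ — the extra coincidence that distinguishes $\hat{\mathcal I}$ from $\tilde{\mathcal I}$ is either same-sign ($j_1=j_4$ or $j_3=j_5$) or involves the common value $j_2=j_6$, and in no case produces a second pairing — so on the right-hand side of Proposition \ref{annal} only $r=1$ terms survive, the principal one being that attached to the pair $\{2,6\}$ (any further over-counted terms are supported on strictly more degenerate vectors and satisfy the same, easier, bounds). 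We are thus reduced to proving
$$ \sum_{\vec h=(j_1,j_3,j_4,j_5)\in\Z^4}\Big(\sum_{\substack{m\in\Z_{\le N}\\ (j_1,m,j_3,j_4,j_5,m)\in\mathcal Z_N}}\big|a(j_1,m,j_3,j_4,j_5,m)\big|\Big)^2\ \overset{N\rightarrow\infty}{\longrightarrow}\ 0. $$

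To estimate this I would partition $\mathcal Z_N$ according to which of the finitely many extra coincidences holds ($j_1=j_4$; $j_3=j_5$; or $j_k=m$ for some $k\in\{1,3,4,5\}$) and treat each piece separately. On a piece where one of $j_1,j_3,j_4,j_5$ is the largest frequency, I would bound $\mathcal P(\vec j)^2$ using the one of its two representations that does \emph{not} contain that frequency, so that it keeps the full weight $\langle\cdot^2\rangle^{-1}$; then $|\mathcal P|>N$ and $|j_i|\le N$ force a summed frequency to be $\gtrsim N$, one peels off a factor $N^{-\delta}$, and the residual sum converges because the coincidence has eliminated one free index (and, in the cases $j_k=m$, the inner variable $m$ is pinned). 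The delicate case is when the large frequency is the paired index $m=j_2=j_6$ itself, with $j_1,j_3,j_4,j_5\ll N$: there the point is that $|j_1-m-j_3|>N$ and $|m|\le N$ confine $m$ to a short window near $\pm N$ of width $\sim|j_1-j_3|$, so the inner sum over $m$ is essentially a single term of size $O(N^{-1})$; squaring and summing in $\vec h$, and using $j_3=j_5$ (or $j_1=j_4$) together with $j_1-j_3+j_4-j_5=0$ to make the remaining sum finite, yields a bound $O(N^{-2})$. The main obstacle is precisely this last configuration — choosing the right representation of $\mathcal P$ and counting the restricted ranges correctly — where crude absolute values just barely fail, so that the degeneracy $\#\le4$ must be exploited; it is to isolate it from the configuration handled by cancellation in Lemma \ref{ZNNN} that $\mathcal I_N^{j_2,j_6}$ was split into $\tilde{\mathcal I}_N^{j_2,j_6}\cup\hat{\mathcal I}_N^{j_2,j_6}$.
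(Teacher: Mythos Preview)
Your argument is correct and, in one respect, more complete than the paper's. Both proofs reduce to the case $\mathcal Z_N=\hat{\mathcal I}^{j_2,j_6}_N$, invoke Proposition~\ref{annal} at the level $r=1$ with the pair $\{2,6\}$, and then exploit the extra coincidence that distinguishes $\hat{\mathcal I}$ from $\tilde{\mathcal I}$. The difference lies in how that extra coincidence is handled.

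The paper only writes out the four cases $j_l=j_2=j_6=:j$ with $l\in\{1,3,4,5\}$, i.e.\ a \emph{triple} coincidence at the paired value. There the inner sum over the paired frequency collapses to a single term (since $m=j_l$ is fixed by $\vec h$), and the denominator acquires the strong weight $\langle j\rangle^{-6}$; together with the constraint $\max\{|j|,|j_{l_1}|,|j_{l_2}|,|j_{l_3}|\}\ge N/3$ this yields the crude pointwise bound
\[
|a(\vec j)|\le \frac{C}{\langle j_{l_1}\rangle\langle j_{l_2}\rangle\langle j_{l_3}\rangle\langle j\rangle^{3}},
\]
whose square sums to $O(N^{-1})$ directly, with no windowing needed. (This bound does use the relation $\mathcal L(\vec j)=0$ implicitly, to control the one remaining free index that is not already damped.)

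Your decomposition goes further and also treats the same-sign coincidences $j_1=j_4$ and $j_3=j_5$, which are legitimate elements of $\hat{\mathcal I}^{j_2,j_6}_N$ but do not appear in the paper's case list. These are precisely the configurations for which the inner sum over $m$ is \emph{not} pinned and the weight on $m$ is only $\langle m\rangle^{-4}$; your windowing argument --- $m$ confined to an interval of length $O(|j_1-j_3|)$ near $\pm N$, each term $O(N^{-1})$, and the outer sum reduced to two free indices by combining the coincidence with $j_1-j_3+j_4-j_5=0$ --- is exactly what is required there and gives $O(N^{-2})$. So what you call the ``delicate case'' is not an obstacle in the paper's four scenarios, but it is a genuine (and necessary) ingredient for the two scenarios the paper omits; your proof is the more careful of the two.
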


\begin{proof}

Let's consider the case ${\mathcal Z}_N=\hat{\mathcal I}^{j_2,j_6}_N$, the case ${\mathcal Z}_N=\hat{\mathcal I}^{j_3,j_6}_N$ is similar. 
We have to consider vectors $\vec j=(j_1,j_2,j_3,j_4,j_5,j_6)$ such that
$j_2=j_6=j_l=j$ for some $l\in\{1,3,4,5\}$. 
Next notice that by the property $|{\mathcal P}(\vec j)|>N$ we get
$$\max\{|j|, |j_{l_1}|, |j_{l_2}|, |j_{l_3}|\}\geq \frac N3$$ where $\{l_1, l_2, l_3\}=\{1,2,3,4,5,6\}\setminus \{2,6,l\}$.
Moreover in this case we have the crude bound
$$|a(\vec j)|\leq \frac{C}{\langle j_{l_1}\rangle \langle j_{l_2}\rangle \langle j_{l_3}\rangle \langle j\rangle^3 }.$$
Hence by Proposition \ref{annal} we easily get
that  the desired estimate for ${\mathcal Z}_N=\hat {\mathcal I}_N^{j_2,j_6}$, is reduced to
\begin{equation*}
\sum_{\substack{j,j_{l_1},j_{l_2}, j_{l_3}\in \Z_{\leq N}\\
\max\{|j|,|j_{l_1}|, |j_{l_2}|, |j_{l_3}|\}\geq \frac N3}} 
\frac{ 1}{\langle j_{l_1}\rangle^2 \langle j_{l_2}\rangle^2 \langle j_{l_3}^2\rangle \langle j\rangle^6}
=O(N^{-1}).
\end{equation*}

\end{proof}

Next we deal with all the remaining cases, namely $0$-paring and $1$-pairing except the ones considered in Lemma \ref{ZNNN} and Lemma \ref{ZN}.

\begin{lemma}\label{IN6}
We have the following limit
\begin{equation}\label{luccccpart1}
\Big \|  \sum_{\vec j\in \Z^6}
a(\vec j) {\bf 1}_{{\mathcal I}_N^0}(\vec j) g_{\vec j}
(\omega)\Big \|_{L^2_\omega}\overset{N\rightarrow \infty} \longrightarrow 0.\end{equation}
\end{lemma}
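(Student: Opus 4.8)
Since every $\vec j\in{\mathcal I}_N^0$ is $0$-pairing (Definition \ref{pairing}) for the sign pattern $i_1=i_4=i_6=1$, $i_2=i_3=i_5=-1$, the Gaussian monomial $g_{\vec j}$ has mean zero, and $\E[g_{\vec j}\,\overline{g_{\vec j'}}]$ vanishes unless $\vec j'$ is obtained from $\vec j$ by permuting the entries inside the index set $\{j_1,j_4,j_6\}$ and inside $\{j_2,j_3,j_5\}$; there are at most $3!\cdot 3!$ such $\vec j'$, and the corresponding moments are $O(1)$. Expanding the square and applying Cauchy--Schwarz on each such orbit (this is exactly the content of Proposition \ref{annal} in the $0$-pairing case) gives
\[
\Big\|\sum_{\vec j\in\Z^6} a(\vec j)\,{\bf 1}_{{\mathcal I}_N^0}(\vec j)\,g_{\vec j}(\omega)\Big\|_{L^2_\omega}^2 \;\lesssim\; \sum_{\vec j\in{\mathcal I}_N^0}|a(\vec j)|^2 .
\]
So the plan is to prove that $\sum_{\vec j\in{\mathcal I}_N^0}|a(\vec j)|^2\to 0$ as $N\to\infty$.

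\textbf{Step 2: pointwise bound on $a(\vec j)$.}
Using $\langle j^2\rangle=\sqrt{1+j^4}\simeq\langle j\rangle^2$, and recalling that on $\{{\mathcal L}(\vec j)=0\}$ one has $j_1-j_2-j_3=-(j_4-j_5+j_6)={\mathcal P}(\vec j)$, the definition of $a(\vec j)$ gives
\[
|a(\vec j)|^2 \;\lesssim\; \frac{(j_1-j_2-j_3)^2\,(j_4-j_5+j_6)^2\,j_6^2}{\prod_{i=1}^6\langle j_i\rangle^4}.
\]
Moreover $|{\mathcal P}(\vec j)|>N$ forces both $\max(|j_1|,|j_2|,|j_3|)>N/3$ and $\max(|j_4|,|j_5|,|j_6|)>N/3$, since $|j_1-j_2-j_3|\le 3\max(|j_1|,|j_2|,|j_3|)$ and likewise on the second triple.

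\textbf{Step 3: case analysis and summation.}
Split ${\mathcal I}_N^0$ into the nine regions determined by which index $a\in\{1,2,3\}$ realizes $\max(|j_1|,|j_2|,|j_3|)$ and which index $b\in\{4,5,6\}$ realizes $\max(|j_4|,|j_5|,|j_6|)$; on each region $|j_a|>N/3$. Bounding $(j_1-j_2-j_3)^2\le 9\langle j_a\rangle^2$ and $(j_4-j_5+j_6)^2\le 9\langle j_b\rangle^2$, and then solving the single linear constraint ${\mathcal L}(\vec j)=0$ for $j_6=-j_1+j_2+j_3-j_4+j_5$, one is left with a free sum over $(j_1,\dots,j_5)\in\Z_{\le N}^5$. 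When $b=6$ the factor $\langle j_6\rangle^4$ in the denominator exactly absorbs $\langle j_b\rangle^2 j_6^2=j_6^4$, leaving $\langle j_a\rangle^{-2}$ times $\langle\cdot\rangle^{-4}$ in each of the four remaining variables; when $b\in\{4,5\}$ one absorbs $j_6^2$ against $\langle j_6\rangle^4$, discards the leftover $\langle j_6\rangle^{-2}\le1$, and is left with $\langle j_a\rangle^{-2}\langle j_b\rangle^{-2}$ times $\langle\cdot\rangle^{-4}$ in the remaining three variables. In every case
\[
\sum_{\vec j\in{\mathcal I}_N^0,\ \text{region }(a,b)}|a(\vec j)|^2 \;\lesssim\; \Big(\sum_{|j_a|>N/3}\langle j_a\rangle^{-2}\Big)\cdot O(1)\;=\;O(N^{-1}),
\]
and summing the nine regions yields $\sum_{\vec j\in{\mathcal I}_N^0}|a(\vec j)|^2=O(N^{-1})\to0$, which by Step 1 proves \eqref{luccccpart1}.

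\textbf{Main obstacle.}
The only genuinely delicate point is the bookkeeping of the extra factor $|j_6|$ in the numerator of $a(\vec j)$ (coming from the third derivative landing on the last factor): unlike the symmetric factors $(j_1-j_2-j_3)$ and $(j_4-j_5+j_6)$ it has no companion, so one must be careful to allocate it either against $\langle j_6\rangle^4$ directly or against the size of whichever index dominates the second triple, and to check that after eliminating $j_6$ through ${\mathcal L}(\vec j)=0$ the residual five-fold sum converges uniformly in $N$ and still carries the gain $N^{-1}$. Otherwise this $0$-pairing case is the cleanest of all the cases, since the absence of internal pairings means there are no near-diagonal resonant interactions to track, in contrast with the $1$-pairing contributions treated in Lemmas \ref{ZNNN}--\ref{ZN}.
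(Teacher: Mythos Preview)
Your proof is correct and follows the same strategy as the paper: reduce via Proposition \ref{annal} to the $\ell^2$ sum $\sum_{\vec j\in{\mathcal I}_N^0}|a(\vec j)|^2$, then exploit $|\mathcal{P}(\vec j)|>N$ to extract $O(N^{-1})$. The paper avoids your nine-case split by using the cruder single bound $|a(\vec j)|\lesssim \langle j_1\rangle^{-1}\langle j_2\rangle^{-1}\langle j_3\rangle^{-1}\langle j_4\rangle^{-1}\langle j_5\rangle^{-1}$ (absorbing $|(j_1-j_2-j_3)(j_4-j_5+j_6)j_6|\lesssim \langle j_1\rangle\langle j_2\rangle\langle j_3\rangle\langle j_4\rangle\langle j_5\rangle\langle j_6\rangle^2$ against the full denominator at once), after which the constraint $\max\{|j_1|,|j_2|,|j_3|\}>N/3$ alone yields the result without any case analysis.
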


\begin{proof} 
We use the following crude bound
$$|a(\vec j)|\leq \frac C{\langle j_1\rangle\langle j_2\rangle
\langle j_3\rangle\langle j_4\rangle\langle j_5\rangle}.$$
Hence by Proposition \ref{annal}
it is sufficient to prove:
\begin{equation}\label{sumsquares}
\sum_{\vec j\in {\mathcal I}_N^0} 
\frac{1}{\langle j_1\rangle^2\langle j_2\rangle^2
\langle j_3\rangle^2\langle j_4\rangle^2\langle j_5\rangle^2}
\overset{N\rightarrow \infty} \longrightarrow 0.
\end{equation}
From  the constraint $|\mathcal P(\vec j)|>N$ we have
$|j_1-j_2-j_3|>N$ and hence necessarily
$\max\{|j_1|, |j_2|, |j_3|\}\geq \frac N3$, and also by the constraint ${\mathcal L}(\vec j)=0$ we obtain that 
the sum is reduced to the five indices $j_1,j_2,j_3,j_4,j_5$, since $j_6=-j_1+j_2+j_3-j_4+j_5$. 
Hence we conclude from the following bound
$$\sum_{\substack{j_1, j_2,j_3,j_4,j_5\in \Z_{\leq N}\\\max\{|j_1|, |j_2|, |j_3|\}>\frac N3}} 
\frac{1}{\langle j_1\rangle^2\langle j_2\rangle^2
\langle j_3\rangle^2\langle j_4\rangle^2\langle j_5\rangle^2}
=O(N^{-1}).$$

\end{proof}

\begin{lemma}\label{differenttriplet}
We have the following limit for every ${\mathcal Z}_N={\mathcal I}^{j_1,j_5}_N, {\mathcal I}^{j_2,j_4}_N, {\mathcal I}^{j_3,j_4}_N$:
\begin{equation}\label{luccccpart1new}
\Big\| \sum_{\vec j\in \Z_{\leq N}^6} a(\vec j) {\bf 1}_{{\mathcal Z}_N}(\vec j) g_{\vec j}
(\omega)\Big\|_{L^2_\omega}\overset{N\rightarrow \infty} \longrightarrow 0.\end{equation}
\end{lemma}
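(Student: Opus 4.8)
The plan is to reduce, via the large--deviation estimate of Proposition~\ref{annal}, the squared $L^2_\omega$--norm in the statement to a purely deterministic lattice sum, and then to extract a negative power of $N$ from the frequency constraint $|{\mathcal P}(\vec j)|>N$, in the spirit of Lemma~\ref{IN6} but carrying along the single $1$--pairing. By the remark that discards $2$-- and $3$--pairings, on each of ${\mathcal I}^{j_1,j_5}_N$, ${\mathcal I}^{j_2,j_4}_N$, ${\mathcal I}^{j_3,j_4}_N$ every $\vec j$ realizes exactly the one $1$--pairing $(j_k,j_l)$ defining the set; hence in the right--hand side of Proposition~\ref{annal} only the term $r=1$, $(X,Y)=\big((k),(l)\big)$ contributes. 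Writing $m:=j_k=j_l$ for the paired value and $\vec h\in\Z^4$ for the remaining four coordinates, one is reduced to showing
$$\sum_{\vec h}\Big(\sum_{m}|a(\vec j)|\,{\bf 1}_{{\mathcal Z}_N}(\vec j)\Big)^2\overset{N\rightarrow \infty} \longrightarrow 0 .$$
The structural point I would exploit is that in all three cases the relation ${\mathcal L}(\vec j)=0$ does \emph{not} involve $m$ (for ${\mathcal Z}_N={\mathcal I}^{j_1,j_5}_N$ it reads $j_4+j_6=j_2+j_3$): it eliminates one coordinate of $\vec h$ and leaves three free summation indices, while ${\mathcal P}(\vec j)$ \emph{does} involve $m$ (it is $m-j_2-j_3$ for $(j_1,j_5)$, and $j_1-m-j_3$, $j_1-j_2-m$ for $(j_2,j_4)$, $(j_3,j_4)$ respectively).

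Next I would insert the crude bound $|a(\vec j)|\le C\langle j_1\rangle^{-1}\langle j_2\rangle^{-1}\langle j_3\rangle^{-1}\langle j_4\rangle^{-1}\langle j_5\rangle^{-1}$ already used in Lemma~\ref{IN6}. In each of the three cases this specializes to $|a(\vec j)|\lesssim \langle m\rangle^{-2}$ times a product of three factors $\langle\cdot\rangle^{-1}$ over coordinates of $\vec h$; the decisive feature is that the repeated index $m$ carries the weight $\langle m\rangle^{-2}$, so that $\sum_m\langle m\rangle^{-2}<\infty$ --- and this is exactly what fails for the pairings $(j_2,j_6)$, $(j_3,j_6)$, where the crude bound only produces $\langle m\rangle^{-1}$, which is why those require the separate arguments of Lemmas~\ref{ZNNN}--\ref{ZN}. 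Then $\sum_{m:\,|{\mathcal P}(\vec j)|>N}\langle m\rangle^{-2}$ is controlled by splitting according to the size of the $\vec h$--linear combination entering ${\mathcal P}$: if it is $\le N/2$ then $|m|>N/2$ and the $m$--sum is $O(N^{-1})$; otherwise one of the two remaining frequencies in ${\mathcal P}$ is $\gtrsim N$ and the $m$--sum is $O(1)$. Squaring and summing the absolutely convergent series $\sum_{\vec h}\langle\cdot\rangle^{-2}\langle\cdot\rangle^{-2}\langle\cdot\rangle^{-2}$ over the three free indices gives $O(N^{-1})$ in both regimes, which proves the limit.

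The three pairings are genuinely parallel after relabeling, so I would write ${\mathcal I}^{j_1,j_5}_N$ in full and indicate the purely notational changes for the other two. The one delicate point --- which I would flag as the main obstacle --- is the bookkeeping in applying Proposition~\ref{annal}: one must verify that for these specific pairings the paired variable $m$ is genuinely free, i.e.\ not forced to coincide with a coordinate of $\vec h$ (in contrast with the $\hat{\mathcal I}^{j_2,j_6}_N$ situation of Lemma~\ref{ZN}), so that the reduction really yields the $\big(\sum_m|a|\big)^2$ structure with summable weight $\langle m\rangle^{-2}$, and that ${\mathcal L}(\vec j)=0$ removes a coordinate of $\vec h$ rather than constraining $m$. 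Once these combinatorial facts are in place, the remaining estimate is routine.
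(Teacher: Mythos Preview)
Your proposal is correct and follows essentially the same route as the paper's proof. The paper also treats ${\mathcal I}^{j_1,j_5}_N$ in detail (declaring the other two cases similar), uses the same crude bound $|a(\vec j)|\le C\langle j\rangle^{-2}\langle j_2\rangle^{-1}\langle j_3\rangle^{-1}\langle j_4\rangle^{-1}$ on the paired variable $j=j_1=j_5$, observes that ${\mathcal L}(\vec j)=0$ eliminates $j_6$ leaving three free outer indices, and splits according to which of $|j|,|j_2|,|j_3|$ exceeds $N/3$ to extract the $O(N^{-1})$ decay; your dichotomy on the size of $j_2+j_3$ is an equivalent formulation of the same split.
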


\begin{proof} 
We focus on the case ${\mathcal Z}_N={\mathcal I}^{j_1,j_5}_N$ (the remaining cases are similar).
Hence we deal with vectors of the type
$$\vec j=(j,j_2,j_3,j_4,j,j_6)\in {\mathcal I}^{j_1,j_5}_N.$$
First of all we have the following crude bound 
$$a(\vec j)\leq \frac{|j-j_2-j_3||j_4-j+j_6| |j_6|}{\langle j^2\rangle^2\langle j_2^2\rangle\langle j_3^2\rangle\langle j_4^2\rangle\langle j_6^2\rangle}
\leq \frac{C}{\langle j\rangle^2\langle j_2\rangle
\langle j_3\rangle\langle j_4\rangle}.$$ 
Notice that the constraint $|\mathcal P(\vec j)|>N$ implies $\max\{|j|, |j_2|, |j_3|\}>\frac N3$. Moreover by the constraint 
$\mathcal L(\vec j)=0$ we have that in fact the sum depends on four indexes $j, j_2, j_3, j_4$, since $j_6=-j+j_2+j_3-j_4+j$.
Hence we can combine the information above with Proposition \ref{annal} and we conclude by the following bound:
\begin{multline*}
\sum_{\substack{j_2,j_3,j_4\in \Z_{\leq N}\\|j_2|>\frac N3}} \Big( \sum_{j\in \Z_{\leq N}}  \frac{1}{\langle j\rangle^2\langle j_2\rangle
\langle j_3\rangle\langle j_4\rangle} \Big)^2+C \sum_{\substack{j_2,j_3,j_4\in \Z_{\leq N}\\|j_3|>\frac N3}} \Big( \sum_{j\in \Z_{\leq N}}  \frac{1}{\langle j\rangle^2\langle j_2\rangle
\langle j_3\rangle\langle j_4\rangle} \Big)^2 \\
+ C \sum_{j_2,j_3,j_4\in \Z_{\leq N}} \Big( \sum_{\substack{j\in \Z_{\leq N}\\|j|>\frac N3}}  \frac{1}{\langle j\rangle^2\langle j_2\rangle
\langle j_3\rangle\langle j_4\rangle} \Big)^2=O(N^{-1}).
\end{multline*}

\end{proof}

\begin{lemma}\label{equaltriplet} We have the following limit for every ${\mathcal Z}_N={\mathcal I}^{j_1,j_2}_N,  {\mathcal I}^{j_1,j_3}_N, {\mathcal I}^{j_4,j_5}_N, {\mathcal I}^{j_5,j_6}_N$:
\begin{equation}\label{luccccpart1newnew}
\Big \| \sum_{\vec j\in \Z^6_{\leq N}} a(\vec j) {\bf 1}_{{\mathcal Z}_N}(\vec j) g_{\vec j}
(\omega)\Big \|_{L^2_\omega}\overset{N\rightarrow \infty}\longrightarrow 0.\end{equation}
\end{lemma}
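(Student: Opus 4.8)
The plan is to show that, once the $1$-pairing is imposed, \emph{each} of the four index sets ${\mathcal I}^{j_1,j_2}_N$, ${\mathcal I}^{j_1,j_3}_N$, ${\mathcal I}^{j_4,j_5}_N$, ${\mathcal I}^{j_5,j_6}_N$ is empty. Hence, for each of these choices of ${\mathcal Z}_N$, the sum $\sum_{\vec j\in \Z^6_{\leq N}} a(\vec j) {\bf 1}_{{\mathcal Z}_N}(\vec j) g_{\vec j}(\omega)$ is literally the zero element of $L^2_\omega$, and \eqref{luccccpart1newnew} holds trivially for every $N$.

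The mechanism I would spell out case by case is the following. Recall that $\vec j\in{\mathcal I}_N$ forces $\vec j\in\Z_{\leq N}^6$, the resonance relation ${\mathcal L}(\vec j)=j_1-j_2-j_3+j_4-j_5+j_6=0$, and the high-frequency constraint $|{\mathcal P}(\vec j)|=|j_1-j_2-j_3|>N$ inherited from $\Pi_{>N}$. Now a $1$-pairing $(j_1,j_2)$ forces $j_1=j_2$, so ${\mathcal P}(\vec j)=-j_3$ and $|{\mathcal P}(\vec j)|>N$ reads $|j_3|>N$, incompatible with $j_3\in\Z_{\leq N}=\Z\cap[-N,N]$; thus ${\mathcal I}^{j_1,j_2}_N=\emptyset$. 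Identically, a $1$-pairing $(j_1,j_3)$ forces $j_1=j_3$, whence $|{\mathcal P}(\vec j)|=|j_2|>N$, again impossible, so ${\mathcal I}^{j_1,j_3}_N=\emptyset$. For the second triplet I would first use ${\mathcal L}(\vec j)=0$ to rewrite $|{\mathcal P}(\vec j)|=|j_4-j_5+j_6|$: a $1$-pairing $(j_4,j_5)$ forces $j_4=j_5$, hence $|{\mathcal P}(\vec j)|=|j_6|>N$, impossible, so ${\mathcal I}^{j_4,j_5}_N=\emptyset$; and a $1$-pairing $(j_5,j_6)$ forces $j_5=j_6$, hence $|{\mathcal P}(\vec j)|=|j_4|>N$, impossible, so ${\mathcal I}^{j_5,j_6}_N=\emptyset$. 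This closes the proof.

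There is no genuine obstacle in this lemma; the one point worth isolating is the ``leftover index'' phenomenon that distinguishes it from Lemma \ref{differenttriplet}. Pairing two frequencies that both sit among the three summands of a single linear form ($j_1-j_2-j_3$, or $j_4-j_5+j_6$ after using ${\mathcal L}(\vec j)=0$) makes those two cancel and leaves a single surviving frequency, which the Dirichlet truncation $\Pi_{>N}$ then forces strictly outside the window $[-N,N]$ where all the $j_k$ already live. By contrast, cross-triplet pairings (treated in Lemma \ref{differenttriplet}) and the pairings $(j_2,j_6)$, $(j_3,j_6)$ (treated in Lemmas \ref{ZNNN}--\ref{ZN}) never cancel two of the three summands, so those index sets are nonempty and genuinely require the quantitative bounds obtained through Proposition \ref{annal}.
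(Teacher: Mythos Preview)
Your argument is correct and is a genuine simplification of the paper's proof. The paper does \emph{not} observe that these four index sets are empty; instead it bounds $|a(\vec j)|$ pointwise and invokes Proposition~\ref{annal} to reduce to explicit lattice sums, which it then estimates as $O(N^{-1})$. For ${\mathcal Z}_N={\mathcal I}^{j_5,j_6}_N$ and ${\mathcal I}^{j_1,j_2}_N$ the paper even writes the constraint $|j_4|>N$ (resp.\ $|j_3|>N$) together with $j_4\in\Z_{\leq N}$ (resp.\ $j_3\in\Z_{\leq N}$) in the resulting sum, without remarking that this range is vacuous; for ${\mathcal I}^{j_4,j_5}_N$ it relaxes to the weaker condition $\max\{|j_1|,|j_2|,|j_3|\}>N/3$ and estimates a genuinely nonempty majorizing sum. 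Your observation that the same-triplet pairing collapses ${\mathcal P}(\vec j)$ (or, via ${\mathcal L}(\vec j)=0$, the expression $j_4-j_5+j_6$) to a single frequency that is simultaneously forced to satisfy $|\cdot|>N$ and $|\cdot|\leq N$ disposes of all four cases at once and bypasses Proposition~\ref{annal} entirely. The paper's route has the virtue of being uniform with the cross-triplet pairings of Lemma~\ref{differenttriplet}, but yours isolates exactly why the same-triplet case is structurally trivial.
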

\begin{proof}
We treat first the case ${\mathcal Z}_N={\mathcal I}^{j_5,j_6}_N$. 
Notice that since we are assuming $j_5=j_6$, then by the constraint $|\mathcal P(\vec j)|>N$
we get $|j_4|>N$. Moreover if we denote $j_5=j_6=j$, then we get the following bound 
for every vector $\vec j=(j_1,j_2,j_3,j_4,j,j)\in {\mathcal I}^{j_5,j_6}_N$:
$$|a(\vec j)|=\frac{|j_1-j_2-j_3||j_4| |j|}{\langle j_1^2\rangle\langle j_2^2\rangle
\langle j_3^2\rangle\langle j_4^2\rangle\langle j^2\rangle^2}\leq \frac{C}{\langle j_1\rangle \langle j_2\rangle
\langle j_3\rangle\langle j_4\rangle\langle j\rangle^3}$$
and hence by using Proposition \ref{pairing} the desired bound
 is reduced to the following estimate:
\begin{equation*}
 \sum_{\substack{j_1, j_2, j_3, j_4 \in \Z_{\leq N}\\ 
|j_4|>N}} \Big (\sum_{j\in \Z_{\leq N}} \frac{1}{\langle j_1\rangle\langle j_2\rangle
\langle j_3\rangle\langle j_4\rangle\langle j \rangle^3}\Big )^2
\leq C  \sum_{\substack{j_1, j_2, j_3, j_4 \in \Z_{\leq N}\\ 
|j_4|>N}} \frac{1}{\langle j_1\rangle^2\langle j_2\rangle^2
\langle j_3^2\rangle\langle j_4\rangle^2} =O(N^{-1}).
\end{equation*}
In the case 
${\mathcal Z}_N={\mathcal I}^{j_4,j_5}_N$ arguing as above, and by denoting $j_4=j_5=j$ we get
the following bound 
$$|a(\vec j)|\leq \frac{|j_1-j_2-j_3||j_6|^2}{\langle j_1\rangle^2\langle j_2\rangle^2
\langle j_3\rangle^2\langle j \rangle^4\langle j_6\rangle^2}\leq \frac C{\langle j_1\rangle\langle j_2\rangle
\langle j_3\rangle\langle j \rangle^4}.$$
We also notice that by the constraint $|j_1-j_2-j_3|>N$ we get $\max\{|j_1|,|j_2|, |j_3|\}>\frac N3$, and also the fact that the sum can be reduced to five indices
$j_1,j_2,j_3, j$, since $j_6=-j_1+j_2+j_3$.
Hence by using Proposition \ref{pairing} the desired bound
 is reduced to the following estimate:
 \begin{equation*}
 \sum_{\substack{j_1, j_2, j_3, \in \Z_{\leq N}\\ 
\max\{|j_1|,|j_2|, |j_3|\}>\frac N3}} \Big( \sum_{j\in \Z_{\leq N}}\frac{1}{\langle j_1\rangle\langle j_2\rangle
\langle j_3\rangle  \langle j \rangle^4}\Big)^2\leq C  \sum_{\substack{j_1, j_2, j_3\in \Z_{\leq N}\\ 
\max\{|j_1|,|j_2|, |j_3|\}>\frac N3}} \frac{1}{\langle j_1\rangle^2\langle j_2\rangle^2
\langle j_3\rangle^2}=O(N^{-1}).
\end{equation*}
We conclude with the case ${\mathcal Z}_N={\mathcal I}^{j_1,j_2}_N$ (the remaining case ${\mathcal Z}_N={\mathcal I}^{j_1,j_3}_N$ is similar). 
In this case we deal with vectors $\vec j=(j,j,j_3,j_4,j_5,j_6)\in {\mathcal I}^{j_1,j_2}_N$ and hence 
$$|a(\vec j)|\leq \frac{|j_3||j_4-j_5+j_6||j_6|}{\langle j\rangle^4
\langle j_3\rangle^2\langle j_4 \rangle^2 \langle j_5 \rangle^2\langle j_6\rangle^2}
\leq \frac{C}{\langle j\rangle^4
\langle j_3\rangle\langle j_4 \rangle \langle j_5 \rangle}.$$
By the constraints we get, arguing as in the previous case, that  $|j_3|>N$ and also the sum can be reduced to the four indexes 
$(j,j_3, j_4, j_5)$, then we conclude, by using Proposition \ref{pairing}, from the following bound 
\begin{equation*}
 \sum_{\substack{j_3, j_4, j_5\in \Z_{\leq N}\\ 
|j_3|>N}} \Big(\sum_{j\in \Z_{\leq N}}\frac{1}{\langle j\rangle^4
\langle j_3\rangle\langle j_4\rangle\langle j_5 \rangle}\Big)^2\leq C  \sum_{\substack{j_3, j_4, j_5\in \Z_{\leq N}\\ 
|j_3|>N}} \frac{1}{
\langle j_3\rangle^2\langle j_4\rangle^2\langle j_5 \rangle^2} =O(N^{-1}).
\end{equation*}

\end{proof}

At this point the proof of \eqref{nahm} is concluded. Next we prove \eqref{nahm1}. It is easy to see that a similar  proof holds 
for \eqref{nahmo2}, and hence we  shall skip it.
In order to prove \eqref{nahm1} we replace in the expression
\eqref{nahm1} the function $u$ by the random vector \eqref{randomizedgen} for $n=2$ and we are reduced to show:
\begin{equation}\label{.............}
\Big \| \Im  \sum_{\substack{(j_1,j_2,j_3,j_4,j_5,j_6)\in {\Z_{\leq N}^6}\\ 
j_1-j_2-j_3+ j_4-j_5+j_6=0\\
|j_4-j_5+j_6|>N}} \frac{ j_1 j_2 j_6}{\langle j_1^2\rangle\langle j_2^2\rangle
\langle j_3^2\rangle\langle j_4^2\rangle\langle j_5^2\rangle\langle j_6^2\rangle}g_{\vec j}
(\omega)\Big \|_{L^2_\omega}\overset{N\rightarrow \infty} \longrightarrow 0.\end{equation}
Next we denote $$b(\vec j)=\frac{ j_1 j_2 j_6}{\langle j_1^2\rangle\langle j_2^2\rangle
\langle j_3^2\rangle\langle j_4^2\rangle\langle j_5^2\rangle\langle j_6^2\rangle}$$
and hence
\begin{equation}\label{vecjPN}|b(\vec j)|\leq \frac{C}{\langle j_1\rangle\langle j_2\rangle
\langle j_3\rangle^2\langle j_4\rangle^2\langle j_5\rangle^2\langle j_6\rangle}.\end{equation}
Next we argue as along the proof of \eqref{lucccc}, in particular we have to consider only the $0$-pairing and $1$-pairing. Hence the proof follows from the following Lemmas in conjunction with Proposition \ref{annal}.
\begin{lemma}\label{IN60}
We have the following limit:
\begin{equation}\label{luccccpart1}
\Big \|  \sum_{\vec j\in \Z^6}
b(\vec j) {\bf 1}_{{\mathcal I}_N^0}(\vec j) g_{\vec j}
(\omega)\Big \|_{L^2_\omega}\overset{N\rightarrow \infty} \longrightarrow 0.\end{equation}
\end{lemma}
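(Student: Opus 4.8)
The plan is to mirror, essentially verbatim, the proof of Lemma \ref{IN6}, the only change being that the coefficient $a(\vec j)$ is replaced by $b(\vec j)$, whose size is controlled by \eqref{vecjPN}. Concretely, I would apply Proposition \ref{annal} to the sum in question exactly as in the proof of Lemma \ref{IN6}: since the indices range over the $0$-pairing set ${\mathcal I}_N^0$, the only surviving contribution on the right-hand side of Proposition \ref{annal} is the plain $\ell^2$-sum of the coefficients, so the vanishing of the $L^2_\omega$-norm is reduced to the deterministic estimate
\[
\sum_{\vec j\in {\mathcal I}_N^0}\frac{1}{\langle j_1\rangle^2\langle j_2\rangle^2\langle j_3\rangle^4\langle j_4\rangle^4\langle j_5\rangle^4\langle j_6\rangle^2}\overset{N\rightarrow \infty}\longrightarrow 0,
\]
using that $|b(\vec j)|^2$ is dominated by the summand by \eqref{vecjPN}.

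I would then use the two constraints defining ${\mathcal I}_N^0\subset {\mathcal I}_N$. First, ${\mathcal L}(\vec j)=0$ allows one to solve $j_6=-j_1+j_2+j_3-j_4+j_5$, so the sum runs over the five free variables $j_1,\dots,j_5\in \Z_{\leq N}$ only, and the harmless factor $\langle j_6\rangle^{-2}\le 1$ may be discarded. Second, $|{\mathcal P}(\vec j)|=|j_1-j_2-j_3|>N$ forces $\max\{|j_1|,|j_2|,|j_3|\}\ge N/3$. Hence the estimate is reduced to
\[
\sum_{\substack{j_1,j_2,j_3,j_4,j_5\in \Z_{\leq N}\\ \max\{|j_1|,|j_2|,|j_3|\}>\frac N3}}\frac{1}{\langle j_1\rangle^2\langle j_2\rangle^2\langle j_3\rangle^4\langle j_4\rangle^4\langle j_5\rangle^4}=O(N^{-1}),
\]
which holds since the unconstrained series converges and localizing one of $j_1,j_2,j_3$ to $\{|j|>N/3\}$ gains at least a factor $\sum_{|j|>N/3}\langle j\rangle^{-2}\lesssim N^{-1}$ (and even more decay if the large frequency is $j_3$). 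This is in fact an easier estimate than the analogous \eqref{sumsquares} in Lemma \ref{IN6}, because the exponents provided by \eqref{vecjPN} are strictly larger.

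I do not anticipate any genuine obstacle here: the content is entirely the bookkeeping of the crude bound \eqref{vecjPN} together with the two linear constraints of ${\mathcal I}_N^0$. The one mildly delicate point — that, on the $0$-pairing set, Proposition \ref{annal} reduces the second moment to an $\ell^2$-sum of the coefficients over the five-dimensional constraint set, the higher-order pairings only improving the bound — is identical to the corresponding step in the proof of Lemma \ref{IN6} and requires no new idea. Finally, the remaining $1$-pairing contributions to \eqref{.............} (not displayed in this excerpt) would be handled by exactly the same case analysis used for $a(\vec j)$ in Lemmas \ref{ZNNN}, \ref{ZN}, \ref{differenttriplet} and \ref{equaltriplet}, the extra decay in \eqref{vecjPN} compared with $a(\vec j)$ making each of those cases no harder.
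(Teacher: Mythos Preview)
Your proposal is correct and follows essentially the same approach as the paper's own proof: both apply Proposition~\ref{annal} to reduce to the $\ell^2$-sum of $|b(\vec j)|^2$ over ${\mathcal I}_N^0$, invoke the crude bound \eqref{vecjPN}, use ${\mathcal L}(\vec j)=0$ to eliminate $j_6$ and $|{\mathcal P}(\vec j)|>N$ to force $\max\{|j_1|,|j_2|,|j_3|\}>N/3$, and conclude the $O(N^{-1})$ bound. Your write-up is in fact slightly more explicit than the paper's (e.g.\ you spell out why $\langle j_6\rangle^{-2}\le 1$ may be dropped), but there is no substantive difference.
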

\begin{proof}
From the constraint $|\mathcal P(\vec j)|>N$ we have
$|j_1-j_2-j_3|>N$ and hence necessarily
$\max\{|j_1|, |j_2|, |j_3|\}\geq \frac N3$.
Hence we conclude from the following bound
$$\sum_{\substack{j_1, j_2,j_3,j_4,j_5\in \Z_{\leq N}\\\max\{|j_1|, |j_2|, |j_3|\}>\frac N3}} 
\frac{1}{\langle j_1\rangle^2\langle j_2\rangle^2
\langle j_3\rangle^4\langle j_4\rangle^4\langle j_5\rangle^4\langle j_6\rangle^2}
=O(N^{-1}).$$

\end{proof}

\begin{lemma}
We have the following limit for every ${\mathcal Z}_N={\mathcal I}^{j_1,j_2}_N, {\mathcal I}^{j_1,j_3}_N, {\mathcal I}^{j_4,j_5}_N, {\mathcal I}^{j_5,j_6}_N$:
\begin{equation}\label{luccccpart1new}
\big\|  \sum_{\vec j\in \Z_{\leq N}^6} a(\vec j) {\bf 1}_{{\mathcal Z}_N}(\vec j) g_{\vec j}
(\omega)\big\|_{L^2_\omega}\overset{N\rightarrow \infty} \longrightarrow 0.\end{equation}
\end{lemma}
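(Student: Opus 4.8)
The plan is to recognize that these four sets are exactly the \emph{same-triplet} $1$-pairings already handled in Lemma \ref{equaltriplet}: in each of ${\mathcal I}^{j_1,j_2}_N$, ${\mathcal I}^{j_1,j_3}_N$, ${\mathcal I}^{j_4,j_5}_N$ and ${\mathcal I}^{j_5,j_6}_N$ the two paired indices lie in a single triplet $\{1,2,3\}$ or $\{4,5,6\}$. Since the present assertion coincides word for word with Lemma \ref{equaltriplet} — same coefficient $a(\vec j)$ and same family ${\mathcal Z}_N$ — I would run the identical argument. Recall, as in the remark preceding Lemma \ref{ZNNN}, that a $\vec j$ carrying a $2$- or $3$-pairing yields a real Gaussian monomial and hence contributes nothing to the imaginary part, while the $0$-pairing and all cross-triplet $1$-pairings are disposed of in Lemmas \ref{IN6} and \ref{differenttriplet}; the four same-triplet pairings above are precisely what is left.

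Concretely, for each ${\mathcal Z}_N$ I would denote by $j$ the common value of the paired indices. As both of them sit in one triplet, the denominator of $a(\vec j)$ contains the factor $\langle j^2\rangle^2=\langle j\rangle^4$, so after cancelling the matching numerator factor one retains at least the decay $\langle j\rangle^{-3}$ in $j$. The two linear constraints defining ${\mathcal I}_N$ then force a large free frequency. If the pairing avoids the triplet carrying ${\mathcal P}$ — the cases ${\mathcal I}^{j_5,j_6}_N$, ${\mathcal I}^{j_1,j_2}_N$, ${\mathcal I}^{j_1,j_3}_N$ — the relation ${\mathcal L}(\vec j)=0$ collapses ${\mathcal P}(\vec j)$ onto a single surviving index, which by $|{\mathcal P}(\vec j)|>N$ must exceed $N$ in modulus. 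If the pairing meets that triplet — the case ${\mathcal I}^{j_4,j_5}_N$ — then $|{\mathcal P}(\vec j)|>N$ gives at once $\max\{|j_1|,|j_2|,|j_3|\}>\tfrac N3$. This produces the crude bounds $|a(\vec j)|\leq C\langle j_1\rangle^{-1}\langle j_2\rangle^{-1}\langle j_3\rangle^{-1}\langle j_4\rangle^{-1}\langle j\rangle^{-3}$ and $|a(\vec j)|\leq C\langle j_1\rangle^{-1}\langle j_2\rangle^{-1}\langle j_3\rangle^{-1}\langle j\rangle^{-4}$ in the two respective situations, exactly as in Lemma \ref{equaltriplet}.

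With these bounds I would invoke Proposition \ref{annal} with $r=1$, dominating the square of the $L^2_\omega$ norm by a sum over the non-paired indices of the square of the inner sum over the paired value $j$. Since $\sum_j\langle j\rangle^{-3}$ (respectively $\sum_j\langle j\rangle^{-4}$) converges, the inner sum factors out as a constant, and each case reduces to an estimate of the form $\sum_{\,j_1,j_2,j_3,j_4\in\Z_{\leq N},\ |j_4|>N}\langle j_1\rangle^{-2}\langle j_2\rangle^{-2}\langle j_3\rangle^{-2}\langle j_4\rangle^{-2}=O(N^{-1})$ (and analogously with three free indices, one of which exceeds $\tfrac N3$), using $\sum_{|m|>N}\langle m\rangle^{-2}=O(N^{-1})$ together with the convergence of the remaining one-dimensional sums. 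Thus the left-hand side is $O(N^{-1/2})\to 0$. The only point demanding care is the per-case bookkeeping — tracking which free index is forced large, and keeping the paired-variable power at least $3$ so that its sum converges — which is routine and is already carried out in Lemma \ref{equaltriplet}.
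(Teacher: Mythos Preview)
Your observation is correct: taken literally with the coefficient $a(\vec j)$, this lemma is word for word Lemma \ref{equaltriplet}, and invoking that proof settles it. Your detailed sketch faithfully reproduces that argument.

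However, the paper's own proof of this lemma works with $b(\vec j)$ throughout, via the bound \eqref{vecjPN}. The ``$a(\vec j)$'' in the displayed statement is a typo for ``$b(\vec j)$'': this lemma sits inside the proof of \eqref{nahm1}, which was reduced to \eqref{.............} and hence to the coefficient $b$, not $a$. The overall method --- Proposition \ref{annal}, a crude pointwise bound, and one index forced large by $|{\mathcal P}(\vec j)|>N$ --- is identical in spirit to Lemma \ref{equaltriplet}, but the specific exponents differ because $b$ carries $j_1 j_2 j_6$ in the numerator rather than $(j_1-j_2-j_3)(j_4-j_5+j_6)j_6$. For example, in the case ${\mathcal I}^{j_1,j_2}_N$ the paper uses $|b(\vec j)|\leq C\langle j\rangle^{-2}\langle j_3\rangle^{-2}\langle j_4\rangle^{-2}\langle j_5\rangle^{-2}\langle j_6\rangle^{-1}$ together with $|j_3|>N$, which is not the bound you wrote down. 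So your proposal is a correct proof of the statement as printed, but not of what the paper actually intends and proves here; adapting your sketch to $b$ is straightforward and amounts to redoing the four cases with \eqref{vecjPN} in place of the $a$-bound.
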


\begin{proof}

Let's treat first the case $\mathcal Z_N={\mathcal I}^{j_1,j_2}_N$. In this case we have to deal with vectors
$$\vec j=(j,j,j_3,j_4,j_5,j_6)$$
and hence by \eqref{vecjPN} we get
$$|b(\vec j)|\leq \frac{C}{\langle j\rangle^2
\langle j_3\rangle^2\langle j_4\rangle^2\langle j_5\rangle^2\langle j_6\rangle}.$$
By using the constraint $|\mathcal P(\vec j)|>N$ and the fact that now $j_3=j_4-j_5+j_6$, we obtain
$|j_3|>N$ and hence,
by Proposition \ref{annal}, we conclude that 
$$\sum_{\substack{j_3,j_4,j_5,j_6\in \Z_{\leq N}\\ |j_3|>N}} \Big(\sum_{j\in \Z_{\leq N}} \frac{ 1}{\langle j\rangle^2
\langle j_3\rangle^2\langle j_4\rangle^2\langle j_5\rangle^2\langle j_6\rangle} \Big)^2=O(N^{-3}).$$
In the case ${\mathcal Z}_N={\mathcal I}^{j_1,j_3}_N$ we get $|j_2|>N$ and by recalling \eqref{vecjPN},
we are reduced to the estimate 
$$\sum_{\substack{j_2,j_4,j_5,j_6\in \Z_{\leq N}\\ |j_2|>N}} \Big(\sum_{j\in \Z_{\leq N}} \frac{ 1}{\langle j\rangle^3
\langle j_2\rangle \langle j_4\rangle^2\langle j_5\rangle^2\langle j_6\rangle} \Big)^2=O(N^{-1}).$$
In the case 
${\mathcal Z}_N={\mathcal I}^{j_4,j_5}_N$, we have vectors
$$\vec j=(j_1, j_2,j_3, j,j, j_6)$$
and hence we get  $|j_6|>N$ due to the condition 
$|{\mathcal P}(\vec j)|>N$. By \eqref{vecjPN} and Proposition \ref{annal} we conclude by 
$$\sum_{\substack{j_1,j_2,j_3,j_6\in \Z_{\leq N}\\ |j_6|>N}} \Big(\sum_{j\in \Z_{\leq N}} \frac{ 1}{\langle j\rangle^4
\langle j_1\rangle\langle j_2\rangle\langle j_3\rangle^2\langle j_6\rangle} \Big)^2=O(N^{-1}).$$
In the case 
${\mathcal Z}_N={\mathcal I}^{j_5,j_6}_N$, we have vectors
$$\vec j=(j_1, j_2,j_3, j_4,j, j)$$
and hence we get  $|j_4|>N$ due to the condition 
$|{\mathcal P}(\vec j)|>N$, hence by \eqref{vecjPN} and Proposition \ref{annal} we conclude by 
$$\sum_{\substack{j_1,j_2,j_3,j_4\in \Z_{\leq N}\\ |j_4|>N}} \Big(\sum_{j\in \Z_{\leq N}} \frac{ 1}{\langle j\rangle^3
\langle j_1\rangle\langle j_2\rangle\langle j_3\rangle^2\langle j_4\rangle^2} \Big)^2=O(N^{-3}).$$

\end{proof}

\begin{lemma}
We have the following limit for every ${\mathcal Z}_N={\mathcal I}^{j_1,j_5}_N, {\mathcal I}^{j_2,j_4}_N, {\mathcal I}^{j_3,j_4}_N, {\mathcal I}^{j_2,j_6}_N,
{\mathcal I}^{j_3,j_6}_N:$
\begin{equation}\label{luccccpart1new}
\Big\| \sum_{\vec j\in \Z_{\leq N}^6} a(\vec j) {\bf 1}_{{\mathcal Z}_N}(\vec j) g_{\vec j}
(\omega)\Big\|_{L^2_\omega}\overset{N\rightarrow \infty} \longrightarrow 0.\end{equation}
\end{lemma}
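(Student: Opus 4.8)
\emph{The plan.} The quantity in the statement is the cross-triplet, single-pairing part of the sum appearing in \eqref{.............}, whose coefficient $b(\vec j)$ of \eqref{.............} obeys the pointwise bound \eqref{vecjPN}. For each of the five listed sets I would run the three-step scheme already used in Lemma \ref{IN60}: first bound the coefficient by \eqref{vecjPN}; then apply Proposition \ref{annal} with $r=1$, which reduces the square of the $L^2_\omega$ norm to $\sum_{\vec h}\left(\sum_{j}|b(\vec j)|\right)^2$, where $j$ denotes the common value of the two paired indices (one drawn from the triplet $\{1,2,3\}$ and one from $\{4,5,6\}$) and $\vec h$ collects the remaining four indices; finally use the relation $\mathcal L(\vec j)=0$ to eliminate one of those indices and invoke $|\mathcal P(\vec j)|>N$ to extract a power of $N^{-1}$.

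\emph{The generic sets.} For ${\mathcal Z}_N={\mathcal I}^{j_1,j_5}_N,\,{\mathcal I}^{j_2,j_4}_N,\,{\mathcal I}^{j_3,j_4}_N$ the two coincident positions contribute a factor $\langle j\rangle^{-2}$ in \eqref{vecjPN}, together with at least one further power $\langle j\rangle^{-1}$, so the inner sum over the paired variable is dominated by $\sum_{j}\langle j\rangle^{-3}<\infty$ (even $\langle j\rangle^{-4}$ for ${\mathcal I}^{j_3,j_4}_N$). After dropping $j_6$ via $j_6=-j_1+j_2+j_3-j_4+j_5$, the restriction $|\mathcal P(\vec j)|=|j_1-j_2-j_3|>N$ forces $\max\{|j_1|,|j_2|,|j_3|\}>\tfrac N3$, and the resulting lattice sum is of exactly the $O(N^{-1})$ type treated in Lemma \ref{IN60}, now with one index frozen by the pairing. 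This yields the claimed vanishing for these three sets.

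\emph{The delicate sets.} For ${\mathcal Z}_N={\mathcal I}^{j_2,j_6}_N$ the pairing imposes $j_2=j_6=j$, so the product $\langle j_2\rangle\langle j_6\rangle=\langle j\rangle^2$ sits in the denominator of \eqref{vecjPN} and $|b(\vec j)|\leq C\langle j_1\rangle^{-1}\langle j\rangle^{-2}\langle j_3\rangle^{-2}\langle j_4\rangle^{-2}\langle j_5\rangle^{-2}$; the inner sum $\sum_{j}\langle j\rangle^{-2}$ is therefore (just barely) convergent. Writing $j_1=j_3-j_4+j_5$ from $\mathcal L(\vec j)=0$, the constraint becomes $|\mathcal P(\vec j)|=|j+j_4-j_5|>N$, and I would split it into the sub-case $\max\{|j_4|,|j_5|\}>\tfrac N3$ (decay from the outer sum) and the sub-case $|j|>\tfrac N3$ (decay $\sum_{|j|>N/3}\langle j\rangle^{-2}=O(N^{-1})$ from the inner sum); Proposition \ref{annal} then closes an $O(N^{-1})$ bound. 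The set ${\mathcal I}^{j_3,j_6}_N$ is entirely analogous, and in fact slightly better since there the doubled variable carries $\langle j\rangle^{-3}$.

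\emph{The main obstacle.} The only genuinely delicate point is the pair ${\mathcal I}^{j_2,j_6}_N,\,{\mathcal I}^{j_3,j_6}_N$: these are precisely the configurations that, for the companion sum \eqref{lucccc}, forced the symmetry cancellation of Lemma \ref{ZNNN}, because there a crude bound diverges. What I must check carefully is that \eqref{vecjPN} supplies the \emph{full} factor $\langle j\rangle^{-2}$ on the doubled variable — namely the two separate powers $\langle j_2\rangle^{-1}$ and $\langle j_6\rangle^{-1}$ — which is exactly what renders the inner sum summable and permits a direct application of Proposition \ref{annal} with no appeal to any symmetry argument. Once the five cross-triplet cases are combined with the $0$-pairing estimate of Lemma \ref{IN60} and the same-triplet estimates, the limit \eqref{.............}, and hence \eqref{nahm1}, follows.
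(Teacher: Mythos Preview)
Your proposal is correct and follows essentially the same route as the paper: bound the coefficient by \eqref{vecjPN}, apply Proposition \ref{annal} with $r=1$, and extract $N^{-1}$ decay from $|\mathcal P(\vec j)|>N$. The only cosmetic difference is that for ${\mathcal I}^{j_2,j_6}_N$ you first eliminate $j_1$ via $\mathcal L(\vec j)=0$ and split according to which of $j,j_4,j_5$ is large, whereas the paper keeps $j_1$ as an outer variable and splits by which of $j,j_1,j_3$ (the indices appearing in $\mathcal P$) is large; both choices close the estimate. Your diagnosis of the key point---that \eqref{vecjPN} supplies the full $\langle j\rangle^{-2}$ on the doubled variable in ${\mathcal I}^{j_2,j_6}_N$, precisely what the crude bound for $a(\vec j)$ lacks and the reason the parallel case for \eqref{lucccc} required the symmetry cancellation of Lemma \ref{ZNNN}---is exactly right.
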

\begin{proof}
In the case 
${\mathcal Z}_N={\mathcal I}^{j_1,j_5}_N$, we have vectors
$$\vec j=(j, j_2,j_3, j_4,j, j_6)$$
and hence we get  $\max \{|j|, |j_2|, |j_3|\}>\frac N3$ due to the condition 
$|{\mathcal P}(\vec j)|>N$, hence by  \eqref{vecjPN} and Proposition \ref{annal}, we conclude by  the following estimate
\begin{multline*}
\sum_{\substack{j_2,j_3,j_4,j_6\in \Z_{\leq N}\\ |j_2|>\frac N 3}} \Big(\sum_{j\in \Z_{\leq N}} \frac{1}{\langle j\rangle^3
\langle j_2\rangle \langle j_3\rangle^2\langle j_4\rangle^2\langle j_6\rangle} \Big)^2
+\sum_{\substack{j_2,j_3,j_4,j_6\in \Z_{\leq N}\\ |j_3|>\frac N 3}} \Big(\sum_{j\in \Z_{\leq N}} \frac{1}{\langle j\rangle^3
\langle j_2\rangle \langle j_3\rangle^2\langle j_4\rangle^2\langle j_6\rangle} \Big)^2
\\+ \sum_{j_2,j_3,j_4,j_6\in \Z_{\leq N}} \Big(\sum_{\substack{j\in \Z_{\leq N} \\|j|>\frac N3}} \frac{1}{\langle j\rangle^3
\langle j_2\rangle \langle j_3\rangle^2\langle j_4\rangle^2\langle j_6\rangle} \Big)^2=O(N^{-1}).\end{multline*}
In the case 
${\mathcal Z}_N={\mathcal I}^{j_2,j_4}_N$, we have vectors
$$\vec j=(j_1, j,j_3, j,j_5, j_6)$$
and hence we get  $\max \{|j|, |j_1|, |j_3|\}>\frac N3$ due to the condition 
$|{\mathcal P}(\vec j)|>N$, hence by  \eqref{vecjPN} and Proposition \ref{annal}, we conclude by 
\begin{multline*}
\sum_{\substack{j_1,j_3, j_5,j_6\in \Z_{\leq N}\\ |j_1|>\frac N 3}} \Big(\sum_{j\in \Z_{\leq N}} \frac{1}{\langle j\rangle^3
\langle j_1\rangle \langle j_3\rangle^2\langle j_5\rangle^2\langle j_6\rangle} \Big)^2
+\sum_{\substack{j_1,j_3,j_5, j_6\in \Z_{\leq N}\\ |j_3|>\frac N 3}} \Big(\sum_{j\in \Z_{\leq N}} \frac{1}{\langle j\rangle^3
\langle j_1\rangle \langle j_3\rangle^2\langle j_5\rangle^2\langle j_6\rangle} \Big)^2
\\+ \sum_{j_1,j_3,j_5,j_6\in \Z_{\leq N}} \Big(\sum_{\substack{j\in \Z_{\leq N} \\|j|>\frac N3}} \frac{1}{\langle j\rangle^3
\langle j_1\rangle \langle j_3\rangle^2\langle j_5\rangle^2\langle j_6\rangle} \Big)^2=O(N^{-1}).\end{multline*}
In the case 
${\mathcal Z}_N={\mathcal I}^{j_3,j_4}_N$, we have vectors
$$\vec j=(j_1, j_2,j, j,j_5, j_6)$$
and hence we get  $\max \{|j|, |j_1|, |j_2|\}>\frac N3$ due to the condition 
$|{\mathcal P}(\vec j)|>N$, hence we conclude by  \eqref{vecjPN} and Proposition \ref{annal}, by the following
\begin{multline*}
\sum_{\substack{j_1,j_2, j_5,j_6\in \Z_{\leq N}\\ |j_1|>\frac N 3}} \Big(\sum_{j\in \Z_{\leq N}} \frac{1}{\langle j\rangle^4
\langle j_1\rangle \langle j_2\rangle\langle j_5\rangle^2\langle j_6\rangle} \Big)^2
+\sum_{\substack{j_1,j_2,j_5, j_6\in \Z_{\leq N}\\ |j_2|>\frac N 3}} \Big(\sum_{j\in \Z_{\leq N}} \frac{1}{\langle j\rangle^4
\langle j_1\rangle \langle j_2\rangle\langle j_5\rangle^2\langle j_6\rangle} \Big)^2
\\+ \sum_{j_1,j_2,j_5,j_6\in \Z_{\leq N}} \Big(\sum_{\substack{j\in \Z_{\leq N} \\|j|>\frac N3}} \frac{1}{\langle j\rangle^4
\langle j_1\rangle \langle j_2\rangle \langle j_5\rangle^2\langle j_6\rangle} \Big)^2=O(N^{-1}).\end{multline*}
In the case 
${\mathcal Z}_N={\mathcal I}^{j_2,j_6}_N$, we have vectors
$$\vec j=(j_1, j,j_3, j_4,j_5, j)$$
and hence we get  $\max \{|j|, |j_1|, |j_3|\}>\frac N3$ due to the condition 
$|{\mathcal P}(\vec j)|>N$. Hence we conclude by  \eqref{vecjPN} and Proposition \ref{annal} from the following computation
\begin{multline*}
\sum_{\substack{j_1,j_3, j_4,j_5\in \Z_{\leq N}\\ |j_1|>\frac N 3}}  \Big(\sum_{j\in \Z_{\leq N}}  \frac{1}{\langle j_1\rangle\langle j\rangle^2
\langle j_3\rangle^2\langle j_4\rangle^2\langle j_5\rangle^2}\Big)^2
+ \sum_{\substack{j_1,j_3, j_4,j_5\in \Z_{\leq N}\\ |j_3|>\frac N 3}}  \Big(\sum_{j\in \Z_{\leq N}}  \frac{1}{\langle j_1\rangle\langle j\rangle^2
\langle j_3\rangle^2\langle j_4\rangle^2\langle j_5\rangle^2}\Big)^2\\
\sum_{j_1,j_3, j_4,j_5\in \Z_{\leq N}}  \Big(\sum_{\substack{j \in \Z_{\leq N}\\|j|>\frac N3}}  \frac{1}{\langle j_1\rangle\langle j\rangle^2
\langle j_3\rangle^2\langle j_4\rangle^2\langle j_5\rangle^2}\Big)^2=O(N^{-1})
\end{multline*}
In the case 
${\mathcal Z}_N={\mathcal I}^{j_3,j_6}_N$, we have vectors
$$\vec j=(j_1, j_2,j, j_4,j_5, j)$$
and hence we get  $\max \{|j|, |j_1|, |j_2|\}>\frac N3$ due to the condition 
$|{\mathcal P}(\vec j)|>N$. 
Hence we conclude by  \eqref{vecjPN} and Proposition \ref{annal} from the following computation
\begin{multline*}
\sum_{\substack{j_1,j_2, j_4,j_5\in \Z_{\leq N}\\ |j_1|>\frac N 3}}  \Big(\sum_{j\in \Z_{\leq N}}  \frac{1}{\langle j_1\rangle \langle j_2\rangle\langle j\rangle^3
\langle j_4\rangle^2\langle j_5\rangle^2}\Big)^2
+ \sum_{\substack{j_1,j_2, j_4,j_5\in \Z_{\leq N}\\ |j_2|>\frac N 3}}  \Big(\sum_{j\in \Z_{\leq N}}  \frac{1}{\langle j_1\rangle \langle j_2\rangle\langle j\rangle^3
\langle j_4\rangle^2\langle j_5\rangle^2}\Big)^2\\
\sum_{j_1,j_2, j_4,j_5\in \Z_{\leq N}}  \Big(\sum_{\substack{j \in \Z_{\leq N}\\|j|>\frac N3}}  \frac{1}{\langle j_1\rangle \langle j_2\rangle\langle j\rangle^3
\langle j_4\rangle^2\langle j_5\rangle^2}\Big)^2=O(N^{-1}).
\end{multline*}

\end{proof}

\section{Proof of Theorem \ref{probBBB} for $n=2$}\label{PROBarg}

We give the proof of Theorem \ref{probBBB} in the case $n=2$, since Proposition \ref{impo} has been proved in detail for $n=2$. However it will be clear from the argument below, that if we use
Proposition \ref{impo} for a generic $n$, then the argument below extends to a generic $n>2$ {\em mutatis mutandis}.
\\
We point out that, with respect to the original Bourgain's argument, we cannot rely on the exact conservation of the measure along the truncated flow, and we can only use as a partial substitute \eqref{almostconv}. This makes the globalization argument and the construction of the invariant set in Theorem \ref{probBBB} more subtle. 

For every $i, j\in \N$ and $D>0$ and $s>0$ we define 
$$B^{i,j}_{s,D}=\Big \{u\in H^s \hbox{ s.t. } \|u\|_{H^s}\leq D \sqrt{i+j}\Big \},$$
where the constant  $D$ will be fixed later depending only on $s$, 
and 
\begin{equation}\label{defsigma}
\Sigma^{i,j}_{N,s, D}=\bigcap_{h\in \Z\cap [-2^j\langle D\sqrt{i+j}\rangle^\beta c^{-1}, 2^j\langle D\sqrt{i+j}\rangle^\beta c^{-1}]} \Phi_N(h c\langle D\sqrt{i+j}\rangle^{-\beta})(B^{i,j}_{s,D}).\end{equation}
where $\Phi_N(t)$ is the flow associated with \eqref{mKdVN}, and $c, \beta$ are the constant that appear in Proposition \ref{cauchytheory}.
\begin{remark}The elements in $\Sigma^{i,j}_{N,s , D}$ are exactly the initial data 
in $B^{i,j}_{s, D}$ which are mapped in $B^{i,j}_{s, D}$ by $\Phi_N(t)$  
along times equidistributed at distance 
$c \langle D\sqrt{i+j}\rangle^{-\beta}$ in the time interval $[-2^j, 2^j]$.
\end{remark} 
As a byproduct of Proposition \ref{cauchytheory} we obtain  the following useful lemma.
\begin{lemma}\label{i+1}
Let $s>4/3$. There exists $D_0>0$ such that for every $D>D_0$
we have the bound:
\begin{equation}\label{bounddescrite}
\sup_{|\tau|\leq 2^j} \|\Phi_N(\tau) u\|_{H^s}\leq D \sqrt{i+1+j}, \quad \forall u\in \Sigma^{i,j}_{N,s, D}, \quad \forall N\in \N.\end{equation}
\end{lemma}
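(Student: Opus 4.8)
The plan is to reduce everything to the a priori bound \eqref{LRG} of Proposition \ref{cauchytheory} by a covering argument on the time grid built into the definition \eqref{defsigma}. Throughout set $S:=D\sqrt{i+j}$ and $\delta:=c\langle S\rangle^{-\beta}$, so that the translates appearing in \eqref{defsigma} are precisely $\Phi_N(h\delta)(B^{i,j}_{s,D})$ with $h\in\Z$, $|h\delta|\le 2^j$. Since each $\Phi_N(h\delta)$ is a bijection and the index set is symmetric under $h\mapsto -h$, membership $u\in\Sigma^{i,j}_{N,s,D}$ is equivalent to $\|\Phi_N(h\delta)u\|_{H^s}\le S$ for every integer $h$ with $|h\delta|\le 2^j$ (this is the content of the Remark following \eqref{defsigma}).

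Fix $u\in\Sigma^{i,j}_{N,s,D}$ and $\tau$ with $|\tau|\le 2^j$. First I would pick the grid index $h:=\lfloor\tau/\delta\rfloor$ if $\tau\ge 0$ and $h:=\lceil\tau/\delta\rceil$ if $\tau<0$; then $h\delta$ lies between $0$ and $\tau$, so $|h\delta|\le|\tau|\le 2^j$ (hence $h$ is an admissible grid index) and $|\tau-h\delta|\le\delta$. Writing $w:=\Phi_N(h\delta)u$, we have $w\in B_S^s$ by the preceding paragraph, and since $\Phi_N$ is a flow, $\Phi_N(\tau)u=\Phi_N(\tau-h\delta)w$. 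Because $|\tau-h\delta|\le\delta=c\langle S\rangle^{-\beta}$, the bound \eqref{LRG} applied to the datum $w\in B_S^s$ and the time $\tau-h\delta$ yields
$$\|\Phi_N(\tau)u\|_{H^s}\le S+\frac1S=D\sqrt{i+j}+\frac{1}{D\sqrt{i+j}}.$$

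It then remains to fix $D_0$ so that the right-hand side is at most $D\sqrt{i+1+j}$. This inequality is equivalent to $\frac{1}{D\sqrt{i+j}}\le D(\sqrt{i+1+j}-\sqrt{i+j})=\frac{D}{\sqrt{i+1+j}+\sqrt{i+j}}$, i.e. to $D^2\ge 1+\sqrt{1+(i+j)^{-1}}$. Since $i,j\in\N$ we have $i+j\ge 1$, so the right-hand side never exceeds $1+\sqrt2$; hence any $D_0\ge\sqrt{1+\sqrt2}$ works, and the conclusion holds for every $D>D_0$, uniformly in $N$, in $i,j$, and in $\tau$ with $|\tau|\le 2^j$.

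The argument is essentially bookkeeping; the one point to be careful about is that the mesh $\delta$ used in \eqref{defsigma} is chosen to coincide exactly with the time-span $c\langle S\rangle^{-\beta}$ on which \eqref{LRG} is valid for data of size $S=D\sqrt{i+j}$, so that no iteration of the local bound is needed. Correspondingly, the crucial feature of Proposition \ref{cauchytheory} here is that the right-hand side of \eqref{LRG} is $S+S^{-1}$ rather than, say, $2S$: this small gain is precisely what is converted into the passage from $\sqrt{i+j}$ to $\sqrt{i+1+j}$, and with a bound like $2S$ in place of $S+S^{-1}$ the lemma would fail. I do not expect any genuine obstacle beyond keeping these constants consistent.
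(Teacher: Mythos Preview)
Your proof is correct and follows essentially the same approach as the paper's: both arguments cover $[-2^j,2^j]$ by the grid of step $\delta=c\langle D\sqrt{i+j}\rangle^{-\beta}$, use the flow property together with Proposition~\ref{cauchytheory} to bound $\|\Phi_N(\tau)u\|_{H^s}$ by $S+S^{-1}$ on each subinterval, and then choose $D$ large enough to guarantee $D\sqrt{i+j}+(D\sqrt{i+j})^{-1}\le D\sqrt{i+1+j}$. Your explicit computation of $D_0$ and your remark on why the $S+S^{-1}$ form of \eqref{LRG} (rather than $2S$) is essential are accurate refinements of the same argument.
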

\begin{remark}
Notice that the bound \eqref{bounddescrite} holds by definition in a stronger form, namely with $D\sqrt{i+j}$ on the right hand side and for every $D>0$, if we restrict the $\sup$ on the left hand side 
to times $h c\langle D\sqrt{i+j}\rangle^{-\beta}$ with $h\in \Z\cap 
[-2^j\langle D\sqrt{i+j}\rangle^\beta c^{-1}, 2^j\langle D\sqrt{i+j}\rangle^\beta c^{-1}]$. The main point in \eqref{bounddescrite} is that the $\sup$ is taken on the full interval
$[-2^j, 2^j]$,
with a small loss on the right hand side namely  $D \sqrt{i+1+j}$, provided that $D$ is large enough.
\end{remark}
\begin{proof}
By splitting the interval $[-2^j, 2^j]$ in subintervals of length $c\langle D\sqrt{i+j}\rangle^{-\beta}$, it is sufficient to prove that 
\begin{multline}\label{giancant}\sup_{\tau\in [h c\langle D\sqrt{i+j}\rangle^{-\beta}, (h+1) c\langle D\sqrt{i+j}\rangle^{-\beta}] } \|\Phi_N(\tau)u\|_{H^s}\leq 
D \sqrt{i+1+j},\\
\forall h\in \Z\cap 
[-2^j\langle D\sqrt{i+j}\rangle^\beta c^{-1}, 2^j\langle D\sqrt{i+j}\rangle^\beta c^{-1}], \quad 
\forall u\in \Sigma^{i,j}_{N,s, D},
\end{multline}
provided that $D$ is large enough.
We shall choose $D$ in such a way that we have the following estimate:
\begin{equation}\label{elemNT}
D \sqrt{i+j} + (D \sqrt{i+j})^{-1}\leq D \sqrt{i+1+j}, \quad \forall i, j \in \N.\end{equation}
It is easy to check that this choice of $D$ is possible, provided that $D>D_0>0$ for a suitable $D_0$.
Now we claim that 
$$\sup_{\tau\in [h c\langle D\sqrt{i+j}\rangle^{-\beta}, (h+1) c\langle D\sqrt{i+j}\rangle^{-\beta}] } \|\Phi_N(\tau)u\|_{H^s}
\leq D \sqrt{i+j} + (D \sqrt{i+j})^{-1}, \quad \forall u\in \Sigma^{i,j}_{N,s, D}.
$$
By translating the time and using the group property of $\Phi_N$ we have that 
$$\sup_{\tau\in [h c\langle D\sqrt{i+j}\rangle^{-\beta}, (h+1) c\langle D\sqrt{i+j}\rangle^{-\beta}] } \|\Phi_N(\tau)u\|_{H^s}=
\sup_{|t|\leq c\langle D\sqrt{i+j}\rangle^{-\beta}}\|\Phi_N(t)(\Phi_N(h c\langle D\sqrt{i+j}\rangle^{-\beta})u)\|_{H^s}.$$
If we set $\Phi_N(h c\langle D\sqrt{i+j}\rangle^{-\beta})u=:u_0$, then by definition of $\Sigma^{i,j}_{N,s, D}$  we have that $u_0\in B^{i,j}_{s,D}$. We can then use 
Proposition \ref{cauchytheory} with $S:=\langle D\sqrt{i+j}\rangle$ to conclude. Then 
from \eqref{elemNT} we get \eqref{giancant}.
\end{proof}

Next we split the proof of Theorem \ref{probBBB} in two parts. First we construct a full measure set in $H^s$ such that items $(i), (ii)$ are fulfilled.
In a second step we refine the construction of the full measure set in such a way that it is invariant along the flow $\Phi(t)$. Moreover the measures $\rho_{2,R}$ 
are invariant  on this set for every $R>0$.

\subsection{Construction of a full measure set $\Sigma^s\subset H^s$ of global solutions to \eqref{data}}\label{subsecglo}

Along this subsection we assume $\frac 43<s<\frac 32$ (which corresponds to the case $n=2$ along Theorem \ref{probBBB}).
\begin{prop}\label{noRiyes}
For every $i\in \N$, $R>0$,  $\frac 43<s<\frac 32$  there exists 
$\Sigma_{i,R}^s\subset H^s$ and 
a constant $D>0$ such that:
\begin{equation}
\Sigma_{i,R}^s \hbox{ is closed in } H^s;
\end{equation}
\begin{equation}\label{rinf}
\rho_{2,R}(H^s\setminus \Sigma_{i,R}^s)\leq 2^{-i};
\end{equation}
\begin{multline}\label{aubg...}
\forall u\in \Sigma_{i,R}^s \quad \exists j_k, L_k\rightarrow \infty, \quad u_k\in H^s \\ \hbox{ s.t. }  \|u_k- u\|_{H^s}\overset{k\rightarrow \infty}\rightarrow 0, \quad
\sup_{|\tau|<2^{j'}} \|\Phi_{L_{k}}(\tau)u_k\|_{H^s}\leq D \sqrt{i+1+j'}, \quad \forall j'\in \{1,\cdots, j_k\}.
\end{multline}
\end{prop}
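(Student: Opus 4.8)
# Proof Proposal for Proposition \ref{noRiyes}

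The plan is to build $\Sigma_{i,R}^s$ as a countable intersection of the sets $\Sigma^{i,j}_{N,s,D}$ (suitably modified to avoid dependence on $N$), exploiting the almost-invariance estimate \eqref{almostconv} together with the Gaussian bound \eqref{gaussianbound} to control measures, and Lemma \ref{i+1} to upgrade the discrete-time bound to a bound on the full time interval. First I would fix $D>D_0$ as in Lemma \ref{i+1}, and estimate $\rho_{2,R,N}(H^s\setminus \Sigma^{i,j}_{N,s,D})$. Writing $\Sigma^{i,j}_{N,s,D}$ as the intersection over the $O(2^j\langle D\sqrt{i+j}\rangle^\beta)$ discrete times $h$ of the preimages $\Phi_N(-hc\langle D\sqrt{i+j}\rangle^{-\beta})(B^{i,j}_{s,D})$, and using the almost-invariance \eqref{almostconv} at each such time together with the group property of $\Phi_N$, each of these preimages has $\rho_{2,R,N}$-measure at least $\rho_{2,R,N}(B^{i,j}_{s,D}) - \varepsilon_N$ where $\varepsilon_N\to 0$; hence by the union bound the complement has measure $\le O(2^j\langle D\sqrt{i+j}\rangle^\beta)\big(\varepsilon_N + \rho_{2,R,N}(H^s\setminus B^{i,j}_{s,D})\big)$. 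Now $\rho_{2,R,N}(H^s\setminus B^{i,j}_{s,D}) \le K e^{-kD^2(i+j)}$ by \eqref{gaussianbound}, and this decays fast enough in $j$ (for $D$ large, depending only on $s$) that, summing over $j\in\N$, the total is $\le 2^{-i-1}$ uniformly in $N$, up to the $\varepsilon_N$ correction. So for $N$ large enough, $\rho_{2,R,N}(H^s\setminus \bigcap_j \Sigma^{i,j}_{N,s,D}) \le 2^{-i}$; passing $N\to\infty$ via \eqref{convrhoNR} converts this into the same bound for $\rho_{2,R}$.

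The construction of an $N$-independent set $\Sigma_{i,R}^s$ then goes as follows. Define, for each $j$, the set $G^{i,j}$ of $u_0\in H^s$ such that there exist sequences $N_k\to\infty$ and $u_k\to u_0$ in $H^s$ with $u_k\in \Sigma^{i,j}_{N_k,s,D}$; equivalently one takes a suitable limsup/closure over $N$ of the $\Sigma^{i,j}_{N,s,D}$. Taking $\Sigma_{i,R}^s = \bigcap_{j\in\N} \overline{G^{i,j}}$ (closure in $H^s$) gives a closed set. The measure bound \eqref{rinf} follows from the $N$-uniform bound of the previous paragraph together with \eqref{convrhoNR} and an Egoroff/Portmanteau-type argument: since $\rho_{2,R,N}\to\rho_{2,R}$ in total variation on Borel sets of $H^s$, and the $\Sigma^{i,j}_{N,s,D}$ have uniformly bounded complement measure, the limit set inherits the estimate. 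For property \eqref{aubg...}: given $u\in \Sigma_{i,R}^s$, for each $j$ pick (by definition of $G^{i,j}$, a diagonalization over $k$) an element $u_k$ with $\|u_k-u\|_{H^s}$ small and $u_k\in \Sigma^{i,j_k}_{L_k,s,D}$ for some $j_k\to\infty$, $L_k\to\infty$; then Lemma \ref{i+1}, applied inductively (raising $i$ to $i+1$, $i+2$, \dots as we extend from $[-2^{j'}, 2^{j'}]$ to $[-2^{j'+1},2^{j'+1}]$ — actually one only needs the single step of Lemma \ref{i+1} at each dyadic scale $j'\le j_k$), gives exactly $\sup_{|\tau|<2^{j'}}\|\Phi_{L_k}(\tau)u_k\|_{H^s}\le D\sqrt{i+1+j'}$ for all $j'\le j_k$.

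The main obstacle I anticipate is the bookkeeping in making the set $\Sigma_{i,R}^s$ genuinely $N$-independent while keeping it closed and retaining the measure bound — one must be careful that the almost-invariance \eqref{almostconv} is only an asymptotic statement in $N$, so the naive intersection $\bigcap_N \Sigma^{i,j}_{N,s,D}$ could be too small (even empty), which is why the definition must involve approximating sequences $u_k$ with $N_k\to\infty$ rather than a fixed $N$. Reconciling "closed set" with "defined via limits of elements of varying-$N$ sets" requires taking closures at the right stage and checking that the closure does not destroy \eqref{rinf}; this is where \eqref{convrhoNR} (total-variation convergence, hence good behavior under the relevant set operations) is essential. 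A secondary technical point is verifying that the number of discrete times, $O(2^j\langle D\sqrt{i+j}\rangle^\beta)$, multiplied by the Gaussian tail $Ke^{-kD^2(i+j)}$, is summable in $j$ and bounded by $2^{-i-1}$; this forces the choice of $D$ to depend on $s$ (through $k$, $K$, $\beta$) but not on $i$ or $j$, which is consistent with the statement.
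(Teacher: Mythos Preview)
Your overall strategy is correct and matches the paper's: use the union bound over the discrete times together with the almost-invariance \eqref{almostconv} and the Gaussian tail \eqref{gaussianbound}, then take a limit-over-$N$ construction to get an $N$-independent closed set, and finally invoke Lemma \ref{i+1}. You also correctly anticipate that the naive intersection over $N$ would be wrong and that the set must be built via approximating sequences.

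There is, however, a genuine gap in your construction of $\Sigma_{i,R}^s$. You define $G^{i,j}$ using only $\Sigma^{i,j}_{N_k,s,D}$ at the single scale $j$, and then set $\Sigma_{i,R}^s=\bigcap_j \overline{G^{i,j}}$. But membership of $u_k$ in $\Sigma^{i,j_k}_{L_k,s,D}$ does \emph{not} imply membership in $\Sigma^{i,j'}_{L_k,s,D}$ for $j'<j_k$: the balls $B^{i,j'}_{s,D}$ are \emph{smaller} and the discrete time grid is different, so there is no inclusion. Consequently Lemma \ref{i+1} at scale $j'$ is not applicable, and from $u_k\in \Sigma^{i,j_k}_{L_k,s,D}$ you only obtain $\sup_{|\tau|<2^{j'}}\|\Phi_{L_k}(\tau)u_k\|_{H^s}\le D\sqrt{i+1+j_k}$, not the required $D\sqrt{i+1+j'}$. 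Since $j_k\to\infty$, this bound is useless for passing to the limit at a fixed scale $j'$, and \eqref{aubg...} fails. The diagonalization you allude to cannot repair this: for $u\in\bigcap_j\overline{G^{i,j}}$ the approximating sequences at different scales $j$ are unrelated, and nothing forces a single $(u_k,L_k)$ to lie in all the $\Sigma^{i,j'}_{L_k,s,D}$ simultaneously.

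The paper fixes this by working with the \emph{nested} sets $\tilde\Sigma^{i,j}_{N,s,D}=\bigcap_{j'=0}^{j}\Sigma^{i,j'}_{N,s,D}$ and by choosing, for each $j$, a frequency cutoff $N_j$ large enough that the measure estimate $\rho_{2,R,N_j}(H^s\setminus \Sigma^{i,j'}_{N_j,s,D})\le 2^{-i-j'}$ holds for \emph{every} $j'\le j$ (this is possible because the almost-invariance error in \eqref{almostconv} on the time window $[-2^j,2^j]$ automatically controls all shorter windows). Then $\Sigma_{i,R}^s$ is defined as the set of $H^s$-limits of sequences $u_k\in\tilde\Sigma^{i,j_k}_{L_k,s,D}$ with $j_k,L_k\to\infty$; this set contains the $\limsup$-type set $\bigcap_J\bigcup_{j\ge J}\tilde\Sigma^{i,j}_{N_j,s,D}$, whose $\rho_{2,R}$-complement is bounded via \eqref{convrhoNR} and the uniform estimate $\rho_{2,R,N_j}(H^s\setminus\tilde\Sigma^{i,j}_{N_j,s,D})\le 2^{-i}$. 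Now each $u_k\in\tilde\Sigma^{i,j_k}_{L_k,s,D}$ lies in $\Sigma^{i,j'}_{L_k,s,D}$ for every $j'\le j_k$, so Lemma \ref{i+1} applied at each such scale gives precisely the $j'$-dependent bound in \eqref{aubg...}. Your argument becomes correct once you replace $\Sigma^{i,j}_{N,s,D}$ by $\tilde\Sigma^{i,j}_{N,s,D}$ in the definition of the approximating sets and carry the bookkeeping of $N_j$ accordingly.
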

\begin{proof}
We claim that we can choose $D$ and a sequence
$N_j\rightarrow \infty$ such that
\begin{equation}\label{shirley}
 \rho_{2,R, N_j}(H^s \setminus \Sigma^{i,j'}_{N_j,s,D})
\leq 2^{-i-j'}, \quad \forall j'\in \{1, \dots, j\}. 
\end{equation}
In fact notice that by \eqref{defsigma} we have:
\begin{multline}\label{subadd}
\rho_{2,R,N}(H^s\setminus \Sigma^{i,j}_{N,s,D})=
\rho_{2,R,N}\Big (\bigcup_{h=-[\frac{2^j\langle D\sqrt{i+j}\rangle^\beta}c]}^{[\frac{2^j\langle D\sqrt{i+j}\rangle^\beta}c]}  H^s\setminus  \Phi_{N}(h c\langle D\sqrt{i+j}\rangle^{-\beta})(B^{i,j}_{s,D})\Big)
\\\leq \sum_{h=-[\frac{2^j\langle D\sqrt{i+j}\rangle^\beta}c]}^{[\frac{2^j\langle D\sqrt{i+j}\rangle^\beta}c]} \rho_{2,R,N}\Big(\Phi_N(h c\langle D\sqrt{i+j}\rangle^{-\beta})(H^s\setminus  B^{i,j}_{s,D})\Big),
\end{multline}
where we denoted by $[x]$ the integer part of $x$. Next we recall that by \eqref{almostconv} we get 
for every $\varepsilon>0$, $j\in \N$ the existence of $N_{\varepsilon, j} $  such that
\begin{equation}\label{..man..}
\sup_{t\in [-2^j, 2^j]}\Big |\rho_{2,R,N} \Big(\Phi_{N}(t)(H^s\setminus  B^{i,j}_{s,D})\Big)-
\rho_{2,R,N} \Big (H^s\setminus  B^{i,j}_{s,D})\Big )\Big |\leq \varepsilon, \quad \forall N\geq N_{\varepsilon,j}\end{equation}
and hence we can continue the estimate \eqref{subadd}
as follows
\begin{multline*}
\rho_{2,R,N}(H^s\setminus \Sigma^{i,j}_{N,s, D})\leq 
2 [\frac{2^j\langle D\sqrt{i+j}\rangle^\beta}c]
\big (\rho_{2,R,N} (H^s\setminus  B^{i,j}_{s,D})
+ \varepsilon\big )\\\leq
\frac{K2^{j+1}\langle D\sqrt{i+j}\rangle^\beta}c
e^{-k D^2 (i+j)} + \frac{2^{j+1}\langle D\sqrt{i+j}\rangle^\beta}c\varepsilon,\quad \forall N\geq N_{\varepsilon,j}
 \end{multline*}
where we have used \eqref{gaussianbound}.
Notice that by the same argument as above we have the following bound as well:
\begin{multline}\label{rhonr}
\rho_{2,R,N}(H^s\setminus \Sigma^{i,j'}_{N,s,D})
\leq
\frac{K2^{j'+1}\langle D\sqrt{i+j'}\rangle^\beta}c
e^{-k D^2 (i+j')} + \frac{2^{j'+1}\langle D\sqrt{i+j'}\rangle^\beta}c\varepsilon,\\
\quad \forall j'\in\{0, \dots j\}, \quad N\geq N_{\varepsilon,j}.
\end{multline}
Next we choose
$\varepsilon=2^{-D(i+j)}$ with $D$ large enough in such a way that 
$$
\frac{\langle  D\sqrt{i+j'}\rangle^\beta}c \big (K2^{j'+1}
e^{-k  D^2 (i+j')} + 2^{j'+1}2^{- D(i+j)}\big )
\leq 2^{-i-j'}, \quad \forall j'\in \{1,\dots, j\}.$$
Hence from \eqref{rhonr} we get
\begin{equation*}
\rho_{2,R,N}(H^s\setminus \Sigma^{i,j'}_{N,s, D})
\leq 2^{-i-j'}, \quad \forall j'\in \{1, \dots, j\}, \quad N\geq N_{2^{- D(i+j)},j},
\end{equation*}
which in turn is equivalent to \eqref{shirley} if we choose $N_j=N_{2^{- D(i+j)},j}$.\\
Next we define
$$\tilde \Sigma^{i,j}_{N,s, D}=\bigcap_{j'=0}^j \Sigma^{i,j'}_{N,s, D}$$
and
by \eqref{shirley} we get
\begin{equation}\label{2allamenoi}\rho_{2,R,N_j} \Big (H^s\setminus \tilde \Sigma^{i,j}_{N_j,s,D}\Big )\leq 2^{-i}.\end{equation}
Next we introduce 
 the set $\Sigma_{i,R}^s$ as follows:
\begin{equation}\label{sigmais}
\Sigma_{i,R}^s =\Big \{u\in H^s \hbox{ s.t. } \exists  j_k, L_k \rightarrow \infty, u_{k}\in \tilde \Sigma^{i,j_k}_{L_k,s, D} \hbox{ with }
\|u_{k}-u\|_{H^s} \overset{k\rightarrow \infty} \longrightarrow 0\Big \}.
\end{equation}
Notice that the dependence of the set above from $R$ is related with the fact that indeed $ D$ (that appears in the definition above) depends on $R$, which in turn is related to  the Gaussian bounds used above. Notice that also the bound
\eqref{..man..} depends on $R$, in view of Proposition \ref{impo} which depends on $R$.
We have the following inclusion
\begin{equation}\label{limsupsetth}
\bigcap_{J\in \N} \Big (\bigcup_{j\geq J} \tilde \Sigma^{i,j}_{N_j,s, D}\Big)\subset \Sigma_{i,R}^s,
\end{equation}
where the sequence $N_j$ is provided by \eqref{shirley}.
Next we show that $\Sigma_i^s$ satisfies all the required properties.
It is easy to check by an elementary diagonal argument that  $\Sigma_{i,R}^s$ is a closed set. 
In order to prove \eqref{rinf} first notice that by \eqref{limsupsetth}  we get:
\begin{equation*}
\rho_{2,R} (H^s\setminus \Sigma_{i,R}^s )
\leq \rho_{2,R} \Big (H^s\setminus \bigcap_{J\in \N} \big (\bigcup_{j\geq J} \tilde \Sigma^{i,j}_{N_j,s, D}\big)\Big)
=\lim_{J\rightarrow \infty} \rho_{2,R} \Big(H^s\setminus \bigcup_{j\geq J} \tilde \Sigma^{i,j}_{N_j,s, D}\Big)
\end{equation*} where we used that the family of sets $\bigcup_{j\geq J} \tilde \Sigma^{i,j}_{N_j,s, D}$ is decreasing in $J$. Then by 
\eqref{convrhoNR} we get
\begin{equation*}
\lim_{J\rightarrow \infty} \rho_{2,R} \Big(H^s\setminus \bigcup_{j\geq J} \tilde \Sigma^{i,j}_{N_j,s, D}\Big)
=\lim_{J\rightarrow \infty} \rho_{2,R,N_J} \Big(H^s\setminus \bigcup_{j\geq J} \tilde \Sigma^{i,j}_{N_j,s, D}\Big)
\leq \limsup_{J\rightarrow \infty} \rho_{2,R, N_J} \Big (H^s\setminus \tilde \Sigma^{i,J}_{N_J,s, D}\Big)
\leq 2^{-i}
\end{equation*}
where we have used \eqref{2allamenoi} at the last step.
Finally notice that
\eqref{aubg...} comes from the definition of $\Sigma^s_{i,R}$ (see \eqref{sigmais}) in conjunction with Lemma \ref{i+1}.

\end{proof}

\begin{proof}[Proof of items $(i), (ii)$ in Theorem \ref{probBBB}]
Next we introduce a set $\Sigma^s\subset H^s$ which satisfies items $(i), (ii)$ in Theorem \ref{probBBB}. 
For every $R>0$ we set
\begin{equation}\label{Sigmai}
\Sigma^s_R=\bigcup_{i\in \N} \Sigma^s_{i,R}.\end{equation}
It is clear that it is a $F_\sigma$ subset in $H^s$, moreover due to \eqref{rinf} we get 
\begin{equation}\label{..sl}
\rho_{2,R}(H^s\setminus \Sigma^s_R)=0\end{equation}
Next we select $R=j$ and we can define 
$$\Sigma^s=\bigcup_{j\in \N} \Sigma^s_j.$$
Since we have  $\rho_{2,j}(H^s\setminus \Sigma^s_j)=0$ we conclude by Proposition \ref{luccidi} that 
$\mu_2(H^s\setminus \Sigma^s)=0$ and item $(i)$ of Theorem \ref{probBBB} is proved.
Based on the local Cauchy theory, in order to prove item $(ii)$, it is sufficient to show that for every $u\in \Sigma^s$ we have that the $H^s$ norm cannot blow up in finite time, 
indeed we show a logarithmic upper bound for the $H^s$ norm. By the definition of $\Sigma^s$ it is sufficient to prove this property for initial data belonging to $\Sigma^s_{i,R}$,
with $i, R$ fixed.
Indeed by \eqref{aubg...} for any $u\in \Sigma^s_{i,R}$ 
there exist $j_k, L_k\rightarrow \infty$ and a sequence $u_k$ in $H^s$ such that $u_k\overset{H^s} \rightarrow u$  such that
$$\sup_{|\tau|<2^{j'}} \|\Phi_{L_k}(\tau) u_k\|_{H^s}\leq D \sqrt{i+1+j'}, \quad \forall j'\in\{0, \cdots, j_k\}$$
and hence  from \eqref{convspri} we have the following bound
\begin{equation}\label{j==o}\sup_{|\tau|<2^{j}} \|\Phi(\tau) u\|_{H^{s}}\leq D \sqrt{i+1+j}, \quad \forall j, \quad u\in \Sigma^s_{i,R}.
\end{equation}
By the bound above
we get
$$\sup_{2^l\leq |\tau| < 2^{l+1}} \|\Phi(\tau) u\|_{H^{s}}\leq D \sqrt{i+2+\log_2 2^l}, \quad \forall l, \quad u\in \Sigma^s_{i,R}$$
which in turn implies
$$\sup_{2^l\leq |\tau| < 2^{l+1}}  \|\Phi(\tau) u\|_{H^{s}}\leq D \sqrt{i+2+\log_2 |\tau|}, \quad \forall l, \quad u\in \Sigma^s_{i,R}$$
and hence, since $l$ is arbitrary, we have
$$\|\Phi(\tau) u\|_{H^{s}}\leq D \sqrt{i+2+\log_2 |\tau|}, \quad \forall |\tau|>1, \quad u\in \Sigma^s_{i,R}.$$
On the other hand we also have by \eqref{j==o}, where we choose $j=0$,
$$\sup_{|\tau| <1} \|\Phi(\tau) u\|_{H^{s}}\leq D \sqrt{i+1}$$
and hence gathering together the estimates above we conclude
$$\|\Phi(\tau) u\|_{H^{s}}\leq D \sqrt{i+2+\log_2(1+|\tau|)}, \quad \forall |\tau|>0, \quad u\in \Sigma^s_{i,R}.$$

\end{proof}

\subsection{Construction of the invariant set and invariance of the measure}

Next for $\frac43<s<\frac32$ we construct a full measure set $\Sigma^s\subset H^s$, with respect to $\mu_2$, such that $\Phi(t)(\Sigma^s)=\Sigma^s$ and moreover the measure $\rho_{2,R}$
is invariant once restricted on $\Sigma^s$, for every $R>0$.
We fix $R>0$ and  for $\bar s\in (s, \frac 32)$ we pick a sequence
$$\quad s_l\nearrow \bar s, \quad s_l\in (s, \frac 32).$$
Then we consider the following subset of $H^s$
\begin{equation}\label{slashsi}\Sigma_R=\bigcap_{l\in \N} \Sigma^{s_l}_R\end{equation}
where $\Sigma^{s_l}_R$ are defined in \eqref{Sigmai} for $s=s_l$.
Notice that since $s_l>s$ we have that  $\Sigma_R^{s_l}\subset H^s$ and hence $\Sigma_R\subset H^s$. Moreover we can easily check
$\rho_{2,R}(H^s\setminus \Sigma_R)=0$. In fact we have
$$\rho_{2,R}(H^s\setminus \Sigma_R)\leq \rho_{2,R}\Big (\bigcup_l (H^{s}\setminus \Sigma_R^{s_l})\Big )
\leq \sum_l  \rho_{2,R} (H^{s}\setminus \Sigma_R^{s_l})=\sum_l  \rho_{2,R} (H^{s_l}\setminus \Sigma_R^{s_l})
$$
where in the last step we have used the fact that, due to the property on the support of the Gaussian measure $\mu_2$ (see Section \ref{poBBGA}) and since $s_l>s$, we have $\mu_2(H^s\setminus H^{s_l})=0$. We conclude by recalling \eqref{..sl} with $s=s_l$.

We claim that the set $\Sigma_R$ defined in \eqref{slashsi} is invariant by the flow $\Phi(t)$ for every $t$. Once this fact is proved then it is sufficient to choose
$$\Sigma^s=\bigcup_{j\in \N} \Sigma_j.$$
Notice that by the construction in Subsection \ref{subsecglo} (where we fix $s=s_l$) we have that the flow $\Phi(t)$ is globally well defined on $\Sigma_R$. Therefore we only have to prove
the invariance of $\Sigma_R$ along $\Phi(t)$. In order to do that  it is sufficient to show that  for every given $l$ and $t$ we have the implication:
\begin{equation}\label{establ}u\in \Sigma^{s_{l+1}}_R \Longrightarrow \Phi(t)u\in \Sigma^{s_l}_R.\end{equation} 
In fact once \eqref{establ} is proved then given $u\in \Sigma_R$ and given any $l$ we have by definition that
$u\in \Sigma^{s_{l+1}}_R$ and hence by \eqref{establ} we get 
$\Phi(t)u\in \Sigma^{s_l}_R$. Since $l$ is arbitrary we then conclude that  $\Phi(t)u\in \Sigma_R$.
Notice that in  the definition of $\Sigma^{s}_{i,R}$ there is  the dependence on the constant $D$ from Proposition \ref{cauchytheory}, that in principle depends on $s$ and $R$. Hence,
it could happen that the constant $D$ has to change with $s_l$.
Actually this is not the case since the dependence of $D$ from $s$, at fixed $R$,  in turn is via  the corresponding constants 
$k, K$ in \eqref{gaussianbound}, that  can be chosen uniformly provided that $s\in [4/3, \bar s]$. This last fact follows
from the Gaussian bounds of  the measure $\mu_2$ which are uniform for $s\in [\frac 43, \bar s]$, see Remark \ref{rem1}.
From  now on the constant $D$ involved in the definition of $\Sigma^{s_l}_{i,R}$ is assumed to be independent of $s_l$, but it can depend on $R$.\\
Next we prove \eqref{establ} for  $t>0$ (the same argument works for $t<0$).  We select
\begin{equation}\label{jbar}\bar j= \min \{j\in \N \hbox{ s.t. }  t\leq 2^j\}.\end{equation}
Notice that if $u_0\in \Sigma^{s_{l+1}}_R$ then by definitions \eqref{sigmais} and  \eqref{Sigmai}, and by using Lemma \ref{i+1} we get
\begin{multline*}\exists i\in \N, \quad u_k\in H^{s_{l+1}}, \quad  j_k\rightarrow \infty, \quad L_k\rightarrow \infty \quad \hbox{ s.t. } \\
u_k\overset{H^{s_{l+1}}}\longrightarrow u,
\quad \sup_{|\tau|< 2^{j'}}\|\Phi_{L_k} (\tau)u_k\|_{H^{s_{l+1}}}\leq D\sqrt{i+ j'+1}, \quad \forall j'\in \{0,\dots, j_k\}.\end{multline*}
From this bound and the group property of  $\Phi_N$ we have 
\begin{equation}\label{prevmag}\sup_{|t+\tau|<2^{j'}}\|\Phi_{L_k} (\tau) (\Phi_{L_k}(t)u_k)\|_{H^{s_{l+1}}}\leq D\sqrt{i+ j'+1}, \quad \forall j'\in \{0,\dots, j_k\}.
\end{equation}
Next notice that for every $j'>\bar j$  we have the inclusion
$$[-2^{j'-1}, 2^{j'-1}]\subset [-t-2^{j'}, -t+2^{j'}], \quad  \forall j'>\bar j$$
and hence from the bound \eqref{prevmag} we get
\begin{equation}\label{prevmag2}\sup_{|\tau|<2^{j'-1}}\|\Phi_{L_k} (\tau) (\Phi_{L_k}(t)u_k)\|_{H^{s_{l+1}}}\leq D\sqrt{i+ j'+1}, \quad \forall j'\in \{\bar j+1,\dots, j_k\}.
\end{equation}
On the other hand by \eqref{prevmag2} for $j'=\bar j+1$ we get
\begin{equation*}\sup_{|\tau|<2^{\bar j}}\|\Phi_{L_k} (\tau) (\Phi_{L_k}(t)u_k)\|_{H^{s_{l+1}}}\leq D\sqrt{i+ \bar j+2}
\end{equation*}
which trivially implies 
\begin{equation}\label{prevmag3}\sup_{|\tau|<2^{j'}}\|\Phi_{L_k} (\tau) (\Phi_{L_k}(t)u_k)\|_{H^{s_{l+1}}}\leq D\sqrt{i+ \bar j+2+j'},
\quad \forall  j'\in \{1, \cdots, \bar j\}.
\end{equation} 
By combining \eqref{prevmag2} and \eqref{prevmag3}, and   the fact that $\|\cdot \|_{H^{s_{l}}}\leq \|\cdot\|_{H^{s_{l+1}}}$, we obtain
\begin{equation*}\sup_{|\tau|<2^{j'}}\|\Phi_{L_k} (\tau) (\Phi_{L_k}(t)u_k)\|_{H^{s_{l}}}\leq D\sqrt{i+ \bar j+2+j'},
\quad \forall j'\in \{1, \cdots, j_k-1\}.
\end{equation*}
Moreover, due to \eqref{interpolSS}, we get
$\Phi_{L_k}(t)u_k\overset{H^{s_l}}\longrightarrow \Phi(t)u$. Then we can conclude
$\Phi(t) u\in \Sigma^{s_l}_{i+\bar j+2,R}$ and \eqref{establ} follows by definition of $\Sigma^{s_l}_R$.\\
Once the invariant Borel  set $\Sigma$ has been constructed then the invariance of the measure on $\Sigma$ follows by  exactly the same computations
done for the Benjamin-Ono equation in \cite{TV2} .

\section{Extension to higher order conservation laws}\label{RNU}

As already mentioned in  the introduction, for every integer $n$ 
there exists a conservation law for \eqref{data} with the structure
$$E_{2n+1} (u)=\|\partial^{n} u\|_{L^2}^2 +  R_n(u)$$
where $R_n(u)$ is the integral of a linear combination of densities belonging to the family
\begin{equation*}
{\mathcal D}_n=\Big \{\prod_{i=1}^N \partial^{\alpha_i} v, \quad v\in \{u, \bar u\} \hbox{ s.t. } N\geq 2, \quad \max \alpha_i\leq n-1, \quad \sum_{i=1}^N \alpha_i \leq 2 n-2\Big \}.
\end{equation*}
We also introduce the following subsets of ${\mathcal D}_n$:
\begin{multline*} {\mathcal D}_n^1=\Big \{\prod_{i=1}^N \partial^{\alpha_i} v \in {\mathcal D}_n
\hbox{ s.t. } \#\{i\in \{1, \cdots, N\} \hbox{ s.t. } \alpha_i\neq 0\}\geq 3 \Big\},\\
{\mathcal D}_n^2=\Big \{\prod_{i=1}^N \partial^{\alpha_i} v, \quad v\in \{u, \bar u\} \hbox{ s.t. } N\geq 2, \quad \max \alpha_i\leq n-1, \quad \sum_{i=1}^N \alpha_i< 2 n-2\Big \}.
\end{multline*}
Notice that the densities in  $ {\mathcal D}_n^1$ have at least three factors that appear with a non-trivial derivative, while the densities in 
${\mathcal D}_n^2$ involve in total less than $2n-2$ derivatives.\\
Next we give a more precise structure on the energies $E_{2n+1}$, indeed we isolate all the parts of the energy which, beside the leading quadratic part,
involve terms belonging to ${\mathcal D}_n\setminus ({\mathcal D}_n^1\cup {\mathcal D}_n^2)$.
Next we shall denote for shortness 
$$\tilde {\mathcal D}_n={\mathcal D}_n\setminus ({\mathcal D}_n^1\cup {\mathcal D}_n^2).$$

\subsection{On the structure of $E_{2n+1}$} As recalled in  the introduction, the energies $E_{2n+1}(u)$
can be constructed by an induction argument as follows:
\begin{equation}\label{En2}E_{j}(u)=\Re \int \bar u w_{j}(u) dx, \quad \forall j\in \N,\end{equation}
where
\begin{equation}\label{induction}w_1(u)=u, \quad w_{j+1} (u) = D w_j + \bar u \sum_{k=1}^{j-1} w_k(u) w_{j-k}(u), \quad D=-i\partial_x.\end{equation}
We shall use the following decomposition
$$w_j(u)=w_j^L(u) + w_j^C(u)+ T_j(u)$$
where $w_j^L(u)$ and $w_j^C(u)$ are the homogenous parts of $w_j(u)$ respectively of order one and three, while $T_j(u)$ involves all the terms of $w_j(u)$ 
whose homogeneity is of order larger or equal than four.
Next we extract a more precise structure for $E_{2n+1}(u)$, namely we shall write its expression up to terms belonging to $\tilde {\mathcal D}_n$.
\begin{prop}
For every number $n\geq 2$ we have
\begin{equation}\label{structen2}
E_{2n+1} (u)= \|\partial^{n} u\|_{L^2}^2 + 
 \int (\partial^{n-1} |u|^2)^2 + (4n-2)   \int |\partial^{n-1} u|^2|u|^2 + \tilde R_n(u)
 \end{equation}
where 
$\tilde R_n(u)$ is the integral of a linear combination of densities belonging to $\tilde {\mathcal D}_n$.
\end{prop}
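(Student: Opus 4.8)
The plan is to read the decomposition \eqref{structen2} directly off the inductive construction \eqref{En2}--\eqref{induction} together with the homogeneity splitting $w_j = w_j^L + w_j^C + T_j$ already introduced, in which $w_j^L$ and $w_j^C$ are the parts of $w_j$ homogeneous of degree $1$, resp.\ $3$, in $(u,\bar u)$ and $T_j$ collects the parts of degree $\ge 5$. Since $E_{2n+1}(u)=\Re\int\bar u\,w_{2n+1}$, this induces a splitting $E_{2n+1}(u)=\Re\int\bar u\,w_{2n+1}^L+\Re\int\bar u\,w_{2n+1}^C+\Re\int\bar u\,T_{2n+1}$ into a quadratic, a quartic and a higher-degree contribution, which I would treat in turn, placing everything other than the two displayed terms into $\tilde R_n$. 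For the quadratic part one notes that the degree-one part of \eqref{induction} is simply $w_{j+1}^L=Dw_j^L$, so $w_j^L=D^{\,j-1}u$ and $w_{2n+1}^L=D^{2n}u=(-1)^n\partial^{2n}u$; integrating by parts $n$ times gives $\Re\int\bar u\,w_{2n+1}^L=\|\partial^{n} u\|_{L^2}^2$, the leading term of \eqref{structen2}.

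The quartic part is the heart of the matter. The degree-three part of \eqref{induction} satisfies the linear recursion $w_{j+1}^C=Dw_j^C+\bar u\,\sigma_j$ with $\sigma_j:=\sum_{k=1}^{j-1}w_k^L w_{j-k}^L=\sum_{a=0}^{j-2}(D^a u)(D^{\,j-2-a}u)$ and $w_1^C=w_2^C=0$, which unwinds to $w_{2n+1}^C=\sum_{m=2}^{2n}D^{\,2n-m}(\bar u\,\sigma_m)$. Since $D=-i\partial_x$ all the phases combine into a single sign, and moving the derivatives off the outer factor $\bar u$ via $\int\bar f\,Dg=\int\overline{Df}\,g$ shows that $\Re\int\bar u\,w_{2n+1}^C$ is a fixed real linear combination of integrals $\Re\int\partial^{a_1}u\,\partial^{a_2}u\,\partial^{a_3}\bar u\,\partial^{a_4}\bar u$ with $a_1+a_2+a_3+a_4=2n-2$. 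By further integration by parts we may assume each $a_i\le n-1$; the only such profiles with at most two nonzero $a_i$ are then $(n-1,n-1,0,0)$ up to permutation, every other profile belonging to ${\mathcal D}_n^1$. Hence, modulo densities in ${\mathcal D}_n^1$, $\Re\int\bar u\,w_{2n+1}^C$ collapses to $c_1\Re\int(\partial^{n-1}u)^2\bar u^2+c_2\int|\partial^{n-1}u|^2|u|^2$ for two universal constants $c_1,c_2$. To pin down $c_1,c_2$ I would pass to Fourier variables and prove by induction on $j$ an explicit formula for the leading symbol of $w_j^C$ (for which the recursion becomes an affine relation), evaluate it at $j=2n+1$, and then, using the Leibniz expansion $\partial^{n-1}(u\bar u)=\sum_k\binom{n-1}{k}\partial^k u\,\partial^{n-1-k}\bar u$ — whose square has leading part $(\partial^{n-1}u)^2\bar u^2+2|\partial^{n-1}u|^2|u|^2+u^2(\partial^{n-1}\bar u)^2$ — rewrite the outcome as $\int(\partial^{n-1}|u|^2)^2+(4n-2)\int|\partial^{n-1}u|^2|u|^2$ modulo ${\mathcal D}_n^1\cup{\mathcal D}_n^2$. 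A consistency check is provided by the low cases $E_3$ (which fixes the normalization of the convolution sum) and $E_5$ in \eqref{consexpl}, where $4n-2=6$; alternatively one can argue that this coefficient is necessarily affine in $n$ and read it off from $n=2,3$.

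For the higher-degree part, unwinding \eqref{induction} once more shows that the degree-$(2d+1)$ part of $w_j$ has derivative weight exactly $j-(2d+1)$ (each extra pair of factors produced by the convolution sum costs two orders of derivative), so every density occurring in $\bar u\,T_{2n+1}$ (where $d\ge 2$) carries at least six factors and total derivative order at most $2n-4<2n-2$; after redistributing derivatives by integration by parts so that no factor exceeds $n-1$, each such density lies in ${\mathcal D}_n^2$. Collecting the remainders produced in the quartic and higher-degree parts as $\tilde R_n(u)$ then completes the proof of \eqref{structen2}; note that, consistently, the ${\mathcal D}_n^1$ contributions to $\tilde R_n$ come only from the quartic part and the ${\mathcal D}_n^2$ ones only from $\bar u\,T_{2n+1}$ (for instance the term $2\int|u|^6$ when $n=2$).

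The step I expect to be the main obstacle is the quartic computation: showing that after all the integrations by parts the ``top'' profiles $(\partial^{n-1}\cdot)(\partial^{n-1}\cdot)$ assemble into precisely $\int(\partial^{n-1}|u|^2)^2+(4n-2)\int|\partial^{n-1}u|^2|u|^2$, with exactly the constant $4n-2$ and nothing else surviving at that level. Because the recursion for $w_j^C$ is not homogeneous in derivative weight, the direct bookkeeping of signs and of which terms fall into ${\mathcal D}_n^1$ is heavy, and the Fourier-symbol induction — or the shortcut of arguing a priori that the relevant coefficient is affine in $n$ and checking $n=2,3$ — is the safest route to the constant.
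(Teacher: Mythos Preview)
Your overall architecture --- splitting $w_j=w_j^L+w_j^C+T_j$, identifying $w_j^L=D^{j-1}u$, unwinding the recursion for $w_j^C$ to $w_{2n+1}^C=\sum_{m=2}^{2n}D^{2n-m}(\bar u\,\sigma_m)$, and observing that the degree-$\ge 6$ contributions land in ${\mathcal D}_n^2$ --- is exactly the paper's framework, and your treatment of the quadratic and higher-degree parts matches theirs.

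The divergence is at the quartic step, where you stop at ``the result is $c_1\Re\int(\partial^{n-1}u)^2\bar u^2+c_2\int|\partial^{n-1}u|^2|u|^2$'' and defer the determination of $c_1,c_2$ to a Fourier-symbol induction you do not carry out, or to an ``affine in $n$'' shortcut. The paper instead does the bookkeeping directly: it writes out $\int\bar u\,w_{2n+1}^C$ as a sum of explicit terms, then performs a short sequence of integrations by parts that reduces each of the sums $\sum_{k=2}^{2n-2}(\cdots)$ and $\sum_{i=1}^{2n-3}(\cdots)$ to multiples of $\int(D^{n-1}u)^2\bar u^2$ and $\int|D^{n-1}u|^2|u|^2$ modulo ${\mathcal D}_n^1$, uses the elementary identity $2(-1)^{n-1}+2\sum_{k=2}^{n-1}(-1)^{n-k}+1=(-1)^{n-1}$ to collapse the alternating signs, and then completes the square via the Leibniz expansion (exactly as you indicate) to get the coefficients $1$ and $4n-2$. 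This is a finite, mechanical computation of perhaps a page.

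The gap in your plan is the ``affine in $n$'' shortcut: you give no reason why the coefficient must be affine a priori. In the paper's computation this emerges \emph{a posteriori} because each sum has a number of terms linear in $n$ and each term, after integration by parts, contributes a fixed multiple; but establishing that without doing the computation is not obviously easier than the computation itself. Your Fourier-symbol induction would in principle work, but it is not clear it saves effort over the paper's direct route. In short: your structure is right, but the paper simply does the calculation you label as ``the main obstacle,'' and that calculation is not as heavy as you anticipate.
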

\begin{remark}
The expression \eqref{structen2} should be compared with the explicit expression we have for the conservation law $E_5(u)$ (namely $n=2$)
given in \eqref{consexpl}.
\end{remark}
\begin{proof}
By using \eqref{induction} one can check the identities:
\begin{equation}\label{lIn}w_j^L(u)=D^{j-1} u,\end{equation}
\begin{equation}\label{cUb}w_{j+1}^C(u)= D w_j^C(u)+ \bar u \sum_{k=1}^{j-1} w_k^L(u) w_{j-k}^L(u).
\end{equation}
By combining \eqref{lIn} with \eqref{En2} we get
that the quadratic part of the energy $E_{2n+1}(u)$
is given by
$$\int \bar u w_{2n+1}^L(u)=\int \bar u D^{2n} u=
\int |\partial^n u|^2.$$
Next we focus on the part of the conservation law $E_{2n+1}(u)$ homogenous of order $4$, which in turn by \eqref{En2} depends
on the cubic part of $w_{2n+1}(u)$, namely $w_{2n+1}^C(u)$. By combining \eqref{lIn} and \eqref{cUb} we get by induction 
the following identity:
\begin{equation*}w_{j+1}^C(u)
= D^{j-2} (\bar u \sum_{k=1}^{1} w_1^L(u) w_{1}^L(u))+\bar u \sum_{k=1}^{j-1} w_k^L(u) w_{j-k}^L(u)+ \sum_{i=1}^{j-3} D^{i} (\bar u \sum_{k=1}^{j-i-1} w_k^L(u) w_{j-i-k}^L(u)).
\end{equation*}
Hence by using \eqref{En2} and integration by parts,  we get the following expression for the part of energy $E_{2n+1}(u)$ with homogeneity four:
\begin{multline*}
\int \bar u w_{2n+1}^C(u) dx=\int D^{2n-2} \bar u \bar u u^2 + \int \bar u \bar u \sum_{k=1}^{2n-1} D^{k-1} uD^{2n-k-1}u
\\+ \sum_{i=1}^{2n-3} (-1)^i\int D^{i} \bar u (\bar u \sum_{k=1}^{2n-i-1} D^{k-1}u D^{2n-i-k-1}u)
= \int D^{2n-2} \bar u \bar u u^2 + 2 \int \bar u \bar u u D^{2n-2} u \\+\int \bar u \bar u (\sum_{k=2}^{2n-2} D^{k-1} uD^{2n-k-1}u)
+ 2 \sum_{i=1}^{2n-3} (-1)^i \int D^{i} \bar u \bar u u D^{2n-i-2}u+ \tilde R_n(u)\end{multline*}
where $\tilde R_n(u)$ is the integral of terms in $ \tilde {\mathcal D}_n$ (the explicit expression $\tilde R_n(u)$ in the following computations can change from line to line)
and we can continue by using integration by parts
\begin{multline*}
\cdots = (-1)^{n-1} \int (D^{n-1} \bar u)^2 u^2
+2 (-1)^{n-1} \int |D^{n-1}u|^2 |u|^2+ 2 (-1)^{n-1} \int (D^{n-1} u)^2 \bar u^2\\+4 (-1)^{n-1} 
\int |D^{n-1}u|^2 |u|^2
+ 2 \sum_{k=2}^{n-1} (-1)^{2n-k-1-n+1}\int (D^{n-1} u)^2 \bar u^2 + \int (D^{n-1} u)^2 \bar u^2
\\+(-1)^{n-1} (4n-6) \int |D^{n-1}u|^2 |u|^2+\int \tilde R_n(u),\end{multline*}
which by the elementary identity
$$2(-1)^{n-1} + 2 \sum_{k=2}^{n-1} (-1)^{n-k}+1=(- 1)^{n-1}$$ implies
\begin{multline*}\cdots = (-1)^{n-1} (-i)^{2n-2} \int \big((\partial ^{n-1} \bar u)^2 u^2 + (\partial^{n-1} u)^2 \bar u^2\big) 
\\+4n (-1)^{n-1}  (-i)^{2n-2} \int |\partial ^{n-1}u|^2 |u|^2+\int \tilde R_n(u)
\\=\int \big((\partial ^{n-1} \bar u) u + (\partial^{n-1} u) \bar u\big)^2
-2    \int |\partial^{n-1} u|^2 |u|^2
+4n \int |\partial^{n-1}u|^2 |u|^2+\int \tilde R_n(u)
\\= \int \big(\partial^{n-1} |u|^2 - \sum_{l=1}^{n-2} \binom{n-1}{l}\partial^l u \partial^{n-1-l} \bar u \big)^2
+(4n-2)  \int |\partial^{n-1} u|^2 |u|^2
+\int \tilde R_n(u).\end{multline*}
We conclude the proof since 
$$\big(\partial^{n-1} |u|^2 - \sum_{l=1}^{n-2} \binom{n-1}{l}\partial^l u \partial^{n-1-l} \bar u \big)^2
- (\partial^{n-1} |u|^2)^2\in \tilde {\mathcal D}_n.$$

\end{proof}
\subsection{Computation and estimate of $\frac d{dt} E_{2n+1}(\Pi_N \Phi_N(t)u)_{t=0}$ for  the leading term}
Recall that the hardest part in  the proof of Theorem \ref{probBBB} for $n=2$ was the proof of \eqref{nahm}, which in turn reduces to the estimate \eqref{lucccc}.
Next we isolate the corresponding leading term in the case $n>2$.
Notice that in order to compute
$\frac d{dt} E_{2n+1}(\Pi_N \Phi_N(t)u)_{t=0}$
the most delicate terms to estimate are \begin{equation}\label{PRIN}
\Re  \int \partial^{n-1} (|\Pi_N u|^2) \partial^{n-1} (\Pi_{>N} (|\Pi_N u|^2 \partial \Pi_N u )
 \Pi_N \bar u), 
 \end{equation}
 and
 \begin{multline}\label{PRIN1}
 \Re  \int |\partial^{n-1} (\Pi_N u)|^2 (\Pi_{>N} (|\Pi_N u|^2 \partial \Pi_N u )
 \Pi_N \bar u)
\\+\Re  \int |\Pi_N u|^2 \partial^{n-1} (\Pi_{>N} (|\Pi_N u|^2 \partial (\Pi_N u) ) \partial^{n-1} (\Pi_N \bar u).
\end{multline}
Concerning the term \eqref{PRIN} 
it can be written as follows 
$$\Re  \int \partial^{n-1} (|\Pi_N u|^2) (\Pi_{>N} (|\Pi_N u|^2 \partial^{n}  \Pi_N u )
 \Pi_N \bar u + l.o.t.$$
 where l.o.t. will denote from now on   integrals of any density involving derivatives of  at most order
 $n-1$. Hence we can continue as follows
 \begin{multline*}\cdots=\Re  \int \partial^{n-1} (|\Pi_N u|^2) \partial^{n-1} \Pi_{>N} (|\Pi_N u|^2 \partial  \Pi_N u )
\Pi_N \bar u + l.o.t.\\
=\Re  \int \partial^{n-1} (|\Pi_N u|^2 \Pi_N \bar u ) \partial^{n-1}  \Pi_{>N} (|\Pi_N u|^2 \partial \Pi_N u )
\\-\Re  \int \partial^{n-1} \Pi_N \bar u |\Pi_N u|^2 \partial^{n-1}  \Pi_{>N} (|\Pi_N u|^2 \partial \Pi_N u )
+ l.o.t.\\
= \Re  \int \partial^{n-1} (|\Pi_N u|^2 \Pi_N \bar u ) \partial^{n-1}  \Pi_{>N} (|\Pi_N u|^2 \partial \Pi_N u )
\\-\Re  \int \partial^{n-1} \Pi_N \bar u |\Pi_N u|^2 \Pi_{>N} (|\Pi_N u|^2 \partial^n \Pi_N u )
+l.o.t.\\
=\Re  \int \partial^{n-1} (|\Pi_N u|^2 \Pi_N \bar u ) \partial \Pi_{>N} (|\Pi_N u|^2 \partial^{n-1}  \Pi_N u )
\\-\Re  \int \partial^{n-1} \Pi_N \bar u |\Pi_N u|^2 \partial^{n-1} \Pi_{>N} (|\Pi_N u|^2 \partial \Pi_N u )
+l.o.t.\\=\Re  \int \partial^{n-1} (|\Pi_N u|^2 \Pi_N \bar u ) \partial^{n-1} \Pi_{>N} (|\Pi_N u|^2 \partial  \Pi_N u )
+l.o.t,
\end{multline*}
where we used the fact that
\begin{equation}\label{IPPn}
\Re  \int (\partial^{n-1} \Pi_N \bar u |\Pi_N u|^2) \partial \Pi_{>N} (|\Pi_N u|^2 \partial^{n-1} \Pi_N u )
=\frac 12 \int \partial \Pi_{>N}(|\Pi_N u|^4 |\partial^{n-1} \Pi_N \bar u|^2)=0.\end{equation}
Next we focus on \eqref{PRIN1}. Arguing as above the expression \eqref{PRIN1} can be written as follows:
 \begin{multline*}
 \Re  \int |\Pi_N u|^2 \partial^{n-1} \Pi_{>N} (|\Pi_N u|^2 \partial \Pi_N u ) \partial^{n-1} \Pi_N \bar u+l.o.t.\\
 = \Re  \int |\Pi_N u|^2  \Pi_{>N} (|\Pi_N u|^2 \partial^n \Pi_N u ) \partial^{n-1} \Pi_N \bar u+l.o.t.\\
 = \Re  \int |\Pi_N u|^2  \partial^{n-1}  \Pi_{>N} (|\Pi_N u|^2 \partial \Pi_N u ) \partial^{n-1} \Pi_N \bar u+l.o.t.=l.o.t.
 \end{multline*}
hence the most dangerous term to be treated is 
 $$\Re  \int \partial^{n-1} (|\Pi_N u|^2 \Pi_N \bar u) \partial^{n-1} \Pi_{>N} (|\Pi_N u|^2 \partial \Pi_N u ).
 $$
 Notice that this term has a structure similar to the one of \eqref{nahm} in the case $n=2$.
 By replacing the random vector 
 \eqref{randomizedgen}
 in the expression above we are reduced to showing the limit
 \begin{equation}\label{luccccgen}
\Big \| \Im  \sum_{\substack{j_1,j_2,j_3,j_4,j_5,j_6\in {\Z_{\leq N}}\\ 
j_1-j_2-j_3+ j_4-j_5+j_6=0\\
|j_4-j_5+j_6|>N}} \frac{(j_1-j_2-j_3)^{n-1}(j_4-j_5+j_6)^{n-1} j_6}{\langle j_1^{2n}\rangle \langle j_2^{2n} \rangle
\langle j_3^{2n} \rangle \langle j_4^{2n}\rangle \langle j_5^{2n}\rangle \langle j_6^{2n} \rangle}g_{\vec j}
(\omega)\Big \|_{L^2_\omega}\overset{N\rightarrow \infty} \longrightarrow 0 \end{equation}
which has the same symmetric structure as \eqref{lucccc} up to the exponents $n-1$ and $2n$, and hence can be treated in the same way {\em mutatis mutandis}.

\end{document}